\documentclass[a4paper,10pt]{amsart}

\usepackage{amsmath}
\usepackage{amssymb}
\usepackage{amsthm}
\usepackage[english]{babel}
\usepackage{hyperref}


 \setlength{\textwidth}{170mm}
 \setlength{\textheight}{230mm} 
 \setlength{\oddsidemargin}{0cm} 
 \setlength{\evensidemargin}{0pt}
 \setlength{\marginparwidth}{2.5cm}
 \hoffset=-0.6truecm \voffset=0truecm

\date{}

\numberwithin{equation}{section}
\newtheorem{defi}{Definition}[section]
\newtheorem*{defi*}{Definition}
\newtheorem{rem}[defi]{Remark}

\newtheorem{prop}[defi]{Proposition}
\newtheorem{cor}[defi]{Corollary}
\newtheorem{lem}[defi]{Lemma}
\newtheorem{thm}[defi]{Theorem}

\newcommand{\ep}{\varepsilon}

\newcommand{\N}{\mathbb{N}}
\newcommand{\Z}{\mathbb{Z}}
\newcommand{\R}{\mathbb{R}}
\newcommand{\C}{\mathbb{C}}

\newcommand{\mf}{\mathcal{F}}

\renewcommand{\phi}{\varphi}
\renewcommand{\epsilon}{\varepsilon}

\newcommand{\Real}{\operatorname{Re}}
\newcommand{\im}{\operatorname{Im}}

\newcommand{\wa}{\alpha}
\newcommand{\betaw}{\beta}

\newcommand{\beq}{\begin{equation}}
\newcommand{\eeq}{\end{equation}}
\newcommand{\RE}{\mathbb{R}}
\newcommand{\CO}{\mathbb{C}}
\newcommand{\NN}{\mathbb{N}}
\renewcommand{\Im}{\operatorname{Im}}
\renewcommand{\Re}{\operatorname{Re}}
\newcommand{\f}{\frac}
\newcommand{\al}{\alpha}

\renewcommand{\ln}{\log}

\begin{document}

\title{The Riemann zeta in terms of the dilogarithm}

\author[S. Albeverio]{Sergio Albeverio}
\address{
Institut f\"{u}r Angewandte Mathematik,
Universit\"{a}t Bonn,
Endenicher Allee 60,
53115 Bonn, Germany;
SFB611, Bonn; HCM, Bonn; BIBOS, Bielefeld
and Bonn; IZKS, Bonn; CERFIM (Locarno);
Acc. Arch., USI (Mendrisio); Dip. Matematica Universit\`a di Trento
(Italy)}
\email{albeverio@uni-bonn.de}

\author[C. Cacciapuoti]{Claudio Cacciapuoti}
\address{
Hausdorff Center for Mathematics, Institut f\"{u}r Angewandte Mathematik,
Universit\"{a}t Bonn, Endenicher Allee 60,
53115 Bonn, Germany;
Graduate School of Science,
Mathematical Institute, Tohoku University, 
 6-3 Aoba, Aramaki, Aoba,
Sendai, Miyagi 980-8578, Japan
}
\email{cacciapuoti@hcm.uni-bonn.de}

\thanks{This paper is part of a larger research program. Previous collaborations and discussions 
with Zdzis{\l}aw Brze\'zniak, Christof
Cebulla, Benedetta Ferrario, Andrej Madrecki, Danilo Merlini, Lev Pustylnikov, Walter R.
Schneider and Boguslav Zegarli\'nski on topics related to the present
work helped us in understanding some of the underlying problems
and are gratefully acknowledged. S.A. is also 
grateful to Christoph Berns and
Masha Gordina for a careful reading of versions of a previous manuscript,
pointing out several necessary improvements, and making many very
useful comments for improving it. 
We are also very grateful to an anonymous referee for very careful reading of the paper, for many suggestions and much constructive criticism, which also greatly helped to improve the presentation of the results.\\
The warm hospitality and support of the Department of
Mathematics, University of Trento and of C.I.R.M., 
the Scuola Normale Superiore and the Centro de Giorgi in Pisa,
are also
gratefully acknowledged. One of the authors, C.C., is thankful to the
Hausdorff Research Institute for Mathematics and to the JSPS (Japan
Society for the
Promotion of Science) for financial support. 
We also thank Dominik Gunkel for his skillful help in type setting the manuscript.
}

\keywords{
Classical Riemann's zeta function, dilogarithm function, Mellin transform,
meromorphic character of $\zeta$, zero-free regions of the zeta function, bounds on the  zeta function,
integral representations of the zeta function, series of zeta functions,
series of gamma functions,
generalized symmetrized M\"untz formula.
}

\subjclass[2010]{Primary  11M26, 11M06, 11F27; Secondary 42A38}

\begin{abstract}
We give a representation of the classical Riemann $\zeta$-function in the half
plane $\Re s>0$ in terms of a Mellin transform involving the real part of the
dilogarithm function with an argument on the unit circle (associated Clausen
$Gl_2$-function). We also derive corresponding representations involving the
derivatives  of the $Gl_2$-function. A generalized symmetrized M\"untz-type formula is also derived. For  a special choice of test functions it connects to our integral representation of the $\zeta$-function, providing also a computation of a concrete Mellin transform. Certain formulae involving series of zeta functions and gamma functions are also derived.  
\end{abstract}

\maketitle

\section{Introduction}
\label{sec1}

The classical Riemann zeta function $\zeta$ is one of the
most intriguing and central objects of mathematics. Both the location and
distribution of its zeros have deep connections with the distribution of
prime numbers. The set  of zeros of $\zeta$ consists exclusively   of the
strictly negative even integers (``trivial zeros'') and, according to
the  as of 
yet still unproven Riemann's hypothesis \cite{ri}, of a  countable number of points on
the critical line $\Real s=\frac12$ in the complex $s$-plane (that there
are indeed infinitely many zeros on the critical line is a classical
result of Hardy, see, e.g., \cite{Edw74}). There are many connections of Riemann's zeta function with areas of mathematics and its applications, as well as with other conjectures, see
\cite{ri} for the original work by B. Riemann (and also, e.g., \cite{BCRW, Edw74} for comments resp. further historical comments) and \cite{ivi,
  KV92, pat, tit} for basic specific books on the Riemann's zeta function, as
well as the survey papers  \cite{bomb, con2}. For connections with other
problems in analytic number theory see, e.g., \cite{bru, jam}, for relations
with new developments in random matrix theory and other areas of mathematics
see, e.g., \cite{cm07a, con2, con, lap, RSdmj8196}, for numerical results and
other relations to methods inspired by physics see \cite{bms08}, and references therein. Some results establishing ``zero-free regions'' in 
$\left\{\Real s\neq\frac12\right\}\backslash\{-2\N\}$
are known, see, e.g., \cite{albce, Fo02, jam, KV92, tit, frank}, and references therein. These, in turn, are related with estimates on the remainder in the classical prime number theorem see, e.g., \cite{jam, KV92}.

The dilogarithm function $Li_{2}(z)$, $z\in\C$, was introduced by Euler
in $1768$ by its series representation $\sum_{n=1}^\infty\frac{z^n}{n^2}$ (for
$|z|\leqslant1$) and further studied, e.g. by Hill in 1828 see, e.g.,
\cite{lew, lew81}. It is a member of a family of polylogarithm functions which have  many connections with other areas of mathematics, like number theory, differential and algebraic geometry, but also with physics (quantum electrodynamics, scattering theory),  and engineerings (signal analysis, electrical network theory,  network problems, waveguide theory).
An important function associated with $Li_2$ is Clausen's $Gl_2$-function defined by 
\[
\Real Li_2(e^{i\theta})=Gl_2(\theta)=\sum_{n=1}^{\infty}\frac{\cos n\theta}{n^2}.
\] 
It is given in $[0,2\pi]$ by $\frac{\pi^2}{6}-\frac{\theta(2\pi-\theta)}{4}$.
In the present paper we relate $Gl_2(\theta)$, which we call for simplicity
$p(\theta)$, to the $\zeta$-function, via a Mellin transform. 
More precisely,
we prove a representation of $\zeta(s)$ for $\Re s>0$ in terms of $p(\theta)$
of the form
\[
\zeta(s)=\frac{ 2s(1+s)}{(2\pi)^{1-s}}
\left\{
\frac{\pi^2}{6}\frac1{1+s}-\frac{\pi}{2}\frac1s
-\frac14 \frac1{1-s}-D(-2-s)\right\}\,,
\]
where $D(\alpha)$ is defined for $\Re\alpha<-2$ as
$D(\alpha)\equiv\int_1^{+\infty}y^{\alpha}p(y)dy$ (see Theorem
\ref{t.3.1}) and is the Mellin transform at
$\alpha+1$ of the function $p_0(y)=\chi_{[1,\infty)}(y)p(y)$
($\chi_A$ denoting the characteristic function of the set $A$).  This provides a, to the best of our knowledge, new integral representation of
 $\zeta(s)$ for $\Re s>0$. Furthermore we
provide a series representation for the function $D(\alpha)$ (see Remark \ref{r.2.4}). We note that our
representation of the $\zeta$-function differs in several ways from other known
representations, like those given in \cite{bomb, Edw74, ri, RSdmj8196, srich}
(see also \cite[Sect. 3]{Ten}). As corollaries we obtain a new proof that $\zeta$ is a meromorphic function in $\Real s>0$, provide new proofs of certain
results on zero-free regions for the $\zeta$-function and bounds of it inside
the critical strip. 
We also derive other integral representations of $\zeta$ in terms of derivatives
of the function $p$ (Section 4).

Our integral representation of the $\zeta$-function also yields an explicit
formula for the Mellin transform of the Fourier transform of the test function
$\varphi(x)=(1-|x|)\chi_{[-1,1]}(x)$ (see Remark \ref{r.5.11}). We also get an explicit formula  for the Mellin transform of the function
$\frac{1-\cos(2\pi x)}{2\pi^2x^2}\chi_{[0,1]}(x)$  in terms of
$\Gamma$- functions. The summation of
certain series involving factorial factors (see Section \ref{sec5})
resp. the zeta functions at equally spaced arguments is also performed as an application of our integral representation (see Appendix \ref{sec6}).

The structure of the present paper is as follows. In Section \ref{sec2} after the introduction of 
$p(\theta)=Gl_2(\theta)$ we prove a lemma giving a representation of 
$
D_N(\alpha)\equiv\int_1^{2\pi N}y^{\alpha}p(y)dy
$
for $\Real \alpha<-1,\ N\in\N$ (incomplete Mellin transform of $p$). We then prove that 
$ \lim_{N\to\infty}D_N(\alpha)\equiv D(\alpha)$ exists and can be expressed by $\zeta(-2-\alpha)$ for $\Re \alpha<-2$. Then expressions for $D(\alpha)$ as series involving the gamma function are given. In Section \ref{sec3} the representation of $\zeta$ in terms of $D$ is used in particular to derive in a simple way a zero-free region for $\zeta$, close to the real line  (cf. Remark \ref{r.3.3}). In Section \ref{sec4} we derive two  other representations of $\zeta$ in terms of integrals involving, instead of $p$,
its distributional first derivative (Proposition \ref{prop-tildeE}) resp. a piecewise constant function (Proposition \ref{prop-F}).
In particular they lead to upper bounds on
$|\zeta(s)|$, for $\Real s>0$.
For a comparison with other bounds obtained essentially by trigonometric sums methods see Remark \ref{rem.4.4} below.

In Section \ref{sec5} we derive a generalized M\"untz formula for $\zeta$,
relative to general ``test functions'' which are such that they as well as
their Fourier  transforms are in $L^1(\R)$ (for more restrictive choices of  $f$ the formula was originally proven in \cite{mun},
see also \cite{bur}, and Remark \ref{rem22} below). We also prove,  exploiting
a Poisson summation formula, a symmetrized version of our generalized M\"untz
formula, somewhat related to the one discussed in \cite{albce}, and which
might be of interest in itself. For the special choice $f=\phi$ (with $\phi$ as above)these formulae
yield an explicit  computation of the Mellin transform of the Fourier transform of $f$ in terms of $\zeta$. 

In Appendix \ref{sec6} we provide another derivation for the basic function $D$ of Section \ref{sec2}. This derivation involves the computation of certain series $\tilde A(\alpha)$ (simply related to $D(\alpha)$) in terms of series involving the $\zeta$-functions taken at equally spaced arguments (cf. Lemma \ref{prop6.2}). The latter in turn are expressed in Corollary \ref{cor.6.5} in simple terms and a zeta function at a single point. These relations might have an interest in themselves (in any case we were not able to locate them in the extensive survey  on series involving the zeta function presented  in \cite{srich}).

\section{An integral involving the associated Clausen $Gl_2$-function}
\label{sec2}

Let $p$ be the $2\pi$-periodic real-valued function on $\R$ given for $\theta\in[0,2\pi]$ by 
\begin{align}\label{e.2.1}
p(\theta)=\frac{\pi^2}{6}-\frac{\theta(2\pi-\theta)}{4}.
\end{align}

The function $p(\theta)$
is denoted by $Gl_2(\theta)$ in \cite[p. 181]{lew} and is called the associated 
Clausen function (of order 2). It is also given by
\beq
\label{e.2.2}
p(\theta)=\sum_{n=1}^{+\infty}\frac{\cos (n\theta)}{n^2},\qquad\theta\in\R,
\eeq
see, e.g., \cite[p. 242]{lew}.
One has
\beq
\label{e.2.3}
p(\theta)=\pi^2 B_2\Big(\frac{\theta}{2\pi}\Big),\qquad \theta\in[0,2\pi],
\eeq
where $B_2(x)$ is the second Bernoulli polynomial,
defined as the coefficient of $t^2/2!$
in the expansion of $t e^{tx}/(e^{t}-1)$ in powers of $t$, i.e.
\begin{align}
B_2(x)=x^2-x+\frac16
\end{align}
(cf., e.g., \cite[p. 361]{erd}, and \cite[p.186]{lew}).

$p$ coincides with the real part of Euler's (1768) dilogarithm function
$Li_2(\cdot)$ with an argument on the unit circle, i.e.
\begin{align}
p(\theta)=\Real Li_2(e^{i\theta}), \qquad \theta\in[0,2\pi],
\end{align}
$Li_2(z)$ being defined for $|z|\leqslant1$ by
\begin{align}
Li_2(z)\equiv\sum_{n=1}^{+\infty}\frac{z^n}{n^2}
\end{align}
(see, e.g.,  \cite{lew}, \cite{lew81} and   \cite[p. 106]{tat}).

$p$ is bounded continuous on $\R$ (with $\max
p(\theta)=\max|p(\theta)|=\pi^2/6,\ 
\min p(\theta)=-\pi^2/12$). All its derivatives exist and are
continuous except for the points $2\pi k,\ k\in\Z$
(where they have to be defined, e.g., in the distributional sense).
The function $y\to y^{\alpha}p(y)$ is in $L^1([1,+\infty))$ (with respect to Lebesgue's measure
on $[1,+\infty)$), for any $\alpha\in\C$ with $\Real\alpha<-1$ 
(since $p$ is bounded).

One has the integral representation
\[
p(x)=\Real Li_2(e^{ix})
=-\frac{1}{2}\int_0^1\frac{1}{y}\log(1-2y\cos x+y^2)dy,
\]
see, e.g., \cite[p. 106]{lew}.

We shall study the function
\begin{align}\label{e.2.7}
D(\alpha)\equiv\int_1^{+\infty}y^{\alpha}p(y)dy,
\end{align} 
relating it, in Section \ref{sec3}, to the classical Riemann zeta function $\zeta$ at 
$(-2-\alpha),\ \Real\alpha<-2$
(i.e. $\zeta(s)$ with $\Real s>0$).

\begin{lem}
\label{lem-DNal}
For any integer $N\in\NN$ and $\Re \al<-1$,  let $D_N(\alpha)$ be defined by
\beq
\label{DNalpha}
D_N(\alpha) \equiv \int_1^{2\pi N}y^{\alpha}p(y)dy,
\eeq
then
\beq
\label{res-DNal}
\begin{aligned}
D_N(\alpha)=&-\frac{\pi^2}{6}\frac1{\alpha+1}
+\frac{\pi}{2}\frac1{\alpha+2}
-\frac14\frac1{\alpha+3}-\frac{(2\pi)^{\al+3}}{2(\al+2)(\al+1)}\zeta_{N}(-2-\al)\\
&+\frac{\pi^2}{6}\frac{(2\pi N)^{\al+1}}{\al+1}
+\frac{\pi}{2}\frac{(2\pi N)^{\al+2}}{(\al+2)(\al+1)}\\
&+\frac{1}{2}\frac{(2\pi N)^{\al+3}}{(\al+3)(\al+2)(\al+1)},
\end{aligned}
\eeq
with
\beq
\label{zetaN}
\zeta_{N}(s) \equiv \sum_{n=1}^{N}\frac1{n^s}.
\eeq
\end{lem}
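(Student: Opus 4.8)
The plan is to split the integral $D_N(\alpha)=\int_1^{2\pi N}y^\alpha p(y)\,dy$ at the period points $2\pi k$ and use the explicit polynomial formula \eqref{e.2.1} for $p$ on each piece. Since $p$ is $2\pi$-periodic, on the interval $[2\pi(k-1),2\pi k]$ we have $p(y)=p(y-2\pi(k-1))$, so writing $y=2\pi(k-1)+\theta$ with $\theta\in[0,2\pi]$ reduces every piece to an integral of $\theta\mapsto(2\pi(k-1)+\theta)^\alpha\bigl(\tfrac{\pi^2}{6}-\tfrac{\theta(2\pi-\theta)}{4}\bigr)$. That substitution, however, produces an $\alpha$-dependent prefactor that does not telescope cleanly, so instead I would integrate by parts directly on $[1,2\pi N]$: writing $P(y)$ for a periodic antiderivative structure is awkward because $p$ itself is only piecewise polynomial, so the cleanest route is to integrate $y^\alpha$ against the piecewise-polynomial $p$ by parts three times, moving derivatives off $y^{\alpha+1}/(\alpha+1)$, etc., onto $p$, and carefully collecting the boundary terms at each $2\pi k$.

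Concretely, on each subinterval $[2\pi(k-1),2\pi k]$ (and the initial fragment $[1,2\pi]$), $p(y)=\tfrac{\pi^2}{6}-\tfrac{(y-2\pi(k-1))(2\pi k-y)}{4}$, whose first derivative is piecewise linear and second derivative is the constant $\tfrac12$ on each piece (with jumps of size $\pi$ in $p'$ at the lattice points, since $p'(2\pi k^-)=-\pi$ and $p'(2\pi k^+)=\pi$). Integrating by parts three times turns $\int y^\alpha p(y)\,dy$ into boundary terms involving $y^{\alpha+1}p$, $y^{\alpha+2}p'$, $y^{\alpha+3}p''$ together with a final term $\int y^{\alpha+3}\cdot 0\,dy$ — since $p'''\equiv0$ on each piece — but the jumps in $p'$ at the interior nodes $2\pi k$ contribute extra terms. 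Summing these node contributions over $k=1,\dots,N-1$ produces a sum $\sum_{k=1}^{N-1}(2\pi k)^{\alpha+2}$ times a constant, and after re-indexing and including the $y=2\pi N$ endpoint this is exactly $(2\pi)^{\alpha+2}\zeta_N(-2-\alpha)$ up to the constant; combining with the $(\alpha+1)$ from a further integration step gives the stated coefficient $-\tfrac{(2\pi)^{\alpha+3}}{2(\alpha+2)(\alpha+1)}$.

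The remaining boundary terms come only from the two genuine endpoints $y=1$ and $y=2\pi N$: at $y=1$ one uses $p(1)=\tfrac{\pi^2}{6}-\tfrac{2\pi-1}{4}$, $p'(1)=\tfrac{1}{2}-\pi+\pi=\ldots$ — in fact the values of $p,p',p''$ at $1$ assemble (after the three integrations by parts, with denominators $\alpha+1$, $(\alpha+2)(\alpha+1)$, $(\alpha+3)(\alpha+2)(\alpha+1)$) into precisely $-\tfrac{\pi^2}{6}\tfrac1{\alpha+1}+\tfrac{\pi}{2}\tfrac1{\alpha+2}-\tfrac14\tfrac1{\alpha+3}$; at $y=2\pi N$ one uses $p(2\pi N)=p(0)=\tfrac{\pi^2}{6}$, $p'(2\pi N^-)=-\pi$, $p''=\tfrac12$, giving the three terms on the second and third lines of \eqref{res-DNal}. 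So the whole proof is a bookkeeping exercise: carry out three integrations by parts, track the jump of $p'$ at each $2\pi k$, re-index to recognise the partial zeta sum, and verify the endpoint values at $1$ and $2\pi N$. I expect the main obstacle to be purely organizational — keeping the signs of the $p'$ jumps and the contributions from the fragmentary first interval $[1,2\pi]$ straight so that the constant $\tfrac{\pi^2}{6}-\tfrac{2\pi-1}{4}$-type terms at $y=1$ combine into the clean rational expression claimed, rather than any analytic difficulty, since $\Re\alpha<-1$ only needs to be invoked to ensure all the finite integrals and boundary evaluations make sense (which they automatically do on the bounded domain $[1,2\pi N]$).
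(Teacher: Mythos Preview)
Your strategy---three integrations by parts exploiting $p'''\equiv 0$ on each period, so the whole integral collapses to boundary terms plus interior jump contributions of $p'$---is correct and is genuinely different from the paper's proof. The paper does not integrate by parts at all: it splits $[1,2\pi N)$ into $[1,2\pi)\cup\bigcup_{k=1}^{N-1}[2\pi k,2\pi(k+1))$, expands $p$ as a quadratic in $y$ on each piece, integrates $y^{\alpha}$, $y^{\alpha+1}$, $y^{\alpha+2}$ directly, and then performs a fairly lengthy algebraic reduction of sums such as $\sum_{k=1}^{N-1}k\,[(k+1)^{\alpha+j}-k^{\alpha+j}]$ and $\sum_{k=1}^{N-1}k^{2}[(k+1)^{\alpha+1}-k^{\alpha+1}]$ to recognise $\zeta_N(-\alpha-2)$. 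Your route is more structural: the $\zeta_N$ term emerges in one step as the sum of the $p'$-jumps, and the endpoint terms drop out of the antiderivative $F(y)=\tfrac{y^{\alpha+1}}{\alpha+1}p-\tfrac{y^{\alpha+2}}{(\alpha+1)(\alpha+2)}p'+\tfrac{y^{\alpha+3}}{(\alpha+1)(\alpha+2)(\alpha+3)}p''$. The paper's approach is more pedestrian but requires no careful tracking of one-sided limits; yours is cleaner conceptually but demands exactly the bookkeeping you flag.

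Two concrete slips to fix when you write it up. First, from $p(\theta)=\tfrac{\pi^2}{6}-\tfrac{\pi}{2}\theta+\tfrac14\theta^2$ one gets $p'(\theta)=-\tfrac{\pi}{2}+\tfrac{\theta}{2}$, so $p'(2\pi k^{-})=+\tfrac{\pi}{2}$ and $p'(2\pi k^{+})=-\tfrac{\pi}{2}$; the jump is indeed of size $\pi$, but your stated values $\mp\pi$ are off by a factor of $2$. Second, because $p'(2\pi N^{-})=+\tfrac{\pi}{2}$ (not $-\pi$), the raw endpoint $F(2\pi N^{-})$ does \emph{not} directly give the second-line term $+\tfrac{\pi}{2}\tfrac{(2\pi N)^{\alpha+2}}{(\alpha+1)(\alpha+2)}$ with the right sign; you must move the would-be ``$k=N$ jump'' term $-\tfrac{\pi(2\pi N)^{\alpha+2}}{(\alpha+1)(\alpha+2)}$ from the endpoint into the jump sum, which simultaneously flips that sign and extends $\sum_{k=1}^{N-1}$ to $\sum_{k=1}^{N}=\zeta_N(-\alpha-2)$. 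Once those two points are straightened out, the $y=1$ contribution $-F(1)$ does collapse (via partial fractions) to $-\tfrac{\pi^2}{6}\tfrac{1}{\alpha+1}+\tfrac{\pi}{2}\tfrac{1}{\alpha+2}-\tfrac14\tfrac{1}{\alpha+3}$ as you claim, and the identity follows.
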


\begin{proof}
Splitting the integration domain $[1,2N\pi)$ in \eqref{DNalpha} into
\[
[1,2\pi)\cup[2\pi,2N\pi)=[1,2\pi)\cup\bigcup_{k=1}^{N-1}[2\pi k,2\pi (k+1))
\]
we get
\beq
\label{splitDNalpha}
\int_1^{2N\pi}y^{\wa}p(y)dy
=\int_1^{2\pi}y^{\wa}p(y)dy
+\sum_{k=1}^{N-1}\int_{2\pi k}^{2\pi(k+1)}y^{\wa}p(y)dy.
\eeq
Set
\beq
\label{Ialpha}
I^{\alpha}\equiv\int_1^{2\pi}y^{\wa}p(y)dy.
\eeq
Then, by the definition \eqref{e.2.1} of $p$ in $[0,2\pi]$:
\beq
\begin{aligned}
\label{Ialpha-2}
I^{\alpha}&=\int_1^{2\pi}y^{\wa}\Big(\frac{\pi^2}{6}-\frac{\pi}{2}y+\frac14y^2\Big)dy
=\left.\frac{\pi^2}{6}\frac{y^{\wa+1}}{\wa+1}\right|_1^{2\pi}
-\left.\frac{\pi}{2}\frac{y^{\wa+2}}{\wa+2}\right|_1^{2\pi}
+\left.\frac{1}{4}\frac{y^{\wa+3}}{\wa+3}\right|_1^{2\pi}\\
&=\frac{\pi^2}{6}\frac1{\wa+1}((2\pi)^{\wa+1}-1)
-\frac{\pi}{2}\frac1{\wa+2}((2\pi)^{\wa+2}-1)
+\frac{1}{4}\frac1{\wa+3}((2\pi)^{\wa+3}-1).
\end{aligned}
\eeq
Set
\beq
\label{IINalpha}
II^{\alpha}_N\equiv\sum_{k=1}^{N-1}\int_{2\pi k}^{2\pi(k+1)}y^{\wa}p(y)dy,
\eeq
so that by \eqref{splitDNalpha}, \eqref{Ialpha} and  \eqref{IINalpha}:
\beq
\label{DNalI-II}
D_N(\alpha)=\int_1^{2\pi N}y^{\wa}p(y)dy=I^{\alpha}+II^{\alpha}_N.
\eeq
Then,  by the definition \eqref{e.2.1} of $p$ in $[2\pi k,2\pi (k+1))$:
\[
\begin{aligned}
\int_{2\pi k}^{2\pi(k+1)}y^{\wa}p(y)dy=&
\int_{2\pi k}^{2\pi (k+1)}y^{\al}\Big[\frac{\pi^2}{6}-\frac{\pi}{2}(y-2\pi k)+\frac14(y-2\pi k)^2\Big]dy\\
=&
\int_{2\pi k}^{2\pi (k+1)} 
\Big[\frac{\pi^2}{6}+\frac{\pi}{2}(2\pi k)+\frac14(2\pi k)^2\Big] y^{\al}
-
\Big[\frac{\pi}{2}+\frac12(2\pi k)\Big]y^{\al+1}
+
\frac14 y^{\al+2}
dy\\
=&
\Big[\frac{\pi^2}{6}+\frac{\pi}{2}(2\pi k)+\frac14(2\pi k)^2\Big]\frac{1}{\al+1}[(2\pi (k+1))^{\al+1}-(2\pi k)^{\al+1}]\\
&-
\Big[\frac{\pi}{2}+\frac12(2\pi k)\Big]\frac{1}{\al+2}[(2\pi (k+1))^{\al+2}-(2\pi k)^{\al+2}]\\
&+
\frac14 \frac{1}{\al+3} [(2\pi (k+1))^{\al+3}-(2\pi k)^{\al+3}].
\end{aligned}
\]
By the definition \eqref{IINalpha} of $II^\al_N$ and rearranging the terms in the latter equality we then get
\[
\begin{aligned}
II^{\alpha}_N=\sum_{k=1}^{N-1}
\bigg[ & \frac{\pi^2}{6} \frac{1}{\al+1} [(2\pi (k+1))^{\al+1}-(2\pi k)^{\al+1}]
 -\frac{\pi}{2} \frac{1}{\al+2} [(2\pi (k+1))^{\al+2}-(2\pi k)^{\al+2}]\\
+&\frac14  \frac{1}{\al+3}  [(2\pi (k+1))^{\al+3}-(2\pi k)^{\al+3}]\\
+&\frac{\pi}{2}(2\pi k) \frac{1}{\al+1} [(2\pi (k+1))^{\al+1}-(2\pi k)^{\al+1}]
-\frac12(2\pi k) \frac{1}{\al+2} [(2\pi (k+1))^{\al+2}-(2\pi k)^{\al+2}]\\
+&\frac14(2\pi k)^2\frac{1}{\al+1}[(2\pi (k+1))^{\al+1}-(2\pi k)^{\al+1}]
\bigg].
\end{aligned}
\]
We use the equalities
\[
\sum_{k=1}^{N-1}[(k+1)^{\al+j}-k^{\al+j}]=
\sum_{k=1}^{N-1}(k+1)^{\al+j}-\sum_{k=1}^{N-1}k^{\al+j}=
N^{\al+j}-1\qquad j=1,2,3,
\]
and obtain
\beq
\label{temple}
\begin{aligned}
II^{\alpha}_N=&
\frac{\pi^2}{6} \frac{(2\pi)^{\al+1}}{\al+1} [N^{\al+1}-1]
 -\frac{\pi}{2} \frac{(2\pi)^{\al+2}}{\al+2} [N^{\al+2}-1]
+\frac14  \frac{(2\pi)^{\al+3}}{\al+3}  [N^{\al+3}-1]\\
&+\sum_{k=1}^{N-1}
\bigg[\frac{\pi}{2}(2\pi k) \frac{1}{\al+1} [(2\pi (k+1))^{\al+1}-(2\pi k)^{\al+1}]
-\frac12(2\pi k) \frac{1}{\al+2} [(2\pi (k+1))^{\al+2}-(2\pi k)^{\al+2}]\\
&+\frac14(2\pi k)^2\frac{1}{\al+1}[(2\pi (k+1))^{\al+1}-(2\pi k)^{\al+1}]
\bigg].
\end{aligned}
\eeq
We notice that 
\beq
\label{sumk}
\begin{aligned}
\sum_{k=1}^{N-1}
k [(k+1)^{\al+j}-k^{\al+j}]=&
\sum_{k=1}^{N-1} k (k+1)^{\al+j}
-\sum_{k=1}^{N-1} k k^{\al+j}\\
=&
-\sum_{k=1}^{N-1}(k+1)^{\al+j}
+\sum_{k=1}^{N-1}(k+1)^{\al+j+1}
-\sum_{k=1}^{N-1} k^{\al+j+1}\\
=&
1-\zeta_N(-\al-j)+N^{\al+j+1}-1
=-\zeta_N(-\al-j)+N^{\al+j+1}\,,
\end{aligned}
\qquad j=1,2
\eeq
where $\zeta_N(-\al-j)$ is the function defined in \eqref{zetaN}, and we used the equality $\sum_{k=1}^{N-1}(k+1)^{\al+j}=\sum_{m=2}^{N}m^{\al+j}=-1+\zeta_N(-\al-j)$. We also notice the equality
\beq
\label{sumk2}
\begin{aligned}
\sum_{k=1}^{N-1}
k^2 [(k+1)^{\al+1}-k^{\al+1}]=&
\sum_{k=1}^{N-1} (k+1)^{\al+1}
-2\sum_{k=1}^{N-1} (k+1)^{\al+2}
+ \sum_{k=1}^{N-1}  (k+1)^{\al+3} -\sum_{k=1}^{N-1} k^{\al+3}\\
=&-1+\zeta_N(-\al-1)+2-2\zeta_N(-\al-2)+N^{\al+3}-1\\
=&\zeta_N(-\al-1)-2\zeta_N(-\al-2)+N^{\al+3}.
\end{aligned}
\eeq
Using equalities \eqref{sumk} and \eqref{sumk2} in \eqref{temple} we have
\beq
\label{IIalN}
\begin{aligned}
II^{\alpha}_N=&
\frac{\pi^2}{6} \frac{(2\pi)^{\al+1}}{\al+1} [N^{\al+1}-1]
 -\frac{\pi}{2} \frac{(2\pi)^{\al+2}}{\al+2} [N^{\al+2}-1]
+\frac14  \frac{(2\pi)^{\al+3}}{\al+3}  [N^{\al+3}-1]\\
&+
\frac{\pi}{2} \frac{(2\pi)^{\al+2}}{\al+1}  [-\zeta_N(-\al-1)+N^{\al+2}]
-\frac12  \frac{(2\pi)^{\al+3}}{\al+2} [-\zeta_N(-\al-2)+N^{\al+3}]\\
&+\frac14 \frac{(2\pi)^{\al+3}}{\al+1}[\zeta_N(-\al-1)-2\zeta_N(-\al-2)+N^{\al+3}]\\
=&
-\frac{\pi^2}{6} \frac{(2\pi)^{\al+1}}{\al+1} +
\frac{\pi}{2} \frac{(2\pi)^{\al+2}}{\al+2} -\frac14  \frac{(2\pi)^{\al+3}}{\al+3}
+\frac{(2\pi)^{\al+3}}2 \Big[ \frac{1}{\al+2}-\frac{1}{\al+1}\Big]
\zeta_N(-\al-2)\\
&+\frac{\pi^2}{6} \frac{(2\pi)^{\al+1}}{\al+1} N^{\al+1}+
(2\pi)^{\al+2}\frac{\pi}{2} \Big[\frac{1}{\al+1}  -\frac{1}{\al+2}\Big]N^{\al+2}\\
&+(2\pi)^{\al+3}\frac12\Big[  \frac{1}{2(\al+3)} 
-\frac{1}{\al+2} +\frac{1}{2(\al+1)}\Big]N^{\al+3}.
\end{aligned}
\eeq
Using the latter equality and the formula \eqref{Ialpha-2} for $I^\al$  in \eqref{DNalI-II} we get
\[
\begin{aligned}
D_N(\alpha)=&
-\frac{\pi^2}{6}\frac1{\wa+1}
+\frac{\pi}{2}\frac1{\wa+2}
-\frac{1}{4}\frac1{\wa+3}\\
&
+\frac{(2\pi)^{\al+3}}2 \Big[ \frac{1}{\al+2}-\frac{1}{\al+1}\Big]
\zeta_N(-\al-2)\\
&+\frac{\pi^2}{6} \frac{(2\pi)^{\al+1}}{\al+1} N^{\al+1}+
(2\pi)^{\al+2}\frac{\pi}{2} \Big[\frac{1}{\al+1}  -\frac{1}{\al+2}\Big]N^{\al+2}\\
&+(2\pi)^{\al+3}\frac12\Big[  \frac{1}{2(\al+3)} 
-\frac{1}{\al+2} +\frac{1}{2(\al+1)}\Big]N^{\al+3},
\end{aligned}
\]
which is  the equality \eqref{res-DNal}.
\end{proof}

We observe that \eqref{res-DNal} can be written as 
\[
\begin{aligned}
D_N(\alpha)=&-\frac{\pi^2}{6}\frac1{\alpha+1}
+\frac{\pi}{2}\frac1{\alpha+2}
-\frac14\frac1{\alpha+3}-
\frac{(2\pi)^{\al+3}}{2(\al+2)(\al+1)} A_N(-2-\al)
+\frac{\pi^2}{6}\frac{(2\pi N)^{\al+1}}{\al+1}
\end{aligned}
\]
with
\beq
\label{ANs}
\begin{aligned}
A_N(s)
\equiv&
\zeta_{N}(s)
-\frac{1}{2}\frac{1}{N^s}
-\frac{1}{1-s}\frac{1}{N^{s-1}}\\
=&\sum_{n=1}^N\frac{1}{n^s}
-\frac{N^{1-s}}{1-s}
-\frac{N^{-s}}{2}
\end{aligned}
\eeq
This will be used in the proof of the following:
\begin{lem}
\label{lem-limDNal}
Let $D_N(\alpha)$ be defined as in Lemma \ref{lem-DNal}, then for $\Re \al<-2$ 
\[
\lim_{N\to+\infty}D_N(\al)=
-\frac{\pi^2}{6}\frac1{\alpha+1}
+\frac{\pi}{2}\frac1{\alpha+2}
-\frac14\frac1{\alpha+3}-\frac{(2\pi)^{\al+3}}{2(\al+2)(\al+1)}\zeta(-2-\al).
\] 
\end{lem}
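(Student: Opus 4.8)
The plan is to work from the reformulation of \eqref{res-DNal} established above, namely
\[
D_N(\alpha)=-\frac{\pi^2}{6}\frac1{\alpha+1}
+\frac{\pi}{2}\frac1{\alpha+2}
-\frac14\frac1{\alpha+3}
-\frac{(2\pi)^{\alpha+3}}{2(\alpha+2)(\alpha+1)}\,A_N(-2-\alpha)
+\frac{\pi^2}{6}\frac{(2\pi N)^{\alpha+1}}{\alpha+1},
\]
with $A_N$ as in \eqref{ANs}. All the $N$-dependence is now confined to the last two summands, so for $\Re\alpha<-2$ the assertion is equivalent to the two limits $(2\pi N)^{\alpha+1}\to0$ and $A_N(-2-\alpha)\to\zeta(-2-\alpha)$ as $N\to+\infty$. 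One may assume $\alpha\neq-3$ in what follows; the value $\alpha=-3$ is recovered by letting $\alpha\to-3$, the apparent singularities of $-\frac14\frac1{\alpha+3}$ and of $\zeta(-2-\alpha)$ cancelling, so that both sides stay holomorphic there.

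The first limit is immediate: $\Re\alpha<-2$ gives $\Re(\alpha+1)<-1<0$, so $|(2\pi N)^{\alpha+1}|=(2\pi N)^{\Re(\alpha+1)}\to0$. The substance is the second limit. Put $s=-2-\alpha$, so that $\Re s>0$. If $\Re s>1$ (i.e.\ $\Re\alpha<-3$) it is trivial, since then $\zeta_N(s)\to\zeta(s)$ and $N^{-s},N^{1-s}\to0$. For $0<\Re s\le1$ I would invoke the classical Euler--Maclaurin (equivalently Abel-summation) representation of the analytically continued zeta function: for $\Re s>0$, $s\neq1$,
\[
\zeta(s)=\sum_{n=1}^N\frac1{n^s}-\frac{N^{1-s}}{1-s}-\frac12\,N^{-s}-s\int_N^{+\infty}\frac{\{x\}-\tfrac12}{x^{s+1}}\,dx
\]
(see, e.g., \cite{tit}); the first three terms are exactly $A_N(s)$, and the error integral is bounded in modulus by $\frac{|s|}{2\Re s}N^{-\Re s}\to0$. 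Hence $A_N(s)\to\zeta(s)$ on all of $\{\Re s>0\}$, and inserting both limits into the displayed formula for $D_N(\alpha)$ yields the stated value of $\lim_{N\to+\infty}D_N(\alpha)$.

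An alternative, possibly cleaner, route avoids the subcritical strip altogether: prove the identity first for $\Re\alpha<-3$, where $A_N(-2-\alpha)\to\zeta(-2-\alpha)$ is elementary, and then extend it by analytic continuation. Indeed, by dominated convergence $D(\alpha):=\lim_N D_N(\alpha)=\int_1^{+\infty}y^{\alpha}p(y)\,dy$ is holomorphic on $\{\Re\alpha<-1\}$ ($p$ being bounded, $y\mapsto y^{\alpha}p(y)\in L^1([1,+\infty))$ there, locally uniformly in $\alpha$), and the right-hand side of the asserted identity also extends holomorphically to that half-plane (the would-be poles at $\alpha=-2,-3$ cancel); equality on the open set $\{\Re\alpha<-3\}$ then forces equality throughout $\{\Re\alpha<-1\}$, in particular for $\Re\alpha<-2$.

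The step I expect to be the real obstacle is precisely the convergence $A_N(s)\to\zeta(s)$ for $0<\Re s\le1$: here term-by-term convergence of the Dirichlet series fails, and one must either quote the Euler--Maclaurin identity above (or a Hurwitz-type formula for $\zeta$) or carry out the analytic-continuation argument. Everything else --- recasting \eqref{res-DNal} in the $A_N$-form and the two elementary limits --- is routine bookkeeping, already carried out above in and around Lemma \ref{lem-DNal}.
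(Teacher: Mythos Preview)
Your argument is correct and follows essentially the same route as the paper: both reduce the question to $A_N(-2-\alpha)\to\zeta(-2-\alpha)$ via the Euler--Maclaurin/approximate functional equation for $\zeta$ (the paper quotes it as \cite[Th.~4.11]{tit}, you write out the integral remainder explicitly), together with the trivial vanishing of the $(2\pi N)^{\alpha+1}$ term. Your treatment is in fact a bit more careful than the paper's (you note the removable singularity at $\alpha=-3$, and your alternative analytic-continuation argument is a nice bonus the paper does not give).
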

\begin{proof}
From \cite[Th. 4.11]{tit}
\beq
\label{tit411}
\zeta(s)=\sum_{n=1}^N\frac{1}{n^s}-\frac{N^{1-s}}{1-s}+O(N^{-\Re s})
=\zeta_N(s)-\frac{N^{1-s}}{1-s}+O(N^{-\Re s})
\eeq
uniformly for $\Re s>0$, $|\Im s|<2\pi N/C$, where $C$ is a given constant
greater than $1$, and $\zeta_N(s)$ was defined in \eqref{zetaN}. Then from
\eqref{tit411} and  the definition  \eqref{ANs} of $A_N(s)$  we have for such
values of $s$:
\[
A_N(s)
=\zeta(s) -\frac12 \frac{1}{N^s}- O(N^{-\Re s})\,.
\]
This implies that  for $\Re s>0$ we have  $\lim_{N\to\infty}A_N(s)= \zeta(s)$. Then $\Re \al<-2$ implies $\lim_{N\to\infty}A_N(-2-\al)= \zeta(-2-\al)$ and by the definition of $D_N(\al)$, for $\Re \al<-2$
\[
\begin{aligned}
&\lim_{N\to\infty}
D_N(\al)\\
&=
\lim_{N\to\infty}
\Big[
-\frac{\pi^2}{6}\frac1{\alpha+1}
+\frac{\pi}{2}\frac1{\alpha+2}
-\frac14\frac1{\alpha+3}-
\frac{(2\pi)^{\al+3}}{2(\al+2)(\al+1)} A_N(-2-\al)
+\frac{\pi^2}{6}\frac{(2\pi N)^{\al+1}}{\al+1}
\Big]\\
&=
-\frac{\pi^2}{6}\frac1{\alpha+1}
+\frac{\pi}{2}\frac1{\alpha+2}
-\frac14\frac1{\alpha+3}-\frac{(2\pi)^{\al+3}}{2(\al+2)(\al+1)}\zeta(-2-\al).
\end{aligned}
\]
\end{proof}

\begin{cor}
\label{cor-Dal}
Let $D(\alpha)$ be as in \eqref{e.2.7}, then for $\Real\alpha<-2$:
\beq
\label{eq-cor-Dal}
D(\alpha)=
-\frac{\pi^2}{6}\frac1{\alpha+1}
+\frac{\pi}{2}\frac1{\alpha+2}
-\frac14\frac1{\alpha+3}-\frac{(2\pi)^{\al+3}}{2(\al+2)(\al+1)}\zeta(-2-\al).
\eeq
\end{cor}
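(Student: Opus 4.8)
The plan is to recover $D(\alpha)$ as the limit, as $N\to+\infty$, of the incomplete integrals $D_N(\alpha)$ of Lemma \ref{lem-DNal}, and then to read off its value from Lemma \ref{lem-limDNal}.

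First I would note that, for $\Re\alpha<-1$, the function $y\mapsto y^{\alpha}p(y)$ lies in $L^1([1,+\infty))$ (since $p$ is bounded and $\Re\alpha<-1$), so the improper integral in \eqref{e.2.7} defining $D(\alpha)$ is absolutely convergent and
\[
D(\alpha)=\int_1^{+\infty}y^{\alpha}p(y)\,dy=\lim_{R\to+\infty}\int_1^{R}y^{\alpha}p(y)\,dy.
\]
Evaluating this limit along the subsequence $R=2\pi N$, $N\in\N$, which tends to $+\infty$, gives $D(\alpha)=\lim_{N\to+\infty}D_N(\alpha)$ with $D_N(\alpha)=\int_1^{2\pi N}y^{\alpha}p(y)\,dy$ as in \eqref{DNalpha}. (Equivalently, one may apply dominated convergence to the functions $\chi_{[1,2\pi N]}(y)\,y^{\alpha}p(y)$, all dominated by $|y^{\alpha}p(y)|\in L^1([1,+\infty))$.)

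Second, I would specialize to $\Re\alpha<-2$, for which Lemma \ref{lem-limDNal} computes $\lim_{N\to+\infty}D_N(\alpha)$ explicitly as
\[
-\frac{\pi^2}{6}\frac1{\alpha+1}+\frac{\pi}{2}\frac1{\alpha+2}-\frac14\frac1{\alpha+3}-\frac{(2\pi)^{\alpha+3}}{2(\alpha+2)(\alpha+1)}\zeta(-2-\alpha).
\]
Comparing with the previous display yields \eqref{eq-cor-Dal}.

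Given the two preceding lemmas the argument is essentially immediate; the only point deserving a word is that the limit of $D_N(\alpha)$ along the arithmetic progression $2\pi N$ indeed recovers the full improper integral, which is guaranteed by the $L^1$-integrability (equivalently absolute convergence) of the integrand throughout the range $\Re\alpha<-1$, and in particular for $\Re\alpha<-2$. No estimate beyond those already used --- notably the uniform asymptotic \eqref{tit411} entering the proof of Lemma \ref{lem-limDNal} --- is needed.
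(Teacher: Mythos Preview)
Your proof is correct and follows exactly the same route as the paper: the paper's one-line argument is ``This is immediate from Lemma \ref{lem-limDNal} and $D(\alpha)=\lim_{N\to\infty}D_N(\alpha)$ for $\Re\alpha<-2$,'' and you have simply made explicit the (entirely standard) justification of that limit via the $L^1$-integrability of $y\mapsto y^{\alpha}p(y)$.
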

\begin{proof}
This is immediate from Lemma \ref{lem-limDNal} and $D(\al)=\lim_{N\to\infty}D_N(\al) $ for $\Re\al<-2$.
\end{proof}

\begin{rem}
\label{r.2.4}
$1)$
We also  can express $D(\alpha)$ as a Mellin transform. We first recall 
that the Mellin transform of a complex-valued continuous function 
$f$ on the positive real line is defined by
\beq
\label{mellin}
M(f)(s)\equiv\int_0^{+\infty}x^{s-1}f(x)dx, \ s\in\C
\eeq
(see, e.g., \cite{Ma06,ober}). It exists as an absolutely convergent integral if
$x\to x^{s-1}f(x)$ is in $L^1([0,+\infty))$.
Set
\[
p_0(y)\equiv
\left\{
        \begin{aligned}
            &p(y)      ,\quad  && \text{ for } y\in[1,+\infty),\\
            &0      ,  && \text{ for } y\in[0,1),
        \end{aligned}\right.
\]
with $p$ as in Section \ref{sec2} (e.g. \eqref{e.2.2}).
Then using the definition of $D$ in \eqref{e.2.7} we see that
\[
D(\alpha)=M(p_0)(\alpha+1),
\] 
for all $\alpha\in\C$ with $\Real\alpha<-1$.
In this way $D(\alpha)$ appears as the Mellin transform of the function $p_0$ evaluated
at $\alpha+1$.\\

$2)$
Let us derive an expression of $D$ as a convergent series containing incomplete gamma functions.
We have namely from \eqref{e.2.7} and \eqref{e.2.2}, for $\Real\alpha<-1$:
\begin{equation}
\label{e.2.24}
\begin{aligned}
D(\alpha)=&\int_1^{+\infty}y^{\alpha}\sum_{n=1}^{+\infty}\frac{\cos ny}{n^2}dy
=\sum_{n=1}^{+\infty}\frac{1}{n^2}\int_1^{+\infty}y^{\alpha}\cos ny dy\\
=&\frac{1}{2}\sum_{n=1}^{+\infty}\frac{i^{-\alpha-1}}{n^{3+\al}}[(-1)^{-\alpha-1}\Gamma(\alpha+1,-in)
+\Gamma(\alpha+1,in)],
\end{aligned}
\end{equation}
where we used dominated convergence to interchange sum and integral (since
$\left|\sum_{n=1}^{N}\frac{\cos ny}{n^2}\right|\leqslant\sum_{n=1}^{+\infty}\frac1{n^2}=\frac{\pi^2}{6}$
for all $N\in\N$) and 
\[
\int_1^{+\infty}x^{\nu}e^{ixy}dx
=(-iy)^{-\nu-1}\Gamma(\nu+1,-iy),
\]
for $\Real\nu<0$, cf. \cite[3.4, p. 199]{ober},
\[
\Gamma(\lambda,z)=\int_{z}^{+\infty}t^{\lambda-1}e^{-t}dt
=z^{\frac12\lambda-\frac12}e^{-\frac12z}W_{\frac\lambda2-\frac12,\frac12\lambda}(z)
\]
(\cite[p.  255]{ober}, $W_{k,\nu}(z)$ being a Whittaker function,
\cite[p.  254]{ober}). All the series in formula \eqref{e.2.24} are absolutely
convergent for $\Re\alpha <-1$.
\end{rem}

\section{The representation of $\zeta$ in terms of the associated Clausen function $p$.}
\label{sec3}

Integral representations of $\zeta$ are known since the original work \cite{ri}.
In particular we mention the one given by Riemann
\[
\zeta(s)=\Gamma\Big(\frac{s}{2}\Big)^{-1}\pi^{\frac{s}{2}}
\left\{\int_1^{\infty}
[x^{\frac{s}{2}}+x^{\frac{1-s}{2}}]\psi(x)dx-\frac1{s(1-s)}\right\}
\]
\[
\psi(x)\equiv \sum_{n=1}^{\infty}e^{-n^2\pi x}
\]
(see \cite{ri}, \cite[p. 16]{Edw74} and  \cite{tit}). Another
representation  we
would like to   mention  is
\[
\zeta(s)=\frac{\Gamma(-s)}{2\pi i}
\int_{\Gamma}
\frac{(-x)^s}{e^x-1}\frac{dx}{x},
\]
$\Gamma$ being a contour which begins at $+\infty$, descends the real axis, circles 
the singularity at the origin once in the positive direction, and returns up to the positive real axis to $+\infty$, and where $(-x)^s=\exp[s\log(-x)]$ is defined in the usual way for $-x$ not on the negative real axis (see, e.g., \cite[p. 137]{Edw74}). Other somewhat similar representations are given by the Riemann Siegel integral formula (see, e.g., \cite[p. 166]{Edw74}). Also the remainder in the approximation formula for $\zeta(s)$ has an integral form:
\[
\zeta(s)=\zeta_N(s)-\frac{N^{1-s}}{1-s}-s\int_N^{\infty}\left\{t\right\}t^{-s-1}dt,
\]
where $\{t\}$ denotes the fractional part of $t$ and  $\Real s>0$ (see, e.g., \cite[p. 144]{Ten}). Further integral
representations are given, e.g., by \cite[pp. 102, 103, Eqs. 41-43, for all
$s\in\C$; Eqs. 44-46, for $0<\Re s<1$; Eqs. 48, 50,  for $\Re s>1$;
Eqs. 49 for $\Re s>0$]{srich}.
Another representation for $\Re s>1$ is given in
\cite[p. 405]{RSdmj8196} and for $\Re s>0$ in \cite{ivi}, see also
\cite[p. 14]{tit}.  

The main aim of this section is to state and prove  Theorem \ref{t.3.1} 
which gives a new integral representation of $\zeta$ for $\Real s>0$. This representation is  in terms of the dilogarithm function,
via the function $D$ (introduced in \eqref{e.2.7} and discussed in Section
2). We shall also deduce some consequences from our representation. 

\begin{thm}
\label{t.3.1}
For any $\Real s>0$ we have 
\begin{align}\label{e.3.1}
\zeta(s)=\frac{ 2s(1+s)}{(2\pi)^{1-s}}
\left\{
\frac{\pi^2}{6}\frac1{1+s}-\frac{\pi}{2}\frac1s
-\frac14 \frac1{1-s}-D(-2-s)\right\}
\end{align} 
where $D$ is as in Section \ref{sec2} (formula \eqref{e.2.7}).
\end{thm}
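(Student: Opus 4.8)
The plan is to obtain \eqref{e.3.1} as a direct rearrangement of Corollary \ref{cor-Dal}, which already expresses $D(\alpha)$ in closed form in terms of $\zeta(-2-\alpha)$ for $\Re\alpha<-2$, and then to extend the resulting identity to all $\Re s>0$ by analytic continuation. First I would set $\alpha=-2-s$, so that $\Re\alpha<-2$ corresponds exactly to $\Re s>0$; note the shifts $\alpha+1=-1-s$, $\alpha+2=-s$, $\alpha+3=1-s$, and $-2-\alpha=s$. Substituting these into \eqref{eq-cor-Dal} gives
\[
D(-2-s)=\frac{\pi^2}{6}\frac{1}{1+s}-\frac{\pi}{2}\frac{1}{s}-\frac14\frac{1}{1-s}-\frac{(2\pi)^{1-s}}{2s(1+s)}\,\zeta(s),
\]
where I have used $(2\pi)^{\alpha+3}=(2\pi)^{1-s}$ and $(\alpha+2)(\alpha+1)=(-s)(-1-s)=s(1+s)$, so the two sign changes cancel. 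Solving this linear equation for $\zeta(s)$ immediately yields \eqref{e.3.1}: multiply through by $-1$, move $D(-2-s)$ to the left, and multiply by $2s(1+s)/(2\pi)^{1-s}$.

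The only subtlety is the range of validity: Corollary \ref{cor-Dal} and hence the displayed identity hold a priori only for $\Re\alpha<-2$, i.e. $\Re s>0$ \emph{with} the integral $D(-2-s)=\int_1^{+\infty}y^{-2-s}p(y)\,dy$ absolutely convergent, which by the discussion in Section \ref{sec2} ($p$ bounded) is precisely $\Re(-2-s)<-1$, i.e. $\Re s>-1$. In fact the excerpt defines $D(\alpha)$ as a convergent Lebesgue integral for all $\Re\alpha<-1$, so $D(-2-s)$ is already a well-defined holomorphic function of $s$ on the strip $\Re s>0$ (indeed on $\Re s>-1$), by the standard argument that differentiation under the integral sign is justified by the bound $|y^{-2-s}p(y)|\le \tfrac{\pi^2}{6}y^{-2-\Re s}\in L^1([1,\infty))$ locally uniformly. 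Thus both sides of \eqref{e.3.1} are meromorphic on $\Re s>0$ (the right side manifestly so, with possible poles only at $s=1$ coming from the $\tfrac14\tfrac1{1-s}$ term, all other apparent poles at $s=0,-1$ being outside the half-plane), and they agree on the set $\Re s>0$ by the computation above; since that computation was valid directly for all $\Re\alpha<-2\Leftrightarrow\Re s>0$, no further continuation argument is actually needed — Corollary \ref{cor-Dal} already covers the whole half-plane.

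I expect the main ``obstacle'' to be purely bookkeeping: tracking the three index shifts $\alpha\mapsto -2-s$ through the four terms of \eqref{eq-cor-Dal} and checking that the factor $-\frac{(2\pi)^{\alpha+3}}{2(\alpha+2)(\alpha+1)}$ becomes $-\frac{(2\pi)^{1-s}}{2s(1+s)}$ with the correct sign, so that after solving for $\zeta(s)$ the prefactor is $+\frac{2s(1+s)}{(2\pi)^{1-s}}$ and the bracket is $\bigl\{\frac{\pi^2}{6}\frac1{1+s}-\frac{\pi}{2}\frac1s-\frac14\frac1{1-s}-D(-2-s)\bigr\}$ as claimed. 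There is no analytic difficulty and no new estimate required; everything rests on Lemma \ref{lem-DNal}, Lemma \ref{lem-limDNal}, and Corollary \ref{cor-Dal}, together with the elementary observation that $D(-2-s)$ makes sense as an absolutely convergent integral throughout $\Re s>0$.
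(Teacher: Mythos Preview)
Your proposal is correct and follows exactly the same route as the paper: substitute $\alpha=-2-s$ into Corollary~\ref{cor-Dal}, track the three shifts $\alpha+1=-1-s$, $\alpha+2=-s$, $\alpha+3=1-s$, and solve the resulting linear relation for $\zeta(s)$, noting $(2\pi)^{1-s}\neq0$. Your extra paragraph on analytic continuation is harmless but, as you yourself observe, unnecessary, since the substitution $\Re\alpha<-2\Leftrightarrow\Re s>0$ already gives the full half-plane directly.
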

\begin{proof}
From Corollary \ref{cor-Dal} we have, for $\Real\alpha<-2$:
\[
D(\alpha)=-\frac{\pi^2}{6}\frac1{\alpha+1}+\frac\pi2\frac1{\alpha+2}
-\frac14\frac1{\alpha+3}-\frac{(2\pi)^{\al+3}}{2(\alpha+1)(\alpha+2)}\zeta(-\alpha-2).
\]
Setting $s=-\alpha-2$ we have for $\Real s>0$:
\[
\begin{aligned}
D(-2-s)=&-\frac{\pi^2}{6}\frac1{-s-1}+\frac\pi2\frac1{-s}
-\frac14\frac{1}{-s+1}-\frac{(2\pi)^{1-s}}{2(-s-1)(-s)}\zeta(s)\\
=&
\frac{\pi^2}{6}\frac1{s+1}-\frac\pi2\frac1{s}
-\frac14\frac{1}{1-s}-\frac{(2\pi)^{1-s}}{2(s+1)s}\zeta(s),
\end{aligned}
\]
from which the theorem follows by noticing that $(2\pi)^{1-s}\neq0$.
\end{proof}

Let us derive some consequences from  Theorem \ref{t.3.1} (Remarks
\ref{r.3.1}, \ref{r.3.2} and \ref{r.3.3}).
\begin{rem}
\label{r.3.1}
The representation in Theorem \ref{t.3.1} gives a new proof of the fact that $\zeta(s)$ is 
a meromorphic function in $\Real s>0$, with a simple pole at $s=0$. In fact $s\to D(-2-s)$ is  holomorphic in $\Real s>0$, since it is equal to $M(p_0)(-1-s)$, which is holomorphic in 
$\Real s>0$ (and even in $\Real s>-1$), being the Mellin transform at $-1-s$ of the function $p_0$ and 
$x\to x^{-2-u} p_0(x)$ is integrable on $\R_+$ (cf., e.g., \cite{tit}, ch. I, 1.29, pp. 46-47).
The other functions entering Theorem \ref{t.3.1} are meromorphic, with a simple pole at $s=1$.
\end{rem}

\begin{rem}
\label{r.3.2}
If $s_0$ is a zero of $\zeta$, with $\Real s_0>0$, then
\begin{align}
\label{e.3.2}
D(-2-s_0)=\frac{\pi^2}{6}\frac1{1+s_0}-\frac\pi2\frac1{s_0}
-\frac14\frac1{1-s_0}
\end{align}
(since $s_0\neq0$ and also $1+s_0\neq0$, because $\Real(1+s_0)=1+\Real s_0>1$). In particular 
if $s_0=u_0+iv_0, u_0>0, \ v_0\in\R$, then
\begin{align}
\begin{split}
\label{Astern}
D(-2-s_0)=\frac{\pi^2}{6}\frac1{1+u_0+iv_0}-\frac{\pi}{2}
\frac1{u_0+iv_0}-\frac14\frac1{1-u_0-iv_0}\\
=\frac{\pi^2}{6}\frac{1+u_0-iv_0}{(1+u_0)^2+v_0^2}-\frac\pi2
\frac{u_0-iv_0}{u_0^2+v_0^2}
-\frac14\frac{1-u_0-iv_0}{(1-u_0)^2+v_0^2}.
\end {split}
\end{align}
In particular
\begin{align}\label{e.3.3a}
\Real D(-2-s_0)=\frac{\pi^2}{6}\frac{1+u_0}{(1+u_0)^2+v_0^2}
-\frac\pi2\frac{u_0}{u_0^2+v_0^2}
-\frac14\frac{1-u_0}{(1-u_0)^2+v_0^2}
\end{align}
\begin{align}\label{e.3.3b}
\im D(-2-s_0)=v_0\left\{
-\frac{\pi^2}{6}\frac1{(1+u_0)^2+v_0^2}
+\frac\pi2\frac1{u_0^2+v_0^2}-\frac14\frac1{(1-u_0)^2+v_0^2}
\right\}.
\end{align}
For the special case $u_0=\frac12$ we get then
\begin{align}
\label{e.3.3}
D\Big(-\frac52-iv_0\Big)=\frac{\pi^2}{6}\frac1{\frac32+iv_0}-\frac\pi2\frac1{\frac12+iv_0}
-\frac14\frac1{\frac12-iv_0}.
\end{align}
From this it follows:
\begin{align}
\label{e.3.4}
\Real D\Big(-\frac52-iv_0\Big)=\frac12
\left\{
\frac{\pi^2}{6}\frac3{(\frac32)^2+v_0^2}-\frac\pi2\frac1{(\frac12)^2+v_0^2}
-\frac14\frac1{(\frac12)^2+v_0^2}
\right\}
\end{align}
\begin{align}
\label{e.3.5}
\im D\Big(-\frac52-iv_0\Big)=v_0
\left\{
-\frac{\pi^2}{6}\frac1{(\frac32)^2+v_0^2}+\frac\pi2\frac1{(\frac12)^2+v_0^2}
-\frac14\frac1{(\frac12)^2+v_0^2}
\right\}.
\end{align}
These formulae can be used for the numerical verification whether $s_0$ is a zero
of the $\zeta$ on the critical line (this obviously relies
on an efficient evaluation of the integral 
\[
D\Big(-\frac52-iv_0\Big)=\int_1^{+\infty}y^{-\frac52-iv_0}p(y)dy,
\]
with $p$ defined in Section \ref{sec2}).
Possibly also results on the distribution of zeros of $\zeta$ on the
critical line can be obtained in this way. Numerical work in this direction is planned. 
\end{rem}

One could ask the question whether it could be possible to exclude that there exists a sequence  $s_0(n)=u_0+iv_0(n)$ with $1>u_0>0$, $u_0\neq\frac12$, $v_0(n)\to+\infty$ satisfying \eqref{Astern} for all $n$, this implying that there are at most finitely many zeros of $\zeta$ on $\Real s=u_0$. For $u_0=\frac12$ one knows, however, that there is such a sequence, by Hardy's proof of the existence of infinitely many zeros of $\zeta$ on the critical line, see, e.g., \cite{tit} (ch. 11). A comparison of the first and second order in an asymptotic expansion in powers of $\frac1v$ for $v\to\infty$ of the two members of \eqref{Astern} gives a negative answer to this question, in the sense that it does not permit to distinguish between the behavior at $u_0=\frac12$ and at $u_0\neq \frac12$ (up to these orders in $\frac1{v}$).

\begin{rem}
\label{r.3.3}
Let us briefly indicate how one can deduce in a simple way a zero-free region
for $\zeta$ close to the real line, using the integral representation of $\zeta$ given in  Theorem \ref{t.3.1}.
\end{rem}


From Theorem \ref{t.3.1} if $s_0=u_0+iv_0$ satisfies $u_0>0$ and $\zeta(s_0)=0$ then
\begin{align}
\label{e.3.6}
\frac{2s_0(1+s_0)}{(2\pi)^{1-s_0}}\left\{\frac{\pi^2}{6}\frac1{1+s_0}-\frac\pi2\frac1{s_0}-\frac14\frac1{1-s_0}
-D(-2-s_0)\right\}=0.
\end{align}
Since $s_0\neq0,\ 1+s_0\neq0$ this implies, if $v_0\neq0$, dividing by $2v_0 s_0(1+s_0)/(2\pi)^{1-s_0}$:
\begin{align}\label{e.3.7}
\frac1{v_0}\Big[\frac{\pi^2}{6}\frac1{1+s_0}-\frac\pi2\frac1{s_0}-\frac14\frac1{1-s_0}\Big]
=\frac1{v_0}D(-2-s_0).
\end{align}
But
\begin{equation}
\begin{aligned}
\label{e.3.8}
\im\Big(\frac{D(-2-s)}{v}\Big)
=&-\int_1^{+\infty}x^{-2-u}\frac{\sin(v\ln x)}{v}p(x)dx\\
=&-\int_1^{+\infty}x^{-2-u}\frac{\sin(v\ln x)}{v\ln x}(\ln x)p(x)dx.
\end{aligned}
\end{equation}
For any $u>0$ define 
\begin{align}\label{e.3.9a}
c(u)\equiv -\int_1^{+\infty}x^{-2-u}(\ln x)p(x)dx\,,\qquad u>0.
\end{align} 
Then we have
\begin{align}
\label{e.3.11}
\begin{split}
\left|\im \frac{D(-2-s)}{v}-c(u)\right|
=&\left|\int_1^{+\infty}y^{-2-u}\Big[\frac{\sin(v\ln y)}{v\ln y}-1\Big](\ln y) \, p(y)dy\right|\\
\leqslant&\int_1^{+\infty}y^{-2-u}
\left|\frac{\sin(v\ln y)-v\ln y}{v\ln y}\right|(\ln y)|p(y)|dy.
\end{split}
\end{align}
But from Taylor's formula
\beq
\label{3.13a}
\left|\frac{\sin(v\ln y)-v(\ln y)}{v\ln y}\right|
\leqslant\frac{v^{\ast3}(y)(\ln y)^3}{3!v\ln y}
\leqslant\frac{v^2(\ln y)^2}{3!}
\eeq
for all $v>0$ for some $v^{\ast}(y)\in[0,v],\ y\in[1,+\infty)$.
On the other hand the left hand side of \eqref{3.13a} is also bounded by 2.  Introducing these bounds into \eqref{e.3.11} we get
\beq
\label{e.3.12}
\left|\im\frac{D(-2-s)}{v}-c(u)\right|
\leqslant\frac{v^2}{3!}\int_1^{+\infty}y^{-2-u}(\ln y)^3|p(y)| dy
\leqslant b(u,v),
\eeq
with
\[
b(u,v)\equiv
\frac{\pi^2}3\min\left\{\frac{v^2}{3!}\frac12 \int_1^{+\infty}y^{-2-u}(\log y)^3 dy, \int_1^{+\infty} y^{-2-u} (\log y) dy\right\}
\]
and
where we used $|p(y)|\leqslant{\pi^2}/{6},\ y\in[1,+\infty), u>0$. 

Assume that $s_0=u_0+iv_0$ satisfies $u_0>0$ and $\zeta(s_0)=0$ and set 
\[
B(u_0,v_0)\equiv\frac1{v_0}\Big[\frac{\pi^2}{6}\frac1{1+s_0}-\frac\pi2\frac1{s_0}-\frac14\frac1{1-s_0}\Big].
\]
By Eq. \eqref{e.3.7} one has that $B(u_0,v_0)=\frac{D(-2-s_0)}{v_0}$, then the inequality \eqref{e.3.12} implies that
\begin{align}
\label{e.3.14}
c(u_0)-b(u_0,v_0)\leqslant\im B(u_0,v_0)
\leqslant c(u_0)+b(u_0,v_0).
\end{align}
But
\beq
\label{imB}
\begin{aligned}
\im B(u_0,v_0)=-\frac{\pi^2}{6}\frac1{(1+u_0)^2+v_0^2}+\frac\pi2\frac1{u_0^2+v_0^2}-\frac14\frac1{(1-u_0)^2+v_0^2}\,,
\end{aligned}
\eeq
for all $0<u_0<1$.

The inequality \eqref{e.3.14} is certainly not satisfied, i.e., $s_0=u_0+iv_0$ is not a zero of the function $\zeta(s)$, if
\beq
\label{estB}
\im B(u_0,v_0)>c(u_0)+b(u_0,v_0)
\eeq
or
\beq
\label{e.3.23b}
\im B(u_0,v_0)<c(u_0)-b(u_0,v_0).
\eeq
But $\max_{u_0\in(0,1)} b(u_0,v_0) \leq \frac{\pi^2}3\min\left\{\frac{v_0^2}{2},1\right\}$, where we used that for any $\alpha<-1$:
\[
\int_1^{+\infty}y^{\alpha}(\ln y)^3 dy
=\frac{6}{(\alpha+1)^4}\;,\qquad
\int_1^{+\infty}y^{\alpha}(\ln y) dy
=\frac{1}{(\alpha+1)^2}
\]
(as seen by integrations by parts).
From \eqref{imB} it follows that for $u_0\in(0,1)$ \eqref{estB} is certainly satisfied for values $(u_0,v_0)$ such that
\begin{align}
\label{s.12a}
\frac{\pi^2}{2}\frac1{u_0^2+v_0^2}
>\max_{u_0 \in(0,1)}c(u_0)
+ \frac{\pi^2}3\min\left\{\frac{v_0^2}{2},1\right\}+\frac{\pi^2}{6}\frac{1}{(1+u_0)^2+v_0^2}+\frac14\frac1{(1-u_0)^2+v_0^2}.
\end{align}

To check the inequality \eqref{s.12a} we need a lower bound for  $\max_{u_0 \in(0,1)}c(u_0)$, this can be obtained numerically as follows:
first we
rewrite $c(u_0)$ as  
\begin{align}
c(u_0) = & -\int_1^{2\pi N}x^{-2-u_0}(\ln x)p(x)dx -\int_{2\pi N}^{+\infty}x^{-2-u_0}(\ln x)p(x)dx
\nonumber
\\
\equiv&c_{N}(u_0) + r_{N}(u_0)\,,
\label{e:bro-1}
\end{align}
where $N\in\N$. Then we  notice that $p(x)\geq0$ for $x\in[2\pi n, 2\pi
n+\pi\frac{3-\sqrt3}{3}] \cup [2\pi n+\pi\frac{3+\sqrt3}{3}, 2\pi n+2\pi]$,
for any $n=0,1,2, . . . $, and  $p(x)\leqslant 0$ for $x\in[2\pi n+\pi\frac{3-\sqrt3}{3},2\pi n +\pi\frac{3+\sqrt3}{3}]$, for any $n=0,1,2, . . . $, and we rewrite $c_{N}(u_0)$ as the sum of its positive and negative part, resp. $c_{N,+}(u_0)$ and $c_{N,-}(u_0)$:
\begin{equation}
\label{e:hersh}
c_{N}(u_0) =  c_{N,+}(u_0) - c_{N,-}(u_0) 
\end{equation}
Since $c_{N,+}(u_0)$ and  $c_{N,-}(u_0)$ are  decreasing functions of
$u_0$, for any $N\in \NN$ and $0<u_0<1$, the equality \eqref{e:hersh}
gives  
\begin{equation}
\label{e:ref-1}
c_{N,+}(1) -  
 c_{N,-}(0) 
\leqslant  c_{N}(u_0) \leqslant c_{N,+}(0) -  
c_{N,-}(1) \,.
\end{equation}
To get a lower and an upper bound  for $r_{N}(u_0)$ for $0<u_0<1$ we notice
that for any $x>0$, we have $-\frac{\pi^2}{12}\leqslant p(x)\leqslant \frac{\pi^2}{6}$. Then we get
\begin{equation}
\label{e:ref-3}
m_{N} \equiv -\frac{\pi^2}{6} \int_{2\pi N}^{+\infty}x^{-2}(\ln x)dx
\leqslant r_{N}(u_0)\leqslant
\frac{\pi^2}{12}\int_{2\pi N}^{+\infty}x^{-2}(\ln x)dx \equiv M_N\,.
\end{equation}
From the formula  \eqref{e:bro-1}  and from the bounds \eqref{e:ref-1} and  \eqref{e:ref-3}  it follows that 
\[
c_{N,+}(1) -  
 c_{N,-}(0) + m_{N}
\leqslant  c(u_0) \leqslant c_{N,+}(0) -  
c_{N,-}(1) + M_{N}\,.
\]
The terms in the upper and lower bound can be easily computed numerically. E.g. for $N=100$ we
obtain 
\begin{equation}
\label{e:c0num}
-0.12 \lesssim  c(u_0) \lesssim 0.36\,.
\end{equation}

Now using the numerical estimate given in \eqref{e:c0num} we see
that \eqref{s.12a} holds for values of $u_0>0$ near $0$ and values of $v_0 \leq 1.1$
(e.g. for $u_0=0.1,\ v_0=1.1$ we have $\sim 4.05$ at the left hand side and $\sim
3.15$ at the right hand side of the latter inequality).
Let us remark that the bound \eqref{e.3.23b} does not seem to yield better results. 
As it stands the zero-free region is smaller than the one obtained in \cite{albce} 
(by another method, see also this reference for other zero-free regions).
Obviously even within the limit of what can be reached from Theorem
\ref{t.3.1} our considerations are not optimal in many directions. E.g. one
could restrict
$u_0$ to smaller intervals, such as $u_0\in(\frac12,1)$, to get slightly  better bounds
for $c(u_0)$. But in order to obtain really stronger results one would require better numerical or analytical techniques to handle the function $D$.

\section{Other representations of $\zeta$ in terms of periodic functions}
\label{sec4}

Set
\beq
\label{tildep}
\tilde p(x)\equiv p(x)+\frac{\pi^2}{12}, \ x\in\R.
\eeq
Since $\min p=p(\pi)=-\pi^2/12$ the function $\tilde p$ satisfies $\tilde p(x)\geq0$ for all $x\in\R$. From the definition \eqref{e.2.1} of $p$, $\tilde p(x)$ can be written as 
\[
\tilde p(x) = \frac{1}{4}\big(x-(2n+1)\pi\big)^2\quad x\in[2\pi n,2\pi (n+1)),\,n\in\N_0\equiv\left\{0\right\}\cup\N\,.
\]

For $\Re\al <-1$, set  
\beq
\label{widetildeD}
\widetilde D(\alpha)\equiv\int_1^{+\infty}y^{\alpha}\tilde p(y)dy=D(\al) - \frac{\pi^2}{12} \frac{1}{\al+1}.
\eeq
\begin{rem} From Corollary \ref{cor-Dal} and  Theorem \ref{t.3.1}, and from the definition \eqref{widetildeD} of $\widetilde 
D$, it follows that for any $\Re \al<-2$ and  $\Re s>0$ we have
\[
\widetilde D(\alpha)=
-\frac{\pi^2}{4}\frac1{\alpha+1}
+\frac{\pi}{2}\frac1{\alpha+2}
-\frac14\frac1{\alpha+3}-\frac{(2\pi)^{\al+3}}{2(\al+2)(\al+1)}\zeta(-2-\al),
\]
hence the representation of $\zeta$ given in Theorem \ref{t.3.1} can also be written in the form:

\beq
\label{zetaD}
\zeta(s)=
\frac{ 2s(1+s)}{(2\pi)^{1-s}}
\bigg\{
\frac{\pi^2}{4}\frac1{1+s}-\frac{\pi}{2}\frac1s
-\frac14 \frac1{1-s}-\widetilde D(-2-s)\bigg\}, \Real s>0.
\eeq

\end{rem}

We define the function 
\beq
\label{tildeq}
 q(x)\equiv \frac{1}{2}\big(x-(2n+1)\pi\big)\quad x\in[2\pi n,2\pi (n+1)),\,n\in\N_0
\eeq
We observe that $ q(x)\equiv  \frac{d}{dx} \tilde p(x)=\frac{d}{dx}p(x)$ for any $x\in (2\pi n, 2\pi (n+1))$ and $n\in\N_0$.

For any $\Re \al<-1$ set 
\beq
\label{tildeEal}
 E(\al)\equiv 
\int_1^{+\infty}y^{\alpha} q(y)dy.
\eeq

\begin{prop}
\label{prop-tildeE}
Let $ E(\alpha)$ be as in \eqref{tildeEal}, then for $\Real\alpha<-1$:
\beq
\label{tildeE}
 E(\alpha)=
\frac{\pi}{2}\frac{1}{\al+1}-\frac{1}{2}\frac{1}{\al+2}+\frac{(2\pi)^{2+\al}}{2(\al+1)}\zeta(-1-\al).
\eeq
\end{prop}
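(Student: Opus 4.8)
The plan is to mimic directly the proof of Lemma \ref{lem-DNal}: on each period interval $q$ is a polynomial of degree one (a linear ``sawtooth''), so $y^\alpha q(y)$ is elementary to integrate, and the resulting telescoping sums are of the type already encountered. Concretely, I would first introduce the truncated integral $E_N(\alpha)\equiv\int_1^{2\pi N}y^\alpha q(y)\,dy$ for $\Re\alpha<-1$ and $N\in\N$, and split the domain as $[1,2\pi)\cup\bigcup_{k=1}^{N-1}[2\pi k,2\pi(k+1))$. On $[1,2\pi)$ one uses $q(y)=\tfrac12(y-\pi)$, giving an explicit antiderivative involving $y^{\alpha+1}$ and $y^{\alpha+2}$ evaluated at $1$ and $2\pi$. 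On $[2\pi k,2\pi(k+1))$ one uses $q(y)=\tfrac12\big(y-(2k+1)\pi\big)=\tfrac12 y-\pi k-\tfrac\pi2$, so the integrand is a combination of $y^\alpha$ and $y^{\alpha+1}$ with $k$-dependent coefficients; integrating term by term produces differences $(2\pi(k+1))^{\alpha+j}-(2\pi k)^{\alpha+j}$ for $j=1,2$.

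The next step is the summation over $k$. Here I would invoke exactly the two elementary identities already established in the proof of Lemma \ref{lem-DNal}: the plain telescoping $\sum_{k=1}^{N-1}[(k+1)^{\alpha+j}-k^{\alpha+j}]=N^{\alpha+j}-1$, and the weighted one $\sum_{k=1}^{N-1}k\,[(k+1)^{\alpha+j}-k^{\alpha+j}]=-\zeta_N(-\alpha-j)+N^{\alpha+j+1}$ from \eqref{sumk}. Collecting the constant terms, the terms carrying $\zeta_N(-\alpha-1)$, and the terms carrying powers $N^{\alpha+1},N^{\alpha+2},N^{\alpha+3}$, one obtains a closed form for $E_N(\alpha)$ analogous to \eqref{res-DNal}; in particular one should recognise, just as in the passage to \eqref{ANs}, that the $\zeta_N$ term and the $N^{\alpha+1}$, $N^{\alpha+2}$ terms reorganise into $A_N(-1-\alpha)$ up to an explicit leftover power of $N$. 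Then I would pass to the limit $N\to\infty$: for $\Re\alpha<-1$ one has $\Re(-1-\alpha)>0$, so Theorem 4.11 of \cite{tit} (used in Lemma \ref{lem-limDNal} in the form \eqref{tit411}) gives $\lim_{N}A_N(-1-\alpha)=\zeta(-1-\alpha)$, while the remaining pure powers of $N$ have negative real exponent and vanish. Since $E(\alpha)=\lim_N E_N(\alpha)$ (the integrand is $L^1([1,\infty))$ because $q$ is bounded and $\Re\alpha<-1$), this yields \eqref{tildeE}.

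As an alternative, shorter route one could instead integrate by parts in \eqref{tildeEal}, using $q=\tfrac{d}{dx}\tilde p$ on each period together with the fact that $\tilde p$ is continuous on $\R$ (so no jump contributions appear at the points $2\pi k$): $\int_1^\infty y^\alpha q(y)\,dy = \big[y^\alpha \tilde p(y)\big]_1^\infty - \alpha\int_1^\infty y^{\alpha-1}\tilde p(y)\,dy = -\tilde p(1)-\alpha\,\widetilde D(\alpha-1)$. Since $\tilde p(1)=\tfrac14(1-\pi)^2=\tfrac14-\tfrac\pi2+\tfrac{\pi^2}{4}$ and $\widetilde D$ is given by the boxed formula in the Remark following \eqref{zetaD}, evaluated at $\alpha-1$, one gets \eqref{tildeE} after elementary algebra; note $\zeta(-2-(\alpha-1))=\zeta(-1-\alpha)$, consistently. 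I would present the first (direct) computation as the main proof since it parallels Lemma \ref{lem-DNal}, but the second gives a useful check.

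The main obstacle is purely bookkeeping: keeping track of the several $k$-dependent coefficients when expanding $\tfrac12 y-\pi k-\tfrac\pi2$ against $y^\alpha$, and then correctly matching the $\zeta_N$ and $N$-power contributions so that the limit collapses to the clean three-term-plus-$\zeta$ expression. There is no analytic subtlety beyond the already-cited uniform asymptotics \eqref{tit411}; the interchange of limit and the vanishing of the error terms are justified exactly as in Lemma \ref{lem-limDNal}.
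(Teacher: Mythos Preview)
Your proposal is correct and follows essentially the same route as the paper: truncate to $E_N(\alpha)=\int_1^{2\pi N}y^\alpha q(y)\,dy$, split into $[1,2\pi)$ and the period intervals, integrate the linear function $q$ against $y^\alpha$, reduce the $k$-sums via the telescoping identities, and then invoke \eqref{tit411} to pass to the limit (the paper substitutes \eqref{tit411} directly rather than packaging via $A_N$, but this is cosmetic). Your alternative integration-by-parts derivation is also valid and is in fact the content of the Remark immediately following Corollary~\ref{cor.4.7} (read in the direction $\widetilde D\to E$).
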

\begin{proof}
The proof  follows closely what was done to obtain the formula \eqref{eq-cor-Dal} in Corollary \ref{cor-Dal}. For any integer $N>1$, set 
\beq
\label{tildeEN}
\begin{aligned}
 E_N(\alpha)\equiv&\int_1^{2\pi N}y^{\alpha} q(y)dy=
\int_1^{2\pi}y^{\alpha} q(y)dy+
\sum_{k=1}^{N-1}\int_{2\pi k}^{2\pi (k+1)}y^{\alpha} q(y)dy\\
=&\frac12\int_1^{2\pi}y^{\alpha}(y-\pi)dy+
\sum_{k=1}^{N-1}\frac12\int_{2\pi k}^{2\pi (k+1)}y^{\alpha}(y-(2k+1)\pi)dy\\
=&\frac12\frac1{\al+2}[(2\pi)^{\al+2}-1]-\frac\pi2\frac{1}{\al+1}[(2\pi)^{\al+1}-1]\\
&+\sum_{k=1}^{N-1}\bigg\{\frac12\frac{1}{\al+2}[(2\pi (k+1))^{\al+2}-(2\pi k)^{\al+2}]
-\frac\pi2\frac{(2k+1)}{\al+1}[(2\pi (k+1))^{\al+1}-(2\pi k)^{\al+1}]
\bigg\}\\
=&\frac12\frac1{\al+2}[(2\pi N)^{\al+2}-1]-\frac\pi2\frac{1}{\al+1}[(2\pi)^{\al+1}-1]-\sum_{k=1}^{N-1}\frac\pi2\frac{(2k+1)}{\al+1}[(2\pi (k+1))^{\al+1}-(2\pi k)^{\al+1}].
\end{aligned}
\eeq
Since
\[
\begin{aligned}
&\sum_{k=1}^{N-1}\frac\pi2\frac{(2k+1)}{\al+1}[(2\pi (k+1))^{\al+1}-(2\pi k)^{\al+1}]
=\frac\pi2\frac{(2\pi)^{\al+1}}{\al+1}\sum_{k=1}^{N-1}(2k+1)[(k+1)^{\al+1}-k^{\al+1}]\\
=&\frac\pi2\frac{(2\pi)^{\al+1}}{\al+1}\bigg[
2\sum_{k=1}^{N-1}(k+1)^{\al+2}-
\sum_{k=1}^{N-1}(k+1)^{\al+1}
-2\sum_{k=1}^{N-1}k^{\al+2} -\sum_{k=1}^{N-1}k^{\al+1}\bigg]\\
=&\frac\pi2\frac{(2\pi)^{\al+1}}{\al+1}\bigg[
2[N^{\al+2}-1]-
\sum_{k=2}^{N}k^{\al+1}
-\sum_{k=1}^{N}k^{\al+1}+N^{\al+1}
\bigg]
\\
=&\frac\pi2\frac{(2\pi)^{\al+1}}{\al+1}\bigg[
2N^{\al+2}+N^{\al+1}
-1-2\sum_{k=1}^{N}k^{\al+1}\bigg].
\end{aligned}
\]
We use the formula \eqref{tit411}, see, e.g., \cite{tit}, as we did in the proof of Lemma \ref{lem-limDNal}, and obtain
\[
\begin{aligned}
&\sum_{k=1}^{N-1}\frac\pi2\frac{(2k+1)}{\al+1}[(2\pi (k+1))^{\al+1}-(2\pi k)^{\al+1}]\\
=&\frac\pi2\frac{(2\pi)^{\al+1}}{\al+1}\bigg[
2N^{\al+2}+N^{\al+1}
-1-2\zeta(-\al-1)-2\frac{N^{\al+2}}{\al+2}+O(N^{\Re (\al+1)})\bigg]\\
=&\frac\pi2\frac{(2\pi)^{\al+1}}{\al+1}\bigg[
2\frac{\al+1}{\al+2}N^{\al+2}
-1-2\zeta(-\al-1)+O(N^{\Re (\al+1)})\bigg],
\end{aligned}
\]
which holds true uniformly for $\Re (\al)<-1$, $|\Im \al|<2\pi N/C$, where $C$ is a given constant greater than $1$.

We use the latter equality in Eq. \eqref{tildeEN} for $ E_N(\al)$ and obtain
\[
 E_N(\alpha)=
-\frac12\frac1{\al+2}+\frac\pi2\frac{1}{\al+1}
+\frac12\frac{(2\pi)^{\al+2}}{\al+1}
\zeta(-\al-1)+O(N^{\Re (\al+1)}).
\]
The statement follows from
\[
 E(\al)=
\lim_{N\to\infty} E_N(\al)=
-\frac12\frac1{\al+2}+\frac\pi2\frac{1}{\al+1}
+\frac12\frac{(2\pi)^{\al+2}}{\al+1}
\zeta(-\al-1)
\]
for all $\al<-1$.
\end{proof}
\begin{cor}
The following integral representation for $\zeta$ in terms of the function $ E$ defined by \eqref{tildeq}, \eqref{tildeEal}  holds, for all $\Real s>0$: 
\beq
\label{zetaE}
\zeta(s)=\frac{2s}{(2\pi)^{1-s}}
\bigg[-
\frac{\pi}{2}\frac{1}{s}-\frac{1}{2}\frac{1}{1-s}
- E(-1-s)
\bigg]
\eeq
\end{cor}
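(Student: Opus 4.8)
The plan is to obtain \eqref{zetaE} by a direct substitution into the formula of Proposition \ref{prop-tildeE}, exactly as Theorem \ref{t.3.1} was deduced from Corollary \ref{cor-Dal}. First I would take the identity
\[
E(\alpha)=\frac{\pi}{2}\frac{1}{\alpha+1}-\frac{1}{2}\frac{1}{\alpha+2}+\frac{(2\pi)^{2+\alpha}}{2(\alpha+1)}\zeta(-1-\alpha),
\]
valid for $\Re\alpha<-1$, and set $\alpha=-1-s$. Since $\Re s>0$ is equivalent to $\Re\alpha=-1-\Re s<-1$, Proposition \ref{prop-tildeE} applies throughout the claimed range.

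Next I would simplify using $\alpha+1=-s$, $\alpha+2=1-s$, $2+\alpha=1-s$ and $-1-\alpha=s$, which turns the displayed identity into
\[
E(-1-s)=-\frac{\pi}{2}\frac{1}{s}-\frac{1}{2}\frac{1}{1-s}-\frac{(2\pi)^{1-s}}{2s}\zeta(s).
\]
Solving this linear relation for $\zeta(s)$ — multiplying through by $-2s/(2\pi)^{1-s}$, which is permissible because $(2\pi)^{1-s}\neq0$ always and $s\neq0$ for $\Re s>0$ — yields exactly
\[
\zeta(s)=\frac{2s}{(2\pi)^{1-s}}\bigg[-\frac{\pi}{2}\frac{1}{s}-\frac{1}{2}\frac{1}{1-s}-E(-1-s)\bigg],
\]
which is \eqref{zetaE}.

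Every step here is a purely algebraic rearrangement, so there is no real obstacle. The only points deserving (minimal) attention are the bookkeeping check that $\alpha=-1-s$ maps the hypothesis $\Re\alpha<-1$ of Proposition \ref{prop-tildeE} onto the full half-plane $\Re s>0$, and the observation that the factor one divides by is nonzero there; both are immediate.
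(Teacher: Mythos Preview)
Your proposal is correct and is exactly the approach the paper takes: the paper's proof is the single line ``This is an immediate consequence of Proposition \ref{prop-tildeE},'' and your substitution $\alpha=-1-s$ together with the trivial algebraic rearrangement spells out precisely that immediate consequence.
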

\begin{proof}
This is an immediate consequence of Proposition \ref{prop-tildeE}.
\end{proof}

\begin{rem}\label{rem.4.4}
One could use \eqref{zetaE} to deduce  explicit zero-free regions for
$\zeta(s)$, similarly as  in Remark \ref{r.3.3}.
Moreover from \eqref{zetaE}
and 
 $| q(y)|\leqslant \pi/2$ we get
\[
\big| E(-1-u-iv)\big|=\bigg|\int_1^{+\infty}y^{-1-u-iv} q(y)dy\bigg|
\leqslant \int_1^{+\infty}y^{-1-u}| q(y)|dy\leqslant
\frac{\pi}{2}\frac1{u}.
\]
From this and \eqref{zetaE} we deduce  the following simple explicit bound, for all $\Real s>0$:
\label{r.4.3}
\beq
\label{zetaE-2}
\begin{aligned}
|\zeta(u+iv)|\leqslant&
\frac{1}{(2\pi)^{1-u}}
\bigg[\pi+\frac{|u+iv|}{|1-u-iv|}
+2|u+iv|\big| E(-1-u-iv)\big|\bigg]\\
\leqslant&
\frac{1}{(2\pi)^{1-u}}
\bigg[\pi+\frac{u+|v|}{|v|}
+\pi\frac{(u+|v|)}{u}
\bigg]=\frac{1}{(2\pi)^{1-u}}
\bigg[\frac\pi{u}|v|+2\pi+1+\frac{u}{|v|}
\bigg]
\end{aligned}
\eeq

For $u\geqslant\frac12$ our  bound is of the type  given in \cite[2.12.2]{tit} (namely $\zeta(s)=O(|v|)$, for $u\geqslant \frac12$).
To improve our bound one should exploit the oscillatory nature of the integrand, which would require a separate analysis. 

For other bounds on $|\zeta(s)|$ see, e.g.,  \cite{Edw74} (p.184), \cite{KV92}
(pp. 116-118, 125), \cite{jam} (pp.  38,104,106,113,196),
\cite{tit} (p. 113).

\end{rem}

\begin{rem}
By comparison of Eqs. \eqref{zetaD} and \eqref{zetaE} we have
\[
\widetilde D(-2-s)=\frac{1}{1+s}\bigg[\frac14(1-\pi)^2+ E(-1-s)\bigg]
\]
for any $\Re s>0$. The same result can be obtained directly from the definition \eqref{widetildeD} of $\widetilde D(\al)$, by splitting the integral  into the domains $[2\pi k,2\pi (k+1))$ and integrating  by parts. 
\end{rem}
Whereas the representation of $\zeta(s)$ given by Theorem \ref{t.3.1} involves $D(-2-s)$, which is built with the function $p(x)$ which is quadratic in the fundamental domain $[0,2\pi)$, the one given by Remark \ref{r.4.3} involves $ E(-1-s)$, which is built with the function $q(x)$ which is linear in the fundamental domain $[0,2\pi)$ (being the derivative of $p$). The next considerations  will involve a function $f(x)$ which is constant in the fundamental domain. 

Set 
\[
f(x)\equiv (-1)^n\qquad x\in[2\pi n,2\pi (n+1)),\ n\in\N_0
\]
and for all $\Re\al<-1$
\beq
\label{F}
F(\al)\equiv \int_1^{+\infty} y^\al f(y)dy .
\eeq
Then we have
\begin{prop}
\label{prop-F}
Let $F(\alpha)$ be as in \eqref{F}, then for $\Real\alpha<-1$:
\beq
F(\al)=-\frac1{\al+1}+
2\frac{(2\pi)^{\al+1}}{\al+1}
(1-2^{2+\al})\zeta(-\al-1).
\eeq
\end{prop}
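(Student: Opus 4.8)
The plan is to mimic exactly the scheme used for Corollary~\ref{cor-Dal} and Proposition~\ref{prop-tildeE}. Introduce the truncation
\[
F_N(\al)\equiv\int_1^{2\pi N}y^{\al}f(y)\,dy,\qquad N\in\NN,\ N>1,
\]
and split the domain as $[1,2\pi)\cup\bigcup_{k=1}^{N-1}[2\pi k,2\pi(k+1))$. Since $f\equiv1$ on $[1,2\pi)$ and $f\equiv(-1)^k$ on $[2\pi k,2\pi(k+1))$, using
\[
\int_1^{2\pi}y^{\al}\,dy=\frac{(2\pi)^{\al+1}-1}{\al+1},\qquad
\int_{2\pi k}^{2\pi(k+1)}y^{\al}\,dy=\frac{(2\pi)^{\al+1}}{\al+1}\bigl[(k+1)^{\al+1}-k^{\al+1}\bigr],
\]
one gets
\[
F_N(\al)=\frac{(2\pi)^{\al+1}-1}{\al+1}
+\frac{(2\pi)^{\al+1}}{\al+1}\sum_{k=1}^{N-1}(-1)^k\bigl[(k+1)^{\al+1}-k^{\al+1}\bigr].
\]

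Next I would evaluate the alternating telescoping-type sum. Shifting the index in the ``$(k+1)$'' part and recombining terms --- the alternating analogue of the identities \eqref{sumk}, \eqref{sumk2} --- yields
\[
\sum_{k=1}^{N-1}(-1)^k\bigl[(k+1)^{\al+1}-k^{\al+1}\bigr]
=-2\sum_{k=1}^{N}(-1)^k k^{\al+1}-1+(-1)^N N^{\al+1}.
\]
For $\Re\al<-1$ we have $\Re(\al+1)<0$, so the boundary term $(-1)^N N^{\al+1}\to0$, while the sequence $k^{\al+1}$ is of bounded variation ($\sum_k|k^{\al+1}-(k+1)^{\al+1}|\le|\al+1|\sum_k k^{\Re\al}<\infty$) and tends to $0$; hence by summation by parts $\sum_{k\ge1}(-1)^{k-1}k^{\al+1}$ converges to the Dirichlet eta function at $-\al-1$. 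Using the classical identity $\eta(s)=(1-2^{1-s})\zeta(s)$, valid for $\Re s>0$ by analytic continuation (here $\Re(-\al-1)>0$), this gives
\[
\sum_{k=1}^{\infty}(-1)^k k^{\al+1}=-\eta(-\al-1)=-(1-2^{\al+2})\zeta(-\al-1).
\]

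Finally, since $|f|\le1$ the integral defining $F(\al)$ converges absolutely for $\Re\al<-1$, so $F(\al)=\lim_{N\to\infty}F_N(\al)$; passing to the limit in the formula for $F_N(\al)$ produces
\[
F(\al)=\frac{(2\pi)^{\al+1}-1}{\al+1}
+\frac{(2\pi)^{\al+1}}{\al+1}\Bigl[2(1-2^{\al+2})\zeta(-\al-1)-1\Bigr]
=-\frac{1}{\al+1}+\frac{2(2\pi)^{\al+1}}{\al+1}(1-2^{\al+2})\zeta(-\al-1),
\]
which is the assertion. The only genuinely delicate point is that for $-2\le\Re\al<-1$ one must read $\zeta(-\al-1)$ as the analytically continued zeta (at $\al=-2$ the factor $1-2^{\al+2}$ vanishes and cancels the pole of $\zeta$ at $1$), exactly as in Proposition~\ref{prop-tildeE}; correspondingly, the sum over $k$ is only conditionally convergent and must not be split into the sums over even and odd $k$, which individually diverge for $\Re\al\ge-2$ --- it is precisely the eta function that carries the cancellation. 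Alternatively, to stay strictly parallel to the proof of Proposition~\ref{prop-tildeE}, one can replace the summation-by-parts argument by the asymptotic $\sum_{k=1}^N(-1)^{k-1}k^{-s}=\eta(s)+O(N^{-\Re s})$ for alternating partial sums, playing here the role of \eqref{tit411}.
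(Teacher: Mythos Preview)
Your proof is correct and follows essentially the same route as the paper's: truncate to $F_N(\al)$, split $[1,2\pi N)$ into $[1,2\pi)\cup\bigcup_{k=1}^{N-1}[2\pi k,2\pi(k+1))$, collapse the alternating telescoping sum by an index shift, and pass to the limit using $\sum_{k\ge1}(-1)^{k-1}k^{\al+1}=(1-2^{2+\al})\zeta(-\al-1)$ (the paper simply cites \cite[Sec.~2.2]{tit} for this last identity). Your additional remarks on the conditional convergence for $-2\le\Re\al<-1$ via summation by parts are a welcome clarification that the paper leaves implicit.
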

\begin{proof}
For any integer $N\in\N$, set 
\beq
\label{FN}
\begin{aligned}
F_N(\alpha)\equiv&\int_1^{2\pi N}y^{\alpha}f(y)dy=
\int_1^{2\pi}y^{\alpha}dy+
\sum_{k=1}^{N-1}(-1)^k\int_{2\pi k}^{2\pi (k+1)}y^{\alpha}dy\\
=&\frac1{\al+1}[(2\pi)^{\al+1}-1]+\sum_{k=1}^{N-1}(-1)^k\frac{1}{\al+1}[(2\pi (k+1))^{\al+1}-(2\pi k)^{\al+1}].
\end{aligned}
\eeq
We notice that
\[
\begin{aligned}
&\sum_{k=1}^{N-1}(-1)^k\frac{1}{\al+1}[(2\pi (k+1))^{\al+1}-(2\pi k)^{\al+1}]
=\frac{(2\pi)^{\al+1}}{\al+1}\bigg[\sum_{k=2}^{N}(-1)^{k-1}k^{\al+1}-
\sum_{k=1}^{N-1}(-1)^k k^{\al+1}\bigg]\\
=&\frac{(2\pi)^{\al+1}}{\al+1}\bigg[-1+\sum_{k=1}^{N}(-1)^{k-1}k^{\al+1}+
\sum_{k=1}^{N-1}(-1)^{k-1} k^{\al+1}\bigg]\\
=&
\frac{(2\pi)^{\al+1}}{\al+1}\bigg[-1+2\sum_{k=1}^{N}(-1)^{k-1}k^{\al+1}
-(-1)^{N-1}N^{\al+1}\bigg].
\end{aligned}
\]
Using the latter equality in Eq. \eqref{FN} for $F_N(\al)$ we obtain
\[
\begin{aligned}
F_N(\alpha)=&
\frac1{\al+1}[(2\pi)^{\al+1}-1]+
\frac{(2\pi)^{\al+1}}{\al+1}\bigg[-1+2\sum_{k=1}^{N}(-1)^{k-1}k^{\al+1}
-(-1)^{N-1}N^{\al+1}\bigg]\\
=&-\frac1{\al+1}+
\frac{(2\pi)^{\al+1}}{\al+1}\bigg[2\sum_{k=1}^{N}(-1)^{k-1}k^{\al+1}
-(-1)^{N-1}N^{\al+1}\bigg].
\end{aligned}
\]
Since for all $\Re \al<-1$ the series on the right hand side of the latter equation converges and $\lim_{N\to\infty}F_N(\al)=F(\al)$, then 
\[
F(\al)=\lim_{N\to\infty}
F_N(\al)=-\frac1{\al+1}+
2\frac{(2\pi)^{\al+1}}{\al+1}\sum_{k=1}^{\infty}(-1)^{k-1}k^{\al+1}
\]
for any $\Re \al<-1$. The statement follows from
\beq
\label{e.4.11a}
\sum_{k=1}^\infty(-1)^{k-1}k^{\al+1}
=(1-2^{2+\al})\zeta(-\al-1)
\eeq
for any $\Re\al<-1$, see \cite[Sec. 2.2]{tit}.
\end{proof}
\begin{cor}\label{cor.4.7}
From Proposition \ref{prop-F} we get the integral representation for the $\zeta$-function in terms of $F$: 
\beq
\zeta(s)
=\frac{1}2\,\frac{(2\pi)^{s}}{1-2^{1-s}}-\frac{s}2\,\frac{(2\pi)^{s}}{1-2^{1-s}}F(-1-s),
\eeq
for any $\Re s>0$. 
\end{cor}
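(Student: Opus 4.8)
The plan is to read off the identity directly from Proposition \ref{prop-F} by the substitution $\alpha=-1-s$ and then solve the resulting linear relation for $\zeta(s)$. First I would check that the substitution is admissible: since $\Re\alpha<-1$ is equivalent to $\Re s>0$, Proposition \ref{prop-F} applies for every $s$ in the half-plane $\Re s>0$, and we record the elementary identities $\alpha+1=-s$, $\;2+\alpha=1-s$ (so $2^{2+\alpha}=2^{1-s}$), and $-\alpha-1=s$. Plugging these into the formula of Proposition \ref{prop-F} gives
\[
F(-1-s)=\frac{1}{s}-\frac{2\,(2\pi)^{-s}}{s}\bigl(1-2^{1-s}\bigr)\zeta(s),\qquad \Re s>0.
\]

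Next I would solve this for $\zeta(s)$. Multiplying through by $-\dfrac{s\,(2\pi)^{s}}{2(1-2^{1-s})}$ and rearranging yields
\[
\zeta(s)=\frac12\,\frac{(2\pi)^{s}}{1-2^{1-s}}-\frac{s}{2}\,\frac{(2\pi)^{s}}{1-2^{1-s}}\,F(-1-s),
\]
which is exactly the asserted representation. This is all routine algebra, so the only point requiring a word of care — and the only "obstacle" — is the division by $1-2^{1-s}$: this factor vanishes precisely on the discrete set $s=1-\tfrac{2\pi i k}{\log 2}$, $k\in\Z$, which lies in $\Re s>0$. One resolves this by observing that for such $s$ the factor $1-2^{1-s}$ on the right-hand side of Proposition \ref{prop-F} is $0$, forcing $F(-1-s)=1/s$ there, so the displayed identity between $\zeta(s)$ and the stated expression holds off this discrete set; since both sides are meromorphic in $\Re s>0$, the equality extends to all of $\Re s>0$ by analytic continuation. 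Alternatively one may simply state the formula for $\Re s>0$ with $2^{1-s}\neq1$, as the corollary is intended.

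I do not expect any genuine difficulty here: the content is entirely contained in Proposition \ref{prop-F}, and the corollary is an immediate rewriting. If one wanted to be thorough, the computation $\alpha\mapsto -1-s$ together with the bookkeeping of the three constants above is the only thing to verify carefully.
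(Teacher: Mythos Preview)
Your proposal is correct and follows the same route as the paper, which simply states that the corollary is an immediate consequence of Proposition~\ref{prop-F}; you have merely spelled out the substitution $\alpha=-1-s$ and the ensuing algebra. Your extra remark on the discrete set where $1-2^{1-s}=0$ is a welcome point of care that the paper leaves implicit.
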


\begin{rem}
This representation yields  the simple bound 
\[
\begin{aligned}
|\zeta(u+iv)|\leqslant&
\frac{(2\pi)^u}{2}\frac{1}{\sqrt{1+2^{2-2u}-2^{2-u}\cos(v\ln2)}}\Big[1+|s||F(-1-s)|\Big]\\
\leqslant&\frac{(2\pi)^u}{2}\frac{1}{\sqrt{1+2^{2-2u}-2^{2-u}\cos(v\ln2)}}\Big[1+\frac{|s|}{u}\Big],
\end{aligned}
\]
where in the latter step we used $|F(-1-s)|\leqslant \int_1^{+\infty}y^{-1-u}dy=\frac{1}{u}$. 
The growth in $|s|$ of the bound is similar to the one obtained in Remark \ref{r.4.3}.
To sharpen the bound one should again exploit the oscillations of the integrand in $F(-1-s)$.

We also remark that using \eqref{e.4.11a} we get from Corollary \ref{cor.4.7}, for $\Real s>0$:
\[
\sum_{n=1}^{+\infty}\frac{(-1)^{n-1}}{n^s}=\frac{s}{2}(2\pi)^s\bigg[\frac{1}{s}-F(-1-s)\bigg],
\]
which is the known representation of the $\zeta$-function as a convergent alternating series, see, e.g., \cite{tit} Sect. 2.2.
\end{rem}

\section{Relations with the M\"untz formula} 
\label{sec5}

M\"untz's formula (see \cite{lew81} and\cite{tit} (p. 29)) relates the Mellin transform of a modified theta transform of a test function $f$ with the product of the Mellin transform of the test function and the $\zeta$-function.
This formula is presented here in a generalized form, assuming only that $f$ and its Fourier transform $\mf(f)$ are in $L^1(\R)$, see next Proposition \ref{s2p1}.
Further in this section we present an $s\to 1-s$ symmetrized version (under a stronger condition on $f$ and $\mf(f)$) of this generalized M\"untz formula (Proposition \ref{s2p3}). We then relate M\"untz formula for a certain choice $f=\phi$ of $f$ to our integral representation for the $\zeta$-function given by Theorem \ref{t.3.1} (this is the content of Remark \ref{r.5.11})

\begin{prop}[Generalized M\"untz formula]
\label{s2p1}
Let $f \in L^1(\R)$, and assume that $\mf(f)\in L^1(\R)$, $\mf(f)(x)\equiv\int_{\R}e^{2\pi ixy}f(y)dy$ being the Fourier transform of $f$. Then:
\begin{itemize}
\item[1)] \begin{align*}
                \Theta_N(f)(x)\equiv\sum_{n=1}^{N}f(nx)
            \end{align*}
            converges absolutely for all $x>0$ to
            \begin{align*}
                \Theta(f)(x)\equiv\sum_{n=1}^{\infty}f(nx).
            \end{align*}
\item[2)] For any $0<\operatorname{Re}(s)<1$ the following statements hold:
\begin{itemize}
            \item[a)] the Mellin transform of $f$
                        \begin{align*}
                            M(f)(s)\equiv\int_0^{\infty}x^{s-1}f(x)dx
                        \end{align*}
                       exists in the sense of Lebesgue integrals;
\item[b)] define $\check{\Theta}(f)$ by
                        \begin{align*}
                            \check{\Theta}(f)(x)\equiv\Theta(f)(x)-
\frac{1}{x}\int_0^\infty f(y)dy,\ 
x>0,
                        \end{align*}
                        then
                        the Mellin transform of $\check{\Theta}(f)$ exists
                        in the sense of Lebesgue integrals and one has
                        \begin{align*}
                            M(f)(s)\zeta(s)=M(\check{\Theta}(f))(s).
                        \end{align*}
\end{itemize}
\end{itemize}
\end{prop}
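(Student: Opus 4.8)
The plan is to prove the generalized M\"untz formula in three stages, following the classical scheme but being careful about convergence under the weak hypotheses $f,\mf(f)\in L^1(\R)$.

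\textbf{Step 1: absolute convergence of $\Theta_N(f)$.} First I would exploit that $\mf(f)\in L^1(\R)$ forces $f$ to agree a.e.\ with a bounded continuous function (Riemann--Lebesgue / Fourier inversion), so $f(x)\to 0$ as $|x|\to\infty$; more usefully, I would extract decay of $f$ at infinity. Since $\mf(f)\in L^1$, Fourier inversion gives $f(y)=\int_\R e^{-2\pi i xy}\mf(f)(x)\,dx$ pointwise. The issue is that mere boundedness of $f$ does not make $\sum_n f(nx)$ converge, so I need an integrable bound. The standard device: write $f(nx)$ via inversion and sum in $n$; alternatively, and more robustly, use that for fixed $x>0$ the Poisson-type estimate gives $\sum_{n\ge 1}|f(nx)|\le \frac1x\int_0^\infty|f(y)|\,dy+\text{(correction)}$ only if $f$ is monotone, which we do not have. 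The cleaner route is: $f\in L^1$ and $\mf(f)\in L^1$ imply (by a known lemma, e.g.\ in Titchmarsh or via the Poisson summation machinery) that $\sum_{n\in\Z}|f(x+n)|$ is finite for a.e.\ $x$ and locally integrable; rescaling handles $\sum_n|f(nx)|$ for each $x>0$. I would state this carefully and cite the relevant lemma, since this is the technical heart of part 1).

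\textbf{Step 2: existence of $M(f)(s)$ for $0<\Re s<1$.} Split $\int_0^\infty x^{s-1}|f(x)|\,dx=\int_0^1+\int_1^\infty$. On $[1,\infty)$, $|x^{s-1}|=x^{\Re s-1}\le 1$ and $f\in L^1$, so that piece is finite. On $(0,1]$, $|x^{s-1}|=x^{\Re s-1}$ with $\Re s-1>-1$, so $x^{s-1}\in L^1(0,1)$, and since $f$ is bounded (from Step 1), the product is integrable. Hence $M(f)(s)$ exists as a Lebesgue integral, giving 2a).

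\textbf{Step 3: the Mellin identity $M(f)(s)\zeta(s)=M(\check\Theta(f))(s)$.} I would compute, for $0<\Re s<1$,
\[
M(\Theta(f))(s)=\int_0^\infty x^{s-1}\sum_{n=1}^\infty f(nx)\,dx
=\sum_{n=1}^\infty\int_0^\infty x^{s-1}f(nx)\,dx
=\sum_{n=1}^\infty n^{-s}M(f)(s)=\zeta(s)M(f)(s),
\]
the interchange of sum and integral being justified by the absolute-convergence/local-integrability established in Step 1 together with Step 2 (dominated convergence, or Tonelli on $|f|$). However $\Theta(f)$ itself is typically not Mellin-transformable at $\Re s<1$ because $\Theta(f)(x)\sim \frac1x\int_0^\infty f$ as $x\to 0^+$ (this is exactly the $n\to\infty$ averaging), which makes $x^{s-1}\Theta(f)(x)$ non-integrable near $0$ when $\Re s<1$. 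This is why one subtracts the singular part: $\check\Theta(f)(x)=\Theta(f)(x)-\frac1x\int_0^\infty f(y)\,dy$ is integrable against $x^{s-1}$ near $0$, while $\frac1x\int_0^\infty f$ has no Mellin transform at all in this strip (its formal transform $\zeta(s)\cdot 0$ vanishes in a regularized sense), so I would instead argue directly: show $M(\check\Theta(f))(s)$ exists, and that on the strip the subtracted term contributes nothing, using $\int_0^\infty x^{s-2}\,dx$ being interpreted via analytic continuation / the fact that the genuine convergent computation above is for $\sum_{n=1}^\infty$ and the tail behaviour is precisely the subtracted singularity. Concretely I would split the $x$-integral at $1$, handle $\int_1^\infty x^{s-1}\Theta(f)(x)\,dx$ directly (here $\Theta(f)\in L^1(1,\infty)$ by Step 1 and $x^{s-1}$ bounded), and on $(0,1)$ write $\int_0^1 x^{s-1}\check\Theta(f)(x)\,dx$, add and subtract, and match against $\zeta(s)M(f)(s)$ using the termwise identity valid for $\Re s>1$ followed by analytic continuation in $s$ of both sides (both are holomorphic on $0<\Re s<1$: the left because $M(f)$ is, $\zeta$ is; the right because $M(\check\Theta(f))$ is, once its existence is shown).

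\textbf{Main obstacle.} The delicate point is \emph{not} the formal manipulation but justifying (i) that $\sum_n|f(nx)|<\infty$ for every $x>0$ under only $f,\mf(f)\in L^1$ — this needs a genuine lemma about Fourier pairs, not just Riemann--Lebesgue — and (ii) the legitimacy of the $\sum$--$\int$ interchange and the handling of the non-integrable singular term $\frac1x\int f$, i.e.\ showing the subtraction is exactly what is needed and that the resulting $M(\check\Theta(f))(s)$ equals $\zeta(s)M(f)(s)$ on the open strip rather than only formally. I expect to control (ii) by first proving the identity for $\Re s>1$ (where everything converges absolutely and $\check\Theta$ vs $\Theta$ issues are milder after subtraction near $0$) and then invoking the identity theorem for holomorphic functions on $0<\Re s<1$.
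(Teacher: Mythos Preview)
The paper does not give a self-contained proof: it simply cites M\"untz's original theorem together with Burnol (\cite{bur05}, Prop.~3.15) for the extension to the weak hypotheses $f,\mf(f)\in L^1$, noting that Burnol supplies the ``almost sure convergence of the series'' in $\check\Theta(f)(x)$. Your proposal is therefore considerably more ambitious than what the paper actually does, and your diagnosis of the difficulties---especially that the pointwise absolute convergence of $\sum_n|f(nx)|$ is the real technical crux and requires an external lemma---is exactly right. (Note that the paper's own proof speaks only of a.e.\ convergence via Burnol, so the ``for all $x>0$'' in the statement may be slightly optimistic; either way this is where one leans on the cited result.) Your Step~2 is correct as written.

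The genuine gap is in Step~3. The analytic-continuation route from $\Re s>1$ does not work as you describe it, because there is no common strip where both sides live as absolutely convergent Mellin integrals. For $\Re s>1$ the transform $M(\Theta(f))(s)$ exists (the $c/x$ singularity at $0$ is integrable there) and equals $\zeta(s)M(f)(s)$, but $M(\check\Theta(f))(s)$ does \emph{not}: the subtracted term $-c/x$ dominates at infinity and $x^{s-1}\cdot c/x$ is non-integrable on $[1,\infty)$ when $\Re s\ge 1$. Conversely, for $0<\Re s<1$ only $M(\check\Theta(f))(s)$ exists. With no overlap, you cannot invoke the identity theorem between these two objects. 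The classical remedy (Titchmarsh \S2.11, and what Burnol makes rigorous under the weak hypotheses) is to work \emph{directly} in the strip $0<\Re s<1$: truncate the sum at $N$, replace the subtraction by its Riemann-sum analogue $\tfrac1x\int_0^{Nx}f$, compute the resulting Mellin transform exactly as $\zeta_N(s)M(f)(s)$ plus an explicit remainder, and pass to the limit $N\to\infty$. Your instinct to split the $x$-integral at $1$ is a reasonable first move, but the ``match via analytic continuation'' step must be replaced by this direct limiting computation in the strip.
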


\begin{proof}
This is a consequence of a theorem of M\"untz \cite{mun} (see also \cite[pp. 28 - 29, 2.11]{tit}),
combined with the absolute convergence of the integrals and almost sure
 convergence of the series in $\check{\Theta}(f)(x)$  being made clear
 in the work by Burnol \cite[Sect. 3.2 Prop. 3.15]{bur05}, to which we also refer for more details. Note that the assumptions are weaker than in \cite{mun} but are covered by the result of \cite{bur05}. 
\end{proof}

\begin{rem}
\label{rem22}
M\"untz formula has been analyzed, extended and
exploited in very interesting recent work by J.-F. Burnol, see, e.g.
\cite{bur, burn3, bur03, bur05, burn2}, and B\'aez-Duarte, see, e.g., \cite{baez}. It has also been exploited (independently of above
work) for the study of zero-free regions of $\zeta$ in \cite{albce}.
In algebraic contexts similar formulae appear in pioneering work by
Tate \cite{tat} and, more recently, in work by Connes
\cite{conn} (see also, e.g. \cite{cm07a}) and Meyer \cite{mey}.
\end{rem}

We shall now rewrite M\"untz's formula in a way which exploits better the intrinsic
basic symmetry with respect to $s\to 1-s$ used classically for deriving
the functional equation (which one obtains in the particular case where
$f(x)=e^{-\pi x^2}$). For this one uses Poisson's summation formula:

\begin{prop}[Poisson summation formula]
\label{s2p2} Let $f\in L^1(\R)$  and $\mf(f)$  (defined as in Prop. \ref{s2p1}) be
continuous and assume they satisfy
\[
|f(x)|+|\mf(f)(x)|\leqslant\frac{c}{(1+|x|)^{1+\delta}}
\]
for some $c,\delta>0$, and all $x\in\R$. Then for any $a, b>0$ such that $ab = 2\pi$ we have
\[
\sqrt{a} \sum_{k \in \Z} f(ak)= \sqrt{b} \sum_{k \in
\Z} \widehat{f}(bk),
\]
with $\widehat{f}(y)\equiv \frac{1}{\sqrt{2\pi}}\int_{\R}
f(x)e^{-iyx}dx$. Both series are absolutely convergent and converge uniformly to a continuous
function of $a$ resp. $b$.
\end{prop}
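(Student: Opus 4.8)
The plan is to deduce the stated identity from the classical Poisson summation formula applied to a suitably rescaled version of $f$, and then to track how the Fourier transform transforms under dilations. First I would fix $a,b>0$ with $ab=2\pi$ and set $g(x)\equiv f(ax)$. The hypothesis $|f(x)|+|\mf(f)(x)|\leqslant c(1+|x|)^{-1-\delta}$ guarantees that $g$ and $\widehat g$ (in the $\frac{1}{\sqrt{2\pi}}$-normalized convention used in the statement) are both continuous and satisfy a decay bound of the same qualitative type, hence both $\sum_{k\in\Z}g(k)$ and $\sum_{k\in\Z}\widehat g(k)$ are absolutely convergent. I would then invoke the classical Poisson summation formula in the form $\sum_{k\in\Z}g(k)=\sqrt{2\pi}\sum_{k\in\Z}\widehat g(2\pi k)$ valid for such $g$; this is standard (see e.g. the references on Fourier analysis), and the hypotheses of Proposition \ref{s2p2} are exactly tailored to make it applicable.

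The next step is the bookkeeping of the dilation in the Fourier transform. With $\widehat{f}(y)= \frac{1}{\sqrt{2\pi}}\int_{\R} f(x)e^{-iyx}dx$ one computes, by the change of variables $x\mapsto x/a$,
\[
\widehat g(y)=\frac{1}{\sqrt{2\pi}}\int_{\R} f(ax)e^{-iyx}dx
=\frac{1}{a}\,\frac{1}{\sqrt{2\pi}}\int_{\R} f(x)e^{-iyx/a}dx
=\frac{1}{a}\,\widehat f\!\left(\frac{y}{a}\right).
\]
Substituting this into $\sum_{k\in\Z}g(k)=\sqrt{2\pi}\sum_{k\in\Z}\widehat g(2\pi k)$ gives
\[
\sum_{k\in\Z} f(ak)=\sqrt{2\pi}\sum_{k\in\Z}\frac{1}{a}\,\widehat f\!\left(\frac{2\pi k}{a}\right)
=\frac{\sqrt{2\pi}}{a}\sum_{k\in\Z}\widehat f(bk),
\]
using $2\pi/a=b$. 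Multiplying both sides by $\sqrt{a}$ and noting $\sqrt{2\pi}/\sqrt{a}=\sqrt{b}$ (again from $ab=2\pi$) yields the claimed identity $\sqrt{a}\sum_{k\in\Z} f(ak)=\sqrt{b}\sum_{k\in\Z}\widehat f(bk)$. The relation between $\widehat f$ and $\mf(f)$ (namely $\widehat f(y)=\frac{1}{\sqrt{2\pi}}\mf(f)(-y/(2\pi))$) shows the decay hypothesis on $\mf(f)$ indeed controls $\widehat f$ and hence $\widehat g$.

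For the final assertion about uniform convergence and continuity in $a$ resp. $b$: I would argue that on any compact subset of $(0,\infty)$ the decay bound on $f$ gives a bound $|f(ak)|\leqslant c(1+|ak|)^{-1-\delta}\leqslant c'(1+|k|)^{-1-\delta}$ uniform in $a$ over that compact set, so the series $\sum_k f(ak)$ converges uniformly by the Weierstrass $M$-test; since each term is continuous in $a$, the sum is continuous in $a$. The same reasoning applies to $\sqrt{b}\sum_k\widehat f(bk)$ as a function of $b$. I do not expect any serious obstacle here — the content is entirely the classical Poisson formula plus elementary scaling; the only point requiring a little care is matching the $\frac{1}{\sqrt{2\pi}}$-normalization of $\widehat f$ in the statement with whatever normalization one uses for the classical Poisson identity, and checking that the summability hypotheses transfer correctly to the rescaled function $g$ and its transform.
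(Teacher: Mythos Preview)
Your argument is correct and follows essentially the same route as the paper: the paper's proof simply cites the classical Poisson summation formula from Stein--Weiss (Corollary 2.6), applied with $f(x)$ replaced by $\sqrt{a}\,f(ax)$, which is exactly your rescaling $g(x)=f(ax)$ up to the harmless constant factor $\sqrt{a}$. Your additional bookkeeping of the dilation in the $\tfrac{1}{\sqrt{2\pi}}$-normalization and the Weierstrass $M$-test argument for uniform convergence just make explicit what the paper leaves to the cited references.
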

\begin{proof}
$f\in L^1(\R)$ implies that $\mf(f)$ exist. The rest follows from
e.g., \cite[Corollary 2.6, (2.8), with $f(x)$ replaced by
$\sqrt{a}f(ax)$]{stwei}  see also, e.g., \cite{gra,pi},  
 \cite[Eqs. 13-14, p. 70]{zyg}.
\end{proof}

\begin{cor}
\label{s2c1}  Let $f$ be as in Proposition \ref{s2p2}. Then
\[
\mf(f)(y) = \int_{\R} e^{2\pi
ixy}f(x)dx=\sqrt{2\pi}\hat{f}(-2\pi y), \qquad y\in \R, 
\]
and for any $a>0$ we have
\[ a \sum_{k \in \Z}f(ak)=
\sum_{k\in \Z} \mf(f)\left(-\frac{k}{a}\right)
\]
(with both series being absolutely convergent to continuous functions of $a$).
\end{cor}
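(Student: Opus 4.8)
The plan is to reduce everything to Proposition \ref{s2p2} once the two Fourier normalizations have been matched. First I would establish the pointwise identity $\mf(f)(y)=\sqrt{2\pi}\,\hat f(-2\pi y)$ by merely unwinding definitions: since $\hat f(y)=\frac{1}{\sqrt{2\pi}}\int_{\R}f(x)e^{-iyx}dx$ and $f\in L^1(\R)$, the integral converges absolutely and $\hat f(-2\pi y)=\frac{1}{\sqrt{2\pi}}\int_{\R}f(x)e^{2\pi i yx}dx=\frac{1}{\sqrt{2\pi}}\mf(f)(y)$. It is convenient to record the equivalent inverted form $\hat f(z)=\frac{1}{\sqrt{2\pi}}\mf(f)\!\left(-\frac{z}{2\pi}\right)$, valid for all $z\in\R$, which is the shape I will use in the second step.

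Next I would apply Proposition \ref{s2p2} to the pair $a$ and $b=2\pi/a$ (so $ab=2\pi$), which is legitimate since $b$ ranges over all of $(0,\infty)$ as $a$ does, hence the resulting identity will hold for every $a>0$. That proposition gives $\sqrt a\sum_{k\in\Z}f(ak)=\sqrt b\sum_{k\in\Z}\hat f(bk)$, with both series absolutely convergent and continuous in the parameter. Multiplying through by $\sqrt a$ and using $\sqrt{ab}=\sqrt{2\pi}$ yields $a\sum_{k\in\Z}f(ak)=\sqrt{2\pi}\sum_{k\in\Z}\hat f\!\left(\frac{2\pi k}{a}\right)$. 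Substituting the inverted first-step identity, $\hat f\!\left(\frac{2\pi k}{a}\right)=\frac{1}{\sqrt{2\pi}}\mf(f)\!\left(-\frac{k}{a}\right)$, cancels the factor $\sqrt{2\pi}$ and leaves exactly $a\sum_{k\in\Z}f(ak)=\sum_{k\in\Z}\mf(f)\!\left(-\frac{k}{a}\right)$, as claimed.

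For the convergence and continuity assertions I would observe that absolute convergence of $\sum_{k}f(ak)$ is already part of Proposition \ref{s2p2}, while absolute convergence of $\sum_{k}\mf(f)(-k/a)$ follows either from that same proposition applied to $\sum_{k}\hat f(bk)$ together with $|\mf(f)(x)|=\sqrt{2\pi}\,|\hat f(-2\pi x)|$, or directly from the hypothesis $|f(x)|+|\mf(f)(x)|\le c(1+|x|)^{-1-\delta}$, which supplies a summable majorant that is uniform on compact subsets of $(0,\infty)$; the Weierstrass $M$-test then yields continuity in $a$ of both sums. There is no genuine obstacle in this corollary: the only point requiring care is the bookkeeping of the normalization constants (the $\tfrac{1}{\sqrt{2\pi}}$ in $\hat f$ versus the $2\pi$ in the exponent defining $\mf$) and the substitution $b=2\pi/a$, which I address above.
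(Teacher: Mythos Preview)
Your proposal is correct and follows essentially the same approach as the paper: apply Proposition \ref{s2p2} with $b=2\pi/a$ and use the normalization identity $\widehat{f}(by)=\frac{1}{\sqrt{2\pi}}\,\mf(f)\!\left(-\frac{by}{2\pi}\right)$ to convert between the two Fourier conventions. You simply spell out the bookkeeping (the multiplication by $\sqrt{a}$ and the convergence/continuity remarks) more explicitly than the paper does.
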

\begin{proof}
This is immediate from Proposition \ref{s2p2} with $b={2\pi}/{a}$ and from the fact that
\[
\widehat{f}(by)=\frac{1}{\sqrt{2\pi}} \mf(f)\Big(-\frac{by}{2\pi}\Big) 
\] 
by the definitions of $\widehat{f}$ and $ \mf(f)$.
\end{proof}

\begin{cor}
\label{s2c2} Let $f$ be as in Proposition \ref{s2p2} and assume that
$y \mapsto f(y)$ is an even function ($y \in \R$). Then for $x>0$
\[\Theta(f)(x)=\frac{1}{2}\left[\frac{\mf(f)(0)}{x}-f(0)\right]+
\frac{1}{x}\Theta(\mf(f))\left(\frac1{x}\right)
\]
\end{cor}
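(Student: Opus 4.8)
The plan is to read off the identity from Corollary \ref{s2c1} by symmetrizing the bilateral sums. First I would observe that since $f$ is even and in $L^1(\R)$, its Fourier transform $\mf(f)(y)=\int_\R e^{2\pi ixy}f(x)\,dx$ is also even: the substitution $x\mapsto -x$ together with evenness of $f$ gives $\mf(f)(-y)=\mf(f)(y)$. Moreover, by Fourier inversion (and evenness of $f$ once more) one has $\mf(\mf(f))=f$, so the pair $(\mf(f),\mf(\mf(f)))=(\mf(f),f)$ satisfies exactly the same continuity, decay and integrability hypotheses as $(f,\mf(f))$. In particular $\mf(f)\in L^1(\R)$ and $\mf(\mf(f))=f\in L^1(\R)$, so Proposition \ref{s2p1}(1) applied to $\mf(f)$ guarantees that $\Theta(\mf(f))(t)=\sum_{n=1}^\infty \mf(f)(nt)$ converges absolutely for every $t>0$, in particular for $t=1/x$; this is what makes the right-hand side of the asserted formula meaningful.

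Next I would apply Corollary \ref{s2c1} with $a=x>0$, which yields
\[
x\sum_{k\in\Z}f(xk)=\sum_{k\in\Z}\mf(f)\Big(-\frac kx\Big),
\]
both series being absolutely convergent. I then split each side according to $k=0$, $k\geq 1$ and $k\leq -1$. Using that $f$ is even, $\sum_{k\in\Z}f(xk)=f(0)+2\sum_{k=1}^\infty f(xk)=f(0)+2\,\Theta(f)(x)$; using that $\mf(f)$ is even, $\sum_{k\in\Z}\mf(f)(-k/x)=\mf(f)(0)+2\sum_{k=1}^\infty\mf(f)(k/x)=\mf(f)(0)+2\,\Theta(\mf(f))(1/x)$. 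Substituting these back gives
\[
x\big[f(0)+2\,\Theta(f)(x)\big]=\mf(f)(0)+2\,\Theta(\mf(f))\Big(\tfrac1x\Big).
\]

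Finally, solving this linear relation for $\Theta(f)(x)$ — dividing by $2x$ and rearranging — produces
\[
\Theta(f)(x)=\frac12\Big[\frac{\mf(f)(0)}{x}-f(0)\Big]+\frac1x\,\Theta(\mf(f))\Big(\frac1x\Big),
\]
which is the claimed formula. The only step that needs a word of justification is the term-by-term reindexing of the bilateral sums into their $k=0$ term plus twice the one-sided tail; this is legitimate by the absolute convergence asserted in Corollary \ref{s2c1} together with the absolute convergence of $\Theta(f)(x)$ and of $\Theta(\mf(f))(1/x)$ noted in the first paragraph. I do not anticipate any genuine analytic obstacle here; the content is entirely in the Poisson/Corollary \ref{s2c1} identity, and the present statement is just its even-function specialization.
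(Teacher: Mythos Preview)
Your proof is correct and follows essentially the same route as the paper: apply Corollary \ref{s2c1} with $a=x$, split the bilateral sums via evenness of $f$ and $\mf(f)$ into the $k=0$ term plus twice the one-sided $\Theta$-sum, and solve for $\Theta(f)(x)$. Your additional remarks on $\mf(\mf(f))=f$ and the absolute convergence of $\Theta(\mf(f))(1/x)$ are not strictly needed for the argument (the latter already follows from the absolute convergence in Corollary \ref{s2c1}), but they do no harm.
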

\begin{proof}
We first remark that $f$ being even implies $\mf(f)$ being even; on
the other hand, from Corollary \ref{s2c1} we have
\[
 a \sum_{k \in \N}f(ak) + a f(0) + a\sum_{k \in \N}
f(-ak) = \sum_{k \in \N} \mf(f)\left(-\frac{k}{a}\right)+ \mf(f)(0)+
\sum_{k \in \N}\mf(f)\left(\frac{k}{a}\right).
\]
Both $f$ and $\mf(f)$ are even, therefore we get
\[
2a \sum_{k \in \N}f(ak) + a f(0) = 2\sum_{k \in \N}
\mf(f)\left(\frac{k}{a}\right)+ \mf(f)(0).
\]
From this, replacing $a$ by $x$, and using the definition of
$\Theta$ we get
\[
2x \Theta(f)(x)+x
f(0)=2\Theta(\mf(f))\left(\frac{1}{x}\right)+\mf(f)(0), \ x>0,
\] which proves Corollary \ref{s2c2}.
\end{proof}

\begin{prop}[Symmetrized version of the generalized M\"untz formula]
\label{s2p3}
Let $f$ be even and  as in Proposition \ref{s2p2}. Then for any $0<\operatorname{Re}(s)<1$:
\begin{align}
\label{g4}
  M(f)(s) \zeta(s)= \frac{1}{2}\left[\frac{\mf(f)(0)}{s-1}-\frac{f(0)}{s}\right]+I(f)(s),
\end{align}
with
\begin{align}
\label{g4-1}
I(f)(s) \equiv \int_{1}^{\infty} x^{s-1} \Theta(f)(x)dx +
               \int_{1}^{\infty} x^{-s} \Theta(\mf(f))(x)dx.
\end{align}
Both integrals on the right hand side of this formula for $I(f)$
exist in Lebesgue's sense.
\end{prop}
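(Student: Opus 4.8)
The plan is to derive the symmetrized M\"untz formula by inserting the functional relation between $\Theta(f)$ and $\Theta(\mf(f))$ (Corollary \ref{s2c2}) into the generalized M\"untz formula of Proposition \ref{s2p1}, and then splitting the Mellin integral at $x=1$. First I would start from part 2b) of Proposition \ref{s2p1}: for $0<\Re(s)<1$ one has $M(f)(s)\zeta(s)=M(\check\Theta(f))(s)$, where $\check\Theta(f)(x)=\Theta(f)(x)-\frac{1}{x}\int_0^\infty f(y)dy$. Since $f$ is even, $\int_0^\infty f(y)dy=\tfrac12\int_\R f(y)dy=\tfrac12\mf(f)(0)$, so $\check\Theta(f)(x)=\Theta(f)(x)-\frac{\mf(f)(0)}{2x}$. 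I would then write $M(\check\Theta(f))(s)=\int_0^1 x^{s-1}\check\Theta(f)(x)dx+\int_1^\infty x^{s-1}\check\Theta(f)(x)dx$ and treat the two ranges separately.

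On the range $x\in(1,\infty)$ the term $\frac{\mf(f)(0)}{2x}$ contributes $\frac{\mf(f)(0)}{2}\int_1^\infty x^{s-2}dx=\frac{\mf(f)(0)}{2}\cdot\frac{1}{1-s}$ (valid for $\Re(s)<1$), leaving $\int_1^\infty x^{s-1}\Theta(f)(x)dx$, which is the first integral in $I(f)(s)$. On the range $x\in(0,1)$ I would substitute $x\mapsto 1/x$, turning $\int_0^1 x^{s-1}\check\Theta(f)(x)dx$ into $\int_1^\infty x^{-s-1}\check\Theta(f)(1/x)dx$; then I invoke Corollary \ref{s2c2}, which gives
\[
\Theta(f)(1/x)=\frac{1}{2}\bigl[\mf(f)(0)\,x-f(0)\bigr]+x\,\Theta(\mf(f))(x),
\]
so that $\check\Theta(f)(1/x)=\Theta(f)(1/x)-\frac{\mf(f)(0)}{2}x=-\frac{f(0)}{2}+x\,\Theta(\mf(f))(x)$. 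Substituting, the $-\frac{f(0)}{2}$ piece contributes $-\frac{f(0)}{2}\int_1^\infty x^{-s-1}dx=-\frac{f(0)}{2s}$ (valid for $\Re(s)>0$), and the remaining piece is $\int_1^\infty x^{-s}\Theta(\mf(f))(x)dx$, the second integral in $I(f)(s)$. Collecting the four pieces yields exactly \eqref{g4} with $I(f)(s)$ as in \eqref{g4-1}.

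The main technical point — rather than a deep obstacle — is justifying that all the manipulations are legitimate: that the hypotheses on $f$ (Proposition \ref{s2p2}: continuity and the decay bound $|f(x)|+|\mf(f)(x)|\le c(1+|x|)^{-1-\delta}$) guarantee absolute convergence of $\Theta(f)$ and $\Theta(\mf(f))$ (Proposition \ref{s2p1}(1) and Corollary \ref{s2c1}), that $M(\check\Theta(f))(s)$ exists as a Lebesgue integral and may be split at $x=1$, and that the two integrals in \eqref{g4-1} converge absolutely for $0<\Re(s)<1$. For the latter I would note that near $x=1^+$ the integrands are continuous, and as $x\to\infty$ we have $\Theta(f)(x)=f(x)+O\!\bigl(\sum_{n\ge2}(nx)^{-1-\delta}\bigr)=O(x^{-1-\delta})$ by the decay bound, so $x^{s-1}\Theta(f)(x)=O(x^{\Re(s)-2-\delta})$ is integrable on $[1,\infty)$, and similarly for the $\Theta(\mf(f))$ integral since $\mf(f)$ enjoys the same decay; the interchange implicit in substituting Corollary \ref{s2c2} is then justified by Fubini/dominated convergence. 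All of this is routine given the earlier results, so the proof is essentially a bookkeeping of the split Mellin integral combined with the Poisson-type identity of Corollary \ref{s2c2}.
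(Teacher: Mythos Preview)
Your proposal is correct and follows essentially the same route as the paper's proof: start from Proposition~\ref{s2p1}, split the Mellin integral of $\check\Theta(f)$ at $x=1$, handle the $(1,\infty)$ piece by separating the $\tfrac{\mf(f)(0)}{2x}$ term, and handle the $(0,1)$ piece via Corollary~\ref{s2c2} together with the substitution $x\mapsto 1/x$. The only cosmetic differences are that the paper applies Corollary~\ref{s2c2} before changing variables (you do it after), and the paper phrases the convergence arguments via truncated limits $a\downarrow 0$, $b\uparrow\infty$ rather than via the direct decay estimate $\Theta(f)(x)=O(x^{-1-\delta})$ that you use; both justifications are valid under the hypotheses of Proposition~\ref{s2p2}.
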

\begin{proof}
From Proposition \ref{s2p1} we have for any $0<a<1<b<+\infty$:
\begin{align}
\label{g5}
M(f)(s)\zeta(s)=\lim_{a\downarrow 0} \int_{a}^{1} x^{s-1}\check{\Theta}(f)(x)dx
                  + \lim_{b \uparrow \infty} \int_{1}^{b} x^{s-1}\check{\Theta}(f)(x)dx.
\end{align}
Both limits exist and the integrals are in Lebesgue's sense. From the definition of
$\check{\Theta}$ (in Prop. \ref{s2p1}) and Corollary \ref{s2c2}, then
\begin{align}
\begin{split}
\label{g6}
\int_{a}^{1}x^{s-1}\check{\Theta}(f)(x)dx
=& \frac{1}{2}\int_{a}^{1}x^{s-1}\left[\frac{\mf(f)(0)}{x}-f(0)\right]dx\\
    &+\int_{a}^{1}x^{s-1}x^{-1}\Theta(\mf(f))\left(\frac1{x}\right)dx
    -\int_a^{1}x^{s-1}\frac{\mf(f)(0)}{2x}dx\\
=&-\frac{f(0)}{2}\int_a^{1}x^{s-1}dx
        +\int_a^{1}x^{s-2}\Theta(\mf(f))\left(\frac1{x}\right)dx\\
=&-\frac{f(0)}{2s}[1-a^s]+\int_a^{1}x^{s-2}\Theta(\mf(f))\left(\frac1{x}\right)dx.
\end{split}
\end{align}

In the latter expression the first term converges for $a \downarrow
0$ (for all $0<\operatorname{Re}(s)<1$) to $ - {f(0)}/(2s)$.
Since the limit of the left hand side of \eqref{g6} for $a
\downarrow 0$ exists, also absolutely, by Proposition \ref{s2p1},
\[\lim_{a\downarrow
0}\int_{a}^{1}x^{s-2}\Theta(\mf(f))\left(\frac1{x}\right)dx
\]
must also exist (for $0<\operatorname{Re}(s)<1$). Thus, under our
assumption on $s$:
\begin{align}
\label{g7}
\lim_{a \downarrow 0} \int_{a}^{1}x^{s-1}\check{\Theta}(f)(x)dx =
  -\frac{1}{2}f(0)\frac{1}{s}+
  \lim_{a \downarrow 0} \int_{a}^{1}x^{s-2}\Theta(\mf(f))\left(\frac1{x}\right)dx.
\end{align}
On the other hand, for any $0<a<1$ (by the change of variables $x\to x'=\frac1{x}$):
\beq
\label{g8}
\int_{a}^{1}x^{s-2}\Theta(\mf(f))\left(\frac1{x}\right)dx
  =\int_{\frac{1}{a}}^{1}x'^{2-s}\Theta\left(\mf(f)\right)(x')\left(-\frac{1}{x'^2}\right)dx' 
  =\int_{1}^{\frac{1}{a}}x^{-s}\Theta(\mf(f))(x)dx.
\eeq
From \eqref{g7} and \eqref{g8} we get
\beq
\label{g9}
\lim_{a \downarrow 0} \int_{a}^{1}x^{s-1}\check{\Theta}(f)(x)dx =
  -\frac1{2}f(0)\frac{1}{s}
  + \lim_{a \downarrow 0} \int_{1}^{\frac{1}{a}}x^{-s}\Theta(\mf(f))(x)dx,
\eeq
with both limits existing absolutely. By \eqref{g5} and \eqref{g9}
we get under our assumptions on $s$:
\[
M(f)(s)\zeta(s)=-\frac1{2}f(0)\frac{1}{s}
    +\lim_{a \downarrow 0} \int_{1}^{\frac{1}{a}}x^{-s}\Theta(\mf(f))(x)dx+\lim_{b \uparrow \infty}
    \int_{1}^{b}x^{s-1}\check{\Theta}(f)(x)dx.
\]
We note that from Proposition \ref{s2p1} the existence of the limit
of the latter integral is assured. But, from the definition of $\check{\Theta}$:
\beq
\label{gl26}
\begin{aligned}
\int_1^{b}x^{s-1}\check{\Theta}(f)(x)dx=&\int_1^{b}x^{s-1}\Theta(f)(x)dx
        -\int_1^{b}x^{s-1}\frac{\mf(f)(0)}{2x}dx\\
        =&\int_1^{b}x^{s-1}\Theta(f)(x)dx
        -\frac{\mf(f)(0)}{2\left(s-1\right)}(b^{s-1}-1).
\end{aligned}
\eeq
Since the second term in the latter expression converges for
$b\to+\infty$ to $\frac{\mf(f)(0)}{2\left(s-1\right)}$, also the
first term must converge, as $b\to+\infty$.  Hence we get
\beq
\label{g10}
M(f)(s)\zeta(s)=\frac{1}{2}\left(\frac{\mf(f)(0)}{s-1}-\frac{f(0)}{s}\right)
    +\int_{1}^{\infty}x^{-s}\Theta(\mf(f))(x)dx+\int_{1}^{\infty}x^{s-1}\Theta(f)(x)dx,
\eeq
which proves \eqref{g4} (with the integrals converging absolutely).
\end{proof}

\begin{rem}
We note that in Proposition \ref{s2p1} only the values of $f$ on $\RE_+$ are used. Corollary \ref{s2c2} (under the stronger assumption on $f$ given in Proposition \ref{s2p2}) serves to replace the integral over $\RE_+$ on the right hand side of Proposition \ref{s2p1} by integrals on $[1,+\infty)$, and here an even extension of a given $f$ on $\RE_+$ to $\RE$ is used in order to deduce Proposition  \ref{s2p3}.  
\end{rem}

\begin{prop}
\label{prop5.8}
Let $f$ be even and as in Proposition \ref{s2p2}. Then for any $0<\Re s<1$
\begin{itemize}
\item[1)]
\beq
\label{rel-1}
I(\mf(f))(s)=I(f)(1-s)
\eeq
\item[2)]
\beq
\label{rel-2}
M(\mf(f))(s) \zeta(s)
=M(f)(1-s) \zeta(1-s)
\eeq
\end{itemize}
where $I(f)$ was defined in \eqref{g4-1}.
\end{prop}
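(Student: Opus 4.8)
The plan is to derive both identities from the symmetrized M\"untz formula (Proposition \ref{s2p3}), applied in turn to $f$ and to $\mf(f)$, together with the fact that $\mf$ acts as an involution on even functions.

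\textbf{Part 1).} First I would record two elementary facts. Since $f$ is even and both $f$ and $\mf(f)$ lie in $L^1(\R)$ (the latter by the decay hypothesis of Proposition \ref{s2p2}), Fourier inversion gives $\mf(\mf(f))(x)=f(-x)=f(x)$, and $\mf(f)$ is itself even. Hence $g:=\mf(f)$ satisfies the same standing hypotheses as $f$: it is even, continuous, in $L^1(\R)$, and $|g(x)|+|\mf(g)(x)|=|\mf(f)(x)|+|f(x)|\leqslant c(1+|x|)^{-1-\delta}$ since $\mf(g)=f$. With this in hand I would simply expand both sides of \eqref{rel-1} using the definition \eqref{g4-1}: on the one hand, using $\mf(\mf(f))=f$,
\[
I(\mf(f))(s)=\int_1^\infty x^{s-1}\Theta(\mf(f))(x)\,dx+\int_1^\infty x^{-s}\Theta(f)(x)\,dx,
\]
while substituting $1-s$ for $s$ in \eqref{g4-1} written for $f$ gives
\[
I(f)(1-s)=\int_1^\infty x^{-s}\Theta(f)(x)\,dx+\int_1^\infty x^{s-1}\Theta(\mf(f))(x)\,dx,
\]
which is the same expression with the two summands interchanged. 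Since the constraint $0<\Re s<1$ is symmetric under $s\mapsto 1-s$, both sides are defined as absolutely convergent Lebesgue integrals by Proposition \ref{s2p3}, and \eqref{rel-1} follows.

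\textbf{Part 2).} Here I would apply the formula \eqref{g4} of Proposition \ref{s2p3} twice. Applied to $g=\mf(f)$ at the point $s$, and using $\mf(g)(0)=f(0)$, $g(0)=\mf(f)(0)$, it gives
\[
M(\mf(f))(s)\zeta(s)=\frac12\Big[\frac{f(0)}{s-1}-\frac{\mf(f)(0)}{s}\Big]+I(\mf(f))(s).
\]
Applied to $f$ at the point $1-s$ (legitimate since $0<\Re(1-s)<1$), and simplifying $(1-s)-1=-s$ and $-\tfrac{f(0)}{1-s}=\tfrac{f(0)}{s-1}$, it gives
\[
M(f)(1-s)\zeta(1-s)=\frac12\Big[\frac{f(0)}{s-1}-\frac{\mf(f)(0)}{s}\Big]+I(f)(1-s).
\]
Subtracting, the bracketed boundary terms cancel and I am left with $M(\mf(f))(s)\zeta(s)-M(f)(1-s)\zeta(1-s)=I(\mf(f))(s)-I(f)(1-s)$, which vanishes by part 1); this is \eqref{rel-2}.

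The computations are entirely routine. The one analytic input is the involutivity $\mf\circ\mf=\mathrm{id}$ on even $L^1$ functions with $L^1$ Fourier transform, which is immediate from Fourier inversion; the only points requiring a little care are checking that $\mf(f)$ inherits the evenness and decay hypotheses of Proposition \ref{s2p2}, so that the symmetrized M\"untz formula may be applied to it, and keeping track of the signs of the boundary terms in \eqref{g4} when the argument is changed from $s$ to $1-s$. I do not anticipate any genuine obstacle.
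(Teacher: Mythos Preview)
Your proof is correct and follows essentially the same route as the paper's: both establish \eqref{rel-1} by expanding $I(\mf(f))(s)$ from the definition \eqref{g4-1} and using $\mf(\mf(f))=f$ for even $f$, and then deduce \eqref{rel-2} by applying the symmetrized M\"untz formula \eqref{g4} to $\mf(f)$ and matching the boundary terms. Your write-up is slightly more explicit in verifying that $\mf(f)$ inherits the hypotheses of Proposition \ref{s2p2} and in tracking the $s\mapsto 1-s$ substitution in the bracket, but the argument is the same.
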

\begin{proof}
We first remark that by the assumptions in  Proposition \ref{s2p2}, we have that both $f$ and $\mf(f)$ are in $L^1(\R)$. Moreover $I(f)(s)$ and $I(\mf(f))(s)$  exist in Lebesgue's sense for any $0<\Re s<1$. Moreover being $f$ even, then $\mf(f)$ is also even. We also have,   $\mf(\mf(f))(x)= f(-x),x \in \R$ (as   follows easily from the definition of $\mf$ and $f$, $\mf(f)\in L^1(\R)$, see, e.g. \cite[p. 173]{weav2}). But $f$ being even, we then get  $\mf(\mf(f))=f$. Then by \eqref{g4-1}  one has 
\[
\begin{aligned}
I(\mf(f))(s) = & \int_{1}^{\infty} x^{s-1} \Theta(\mf(f))(x)dx +
               \int_{1}^{\infty} x^{-s} \Theta(\mf(\mf(f)))(x)dx\\
 =&\int_{1}^{\infty} x^{s-1} \Theta(\mf(f))(x)dx +
               \int_{1}^{\infty} x^{-s} \Theta(f)(x)dx
               = I(f)(1-s)               
\end{aligned}
\]
which proves the formula \eqref{rel-1}.  Moreover by  Proposition
 \ref{s2p3}, $M(\mf(f))(s)\zeta(s)$ can be written  by using Eq.   \eqref{g4}.    Formula  \eqref{rel-2} is a trivial consequence of \eqref{rel-1} and  of the fact that $\mf(\mf(f))(0) = f(0)$ implies 
\[
\frac{1}{2}\left(\frac{\mf(\mf(f))(0)}{s-1}-\frac{\mf(f)(0)}{s}\right)
=\frac{1}{2}\left(\frac{f(0)}{s-1}-\frac{\mf(f)(0)}{s}\right).
\]
\end{proof}

\begin{rem}
\label{r5.9}
From  formula \eqref{rel-2} and from the functional equation
 $\Gamma(z/2)\pi^{-\f
 z2}\zeta(z)=\Gamma((1-z)/2)\pi^{\f{z-1}2}\zeta(1-z)$, for all
 $z\in\CO$ (see, e.g., \cite{tit}, \cite[9.535]{GraRyz07} and \cite{oss0809}) it follows that, for any $f$ satisfying the assumptions of Proposition \ref{prop5.8}, the Mellin transform $M(\mf(f))$ of $\mf(f)$ exists and is given in terms of the Mellin transform of $f$  by
\beq
M(\mf(f))(s)=
\frac{\zeta(1-s)}{\zeta(s)}M(f)(1-s)
=\frac{\Gamma(s/2)\pi^{1-s}}{\Gamma((1-s)/2)}M(f)(1-s),
\qquad 0<\Re s<1,\ \zeta(s)\neq0.
\eeq
\end{rem}
Let us point out that whereas the right hand side of the first equality is well defined only if $\zeta(s)\neq0$, 
the right hand side of the second equality is also well defined for all $0<\Real s<1$, so that
\beq\label{e.5.12a}
M(\mf(f))(s)=\frac{\Gamma(\frac{s}{2})\pi^{1-s}}{\Gamma(\frac{1-s}{2})}
M(f)(1-s),
\eeq
for all $0<\Real s<1$.

In the following we consider a particular choice $\phi$ of  the test function $f$ in the  M\"untz formula.
This will permit to obtain explicit formulae for both $\mf(f)$ and $M(\mf(f))$
and, more importantly, to relate our integral representation for $\zeta$ with
M\"untz formula (in the form of \eqref{prop-5.10-eq2} below, see also Remark \ref{r.5.11}).
\begin{prop}
\label{p3.1}
Let $\phi(x)\equiv 1-|x|$ for $ x\in[-1,+1],\ \phi(x)\equiv0,\ x\in(-\infty,-1)\cup(1,+\infty)$.
Then:
\begin{itemize}
\item[1)]
\[
\mf(\phi)(y)=\int_{\R}e^{2\pi ixy}\phi(x)dx
=\frac{1-\cos(2\pi y)}{2\pi^2y^2},\ \ y\in\R\backslash\left\{0\right\}
\]
\[
\mf(\phi)(0)=1,\ \ \text{ for } y=0 
\]
\item[2)] $\phi$ has all properties of the function $f$ in Proposition \ref{prop5.8}.
\item[3)] For all $0<\Re s$ the Mellin transform of $\phi$ is given by
\beq
\label{prop-5.10-eq1}
M(\phi)(s)=\frac{1}{s(s+1)}.
\eeq
For $0<\Real s<1$ one has
\beq
\label{prop-5.10-eq2}
M(\mf(\phi))(s)\zeta(s)=\frac{1}{(1-s)(2-s)}\zeta(1-s).
\eeq
For $s$ such that $\zeta(s)\neq0$,
$0<\Real s<1$ we also have
\[
M(\mf(\phi))(s)=\frac{\zeta(1-s)}{\zeta(s)(1-s)(2-s)}.
\]
\item[4)] 
The Mellin transform of $\mf(\phi)$ is given for all $0<\Real s<1$ by
\beq\label{e.5.15}
M(\mf(\phi))(s)=\frac{\Gamma(s/2)\pi^{1-s}}{\Gamma((1-s)/2)}\frac{1}{(1-s)(2-s)}.
\eeq
\end{itemize}
\end{prop}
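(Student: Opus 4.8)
The plan is to prove the four items in order, since each relies on the previous ones together with results already established. For item~1), I would compute the Fourier transform $\mf(\phi)(y)=\int_{-1}^{1}e^{2\pi ixy}(1-|x|)dx$ directly. Using that $\phi$ is even, this equals $2\int_0^1(1-x)\cos(2\pi xy)dx$, which is an elementary integration by parts; the result is the Fej\'er kernel $\frac{1-\cos(2\pi y)}{2\pi^2 y^2}$, and the value at $y=0$ follows either by continuity or by noting $\int_{-1}^{1}(1-|x|)dx=1$. For item~2), I would simply check the hypotheses of Proposition~\ref{s2p2}: $\phi$ is even, continuous, compactly supported, hence in $L^1(\R)$; and $\mf(\phi)(y)=\frac{1-\cos(2\pi y)}{2\pi^2y^2}$ is continuous, nonnegative, $O(y^{-2})$ at infinity and bounded near $0$, so the decay bound $|\phi(x)|+|\mf(\phi)(x)|\leqslant c(1+|x|)^{-1-\delta}$ holds with, say, $\delta=1$. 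Thus $\phi$ fulfills all assumptions of Propositions~\ref{s2p2} and~\ref{prop5.8}.

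For item~3), the first formula $M(\phi)(s)=\int_0^1 x^{s-1}(1-x)dx=\frac1s-\frac1{s+1}=\frac1{s(s+1)}$ is a Beta-integral computation valid for $\Re s>0$. The second formula is then obtained by specializing Proposition~\ref{prop5.8}, part~2): formula~\eqref{rel-2} with $f=\phi$ gives $M(\mf(\phi))(s)\zeta(s)=M(\phi)(1-s)\zeta(1-s)$, and substituting $M(\phi)(1-s)=\frac{1}{(1-s)(2-s)}$ yields \eqref{prop-5.10-eq2}. Dividing by $\zeta(s)$ where it is nonzero gives the third displayed formula. Finally, item~4) follows by invoking \eqref{e.5.12a} (the form of Remark~\ref{r5.9} valid for all $0<\Re s<1$, obtained from the functional equation): with $f=\phi$ this reads $M(\mf(\phi))(s)=\frac{\Gamma(s/2)\pi^{1-s}}{\Gamma((1-s)/2)}M(\phi)(1-s)$, and again inserting $M(\phi)(1-s)=\frac{1}{(1-s)(2-s)}$ gives \eqref{e.5.15}.

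The only genuinely computational step is item~1), and even there the main thing to be careful about is the bookkeeping of the integration by parts and the trigonometric identity $1-\cos(2\pi y)=2\sin^2(\pi y)$; there is no real obstacle. The remaining items are essentially substitutions into previously proved identities, the one subtlety being to note that \eqref{e.5.15} is valid on the whole strip $0<\Re s<1$ (not merely where $\zeta(s)\neq0$), precisely because we use the $\Gamma$-factor form \eqref{e.5.12a} rather than the ratio $\zeta(1-s)/\zeta(s)$. I would state explicitly at that point that $\phi$ qualifies for Remark~\ref{r5.9} by item~2), so no extra hypothesis-checking is needed.
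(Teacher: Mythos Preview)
Your plan is correct and matches the paper's own proof essentially step for step: direct computation of $\mf(\phi)$ for item~1), verification of the decay bounds for item~2), the elementary Mellin integral plus an appeal to \eqref{rel-2} for item~3), and the application of \eqref{e.5.12a} for item~4). The only cosmetic difference is that the paper cites a reference for the Fourier computation rather than writing out the integration by parts, and it computes $M(\phi)(s)$ via a limit $\ep\downarrow0$ rather than invoking the Beta integral; your observation about why \eqref{e.5.15} holds on the full strip is also exactly the point the paper makes.
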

\begin{proof}
\begin{itemize}
\item [\emph{1)}]This is a simple computation (in fact the formula can be extracted, e.g., from \cite[p. 186]{Don69}), minding that
	\[
	\mf(\phi)(y)=(2\pi)^{\frac12}\hat{\phi}(-2\pi y)
	\]
	and
	\[
	\hat{\phi}(z)=\sqrt{\frac2\pi}\frac{1-\cos z}{z^2},\ \ z\in\R.
	\]
\item[\emph{2)}] $\phi$ is even, continuous and obviously satisfies
	$|\phi(x)|\leqslant\frac{c}{(1+|x|)^{1+\delta}}$,
	for some $c>0, \delta>0$.
	Moreover $\mf(\phi)$ is also continuous, since it behaves as $1- \frac{(2\pi y)^4}{4! 2\pi^2y^2}$ in 
	a neighborhood of $0$ (as seen from Taylor's formula) and is bounded by $\frac2{2\pi^2y^2}$ for large $|y|$,
	hence satisfies $|\hat{\phi}(x)|\leqslant\frac{c}{(1+|x|)^{1+\delta}},$ for some $c,\delta>0$.
	Let us remark, in addition, that
	$\phi$ is in fact $C^{\infty}$ on 
	$\R\backslash(\left\{0\right\}\cup\left\{+1\right\}\cup\left\{-1\right\})$, with right and left 
	derivatives at $0$ and $\pm1$, with finite jumps from the left to the right.
	$\mf(\phi)$ belongs to $C^{\infty}(\R)$ and is even (as Fourier transform of the even function $\phi$).
\item[\emph{3)}] By the definitions of $M$, $\phi$ 
\[
\begin{aligned}
M(\phi)(s)=&\lim_{\ep\downarrow0}\int_{\ep}^{+\infty}x^{s-1}\phi(x)dx=
\lim_{\ep\downarrow0}\int_{\ep}^{1}x^{s-1}(1-x)dx\\
=&\lim_{\ep\downarrow0}\left[\left.\frac1s x^s\right|_{\ep}^1
-\left.\frac1{s+1} x^{s+1}\right|_{\ep}^1\right]
=\lim_{\ep\downarrow0}\left[\frac1s-\frac1s\ep^s-\frac1{s+1}+\frac1{s+1}\ep^{s+1}\right]\\
=&\frac1s-\frac1{s+1}=\frac1{s(s+1)}
\end{aligned}
\]
(where we used $\Real s>0$). Formula  \eqref{prop-5.10-eq2} comes from formula \eqref{rel-2} in Proposition \ref{prop5.8} and from \eqref{prop-5.10-eq1}.
\item[\emph{4)}] This is an immediate consequence of \eqref{e.5.12a}  (a consequence of the functional equation for $\zeta$) and \eqref{prop-5.10-eq1}.
\end{itemize}
\end{proof}

\begin{rem}
\label{r.5.11}
The computation of the Mellin transform in Proposition \ref{p3.1}, $\mathit{4}$), of the function $\mf(\phi)$ in Proposition \ref{p3.1}, $\mathit{1}$), also might have some interest in itself (this Mellin transform does not seem to be contained, e.g., in    \cite{Obe74}). 

Let us also point out that \eqref{prop-5.10-eq2} with \eqref{e.5.15} yield, in
turn, the well known functional equation for the Riemann zeta function.

Let us stress furthermore  that we can derive \eqref{prop-5.10-eq2} using our integral representation for $\zeta$ given in Theorem \ref{t.3.1}. From the definition of $\phi$ given in Proposition  \ref{p3.1} and by using Eq. \eqref{g4} one has namely
\beq
\label{start}
M(\mf(\phi))(s) \zeta(s)= \frac{1}{2}\frac{1}{s(s-1)}+\int_{1}^{\infty} x^{s-1} \Theta(\mf(\phi))(x)dx,
\eeq
where we used the definition \eqref{g4-1} of $I(\phi)$, the fact that $\mf(\mf(\phi))=\phi$, and the fact that $\phi(x)=0$ for $x>1$. From the computation of $\mf(\phi)$ in Proposition \ref{p3.1}, $1)$ we have, for $y\in[1,+\infty)$
\[
\Theta(\mf(\phi))(y)=\sum_{n=1}^{+\infty}\frac{1-\cos(2\pi ny)}{2\pi^2n^2y^2}
=\frac1{12}\frac1{y^2}-\frac1{2\pi^2}\frac1{y^2}p(2\pi y),
\]
where we used $\sum_{n=1}^{+\infty}\frac1{n^2}=\zeta(2)=\frac{\pi^2}{6}$ and the definition \eqref{e.2.2} of $p$. Then 
\beq
\label{end-0}
\begin{aligned}
\int_{1}^{\infty} x^{s-1} \Theta(\mf(\phi))(x)dx
=&\frac1{12}\int_1^{+\infty}x^{s-3}dx
-\frac1{2\pi^2}\int_1^{+\infty}x^{s-3} p(2\pi x)dx\\
=&-\frac{1}{12}\frac{1}{s-2}-\frac{1}{\pi(2\pi)^{s-1}}\int_1^{2\pi}y^{s-3}p(y)dy
+\frac{1}{\pi(2\pi)^{s-1}} D(s-3).
\end{aligned}
\eeq
From Corollary \ref{cor-Dal} one has, on the other hand:
\beq
\label{end-2}
\frac{1}{\pi(2\pi)^{s-1}} 
D(s-3)=
\frac{1}{\pi(2\pi)^{s-1}} \bigg[
-\frac{\pi^2}{6}\frac1{s-2}
+\frac{\pi}{2}\frac1{s-1}
-\frac14\frac1{s}-\frac{(2\pi)^{s}}{2(s-1)(s-2)}\zeta(1-s)\bigg].
\eeq
From the definition of $p$, on the other hand:
\beq
\label{end-3}
\begin{aligned}
\frac{1}{\pi(2\pi)^{s-1}}\int_1^{2\pi}y^{s-3}p(y)dy
=&\frac{1}{\pi(2\pi)^{s-1}}I^{s-3}\\
=&\frac{1}{\pi(2\pi)^{s-1}}\bigg[
\frac{\pi^2}{6}\frac1{s-2}((2\pi)^{s-2}-1)
-\frac{\pi}{2}\frac1{s-1}((2\pi)^{s-1}-1)
+\frac{1}{4}\frac1{s}((2\pi)^{s}-1)
\bigg]
\end{aligned}
\eeq
where $I^\al$ was defined in \eqref{Ialpha} and computed in  \eqref{Ialpha-2}. Using \eqref{end-2} and \eqref{end-3} in \eqref{end-0}, and then \eqref{end-0} in \eqref{start}, it follows that 
\[
M(\mf(\phi))(s) \zeta(s)=\frac{1}{(s-1)(s-2)}\zeta(1-s), 
\]
which is formula \eqref{prop-5.10-eq2}
\end{rem}

\begin{rem}
We also remark that computing in another way $M(\mf(\varphi))$ we can get an explicit integral, which might have some interest in itself. In fact using Proposition \ref{p3.1}, $\mathit{1}$), for $f=\varphi$, and the definition \eqref{mellin} of  $M$ we have 
\beq
\label{star}
\begin{aligned}
M(\mf(\phi))(s)=&
\int_0^1x^{s-1}\frac{1-\cos(2\pi x)}{2\pi^2x^2}dx
+\int_1^\infty x^{s-1}\frac{1-\cos(2\pi x)}{2\pi^2x^2}dx\\
=&
\int_0^1x^{s-1}\frac{1-\cos(2\pi x)}{2\pi^2x^2}dx
-\frac{1}{2\pi^2}\frac{1}{s-2}-\frac{1}{2\pi^2}\int_1^\infty x^{s-3}\cos(2\pi x)dx.
\end{aligned}
\eeq
Using, e.g., \cite[5.24]{Obe74} (for $a=2\pi$, $c=1$, $b=0$, $z=s-2$):
\[
\int_1^\infty x^{s-3}\cos(2\pi x)dx=
\frac{1}{2}(2\pi i )^{-(s-2)}\Gamma(s-2,2\pi i)+\frac{1}{2}(-2\pi i)^{-(s-2)}\Gamma(s-2,-2\pi i).
\]
Inserting this into \eqref{star} and comparing with \eqref{e.5.15} we get 
\beq
\begin{aligned}
M(\mf(\phi))(s)=
\frac{\Gamma(s/2)\pi^{1-s}}{\Gamma((1-s)/2)}\frac{1}{(1-s)(2-s)}
=&
\int_0^1x^{s-1}\frac{1-\cos(2\pi x)}{2\pi^2x^2}dx-
\frac{1}{2\pi^2}\frac{1}{s-2}\\
&-\frac{1}{4\pi^2}
\Big[(2\pi i )^{-(s-2)}\Gamma(s-2,2\pi i)+(-2\pi i)^{-(s-2)}\Gamma(s-2,-2\pi i)\Big].
\end{aligned}
\eeq
From this we can obtain an expression of the integral on the right hand side 
(``incomplete Mellin transform" of $\mf(\varphi)$)
in terms of incomplete gamma functions, namely for $0<\Re s<1$:
\beq
\begin{aligned}\label{e.5.22}
\int_0^1x^{s-1}\frac{1-\cos(2\pi x)}{2\pi^2x^2}dx=&
\frac{\Gamma(s/2)\pi^{1-s}}{\Gamma((1-s)/2)}\frac{1}{(1-s)(2-s)}
+
\frac{1}{2\pi^2}\frac{1}{s-2}\\
&-(2\pi i)^{-s}\Big[\Gamma(s-2,2\pi i)+\Gamma(s-2,-2\pi i)\Big].
\end{aligned}
\eeq
\end{rem}

Formula \eqref{e.5.22} is used in the following proposition to express a certain series containing factorials in terms of incomplete gamma functions.

\begin{prop}
The following summation formula holds for all $0<\Real s<1$:
\beq
\label{icing}
\begin{aligned}
\frac{1}{2\pi^2}\sum_{k=1}^\infty\frac{(2\pi)^{2k}}{(2k)!}\frac{(-1)^k}{s-2+2k}=&
\frac{\Gamma(s/2)\pi^{1-s}}{\Gamma((1-s)/2)}\frac{1}{(1-s)(2-s)}
+
\frac{1}{2\pi^2}\frac{1}{s-2}\\
&-(2\pi i)^{-s}\Big[\Gamma(s-2,2\pi i)+\Gamma(s-2,-2\pi i)\Big].
\end{aligned}
\eeq
\end{prop}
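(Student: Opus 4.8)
The plan is to observe that the right-hand side of \eqref{icing} is identical with the right-hand side of \eqref{e.5.22}; hence, by \eqref{e.5.22}, it equals the ``incomplete Mellin transform''
\[
J(s)\equiv\int_0^1 x^{s-1}\,\frac{1-\cos(2\pi x)}{2\pi^2 x^2}\,dx ,\qquad 0<\Re s<1 .
\]
So the whole proposition reduces to the elementary identity
\[
J(s)=\frac{1}{2\pi^2}\sum_{k=1}^{\infty}\frac{(2\pi)^{2k}}{(2k)!}\,\frac{(-1)^k}{s-2+2k},
\]
which I would obtain by expanding $1-\cos(2\pi x)$ in its Taylor series and integrating term by term.

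In detail: from $\cos t=\sum_{k\ge0}(-1)^k t^{2k}/(2k)!$ one gets, for $x\in(0,1]$,
\[
x^{s-1}\,\frac{1-\cos(2\pi x)}{2\pi^2 x^2}
=\frac{1}{2\pi^2}\sum_{k=1}^{\infty}\frac{(-1)^{k-1}(2\pi)^{2k}}{(2k)!}\,x^{s+2k-3}.
\]
For $0<\Re s<1$ and every $k\ge1$ the exponent satisfies $\Re(s+2k-3)>-1$, so $\int_0^1 x^{s+2k-3}\,dx=\frac{1}{s+2k-2}$, and a term-by-term integration gives $J(s)=\frac{1}{2\pi^2}\sum_{k\ge1}\frac{(-1)^{k-1}(2\pi)^{2k}}{(2k)!\,(s+2k-2)}$, which is the asserted series after the relabelling $(-1)^{k-1}=-(-1)^k$, $s+2k-2=s-2+2k$. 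Combined with \eqref{e.5.22}, this yields \eqref{icing}.

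The only point that needs justification is the interchange of sum and integral near $x=0$, since $x^{s-1}$ is unbounded there when $0<\Re s<1$. I would settle this by Tonelli's theorem applied to the majorant series $\frac{1}{2\pi^2}\sum_{k\ge1}\frac{(2\pi)^{2k}}{(2k)!}x^{\Re s+2k-3}$: since
\[
\sum_{k=1}^{\infty}\frac{(2\pi)^{2k}}{(2k)!}\int_0^1 x^{\Re s+2k-3}\,dx
=\sum_{k=1}^{\infty}\frac{(2\pi)^{2k}}{(2k)!\,(\Re s+2k-2)}
\le\frac{1}{\Re s}\sum_{k=1}^{\infty}\frac{(2\pi)^{2k}}{(2k)!}<\infty,
\]
the double sum–integral converges absolutely and Fubini/Tonelli legitimises the exchange. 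Alternatively one splits $\int_0^1=\int_0^\delta+\int_\delta^1$, uses uniform convergence of the power series on $[\delta,1]$ and dominates the tail over $[0,\delta]$ by the same majorant, then lets $\delta\downarrow0$. Beyond this, the argument is pure bookkeeping and the appeal to \eqref{e.5.22}, so I expect no real obstacle.
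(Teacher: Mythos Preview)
Your argument is essentially identical to the paper's own proof: both reduce \eqref{icing} to \eqref{e.5.22} and then establish \eqref{e.5.22a} by inserting the Taylor expansion of $\cos(2\pi x)$ and justifying the termwise integration (the paper invokes dominated convergence where you use Tonelli, which amounts to the same thing here). Your remark about the ``relabelling'' $(-1)^{k-1}=-(-1)^k$ actually flags a sign slip that is already present in the paper's formulae \eqref{e.5.22a} and \eqref{icing}; modulo that shared typo, the two proofs coincide.
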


\begin{proof}
This follows from \eqref{e.5.22} observing that 
 the left hand side can be expressed in the following way (by inserting the power series expansion of $\cos(2\pi x)$ and using dominated convergence to exchange sum and integration): 
\begin{align}\label{e.5.22a}
\int_0^1x^{s-1}\frac{1-\cos(2\pi x)}{2\pi^2x^2}dx=\frac{1}{2\pi^2}\sum_{k=1}^\infty\frac{(2\pi)^{2k}}{(2k)!}\frac{(-1)^k}{s-2+2k}
\end{align}
Comparison of \eqref{e.5.22} and \eqref{e.5.22a} immediately yields the summation formula (which does not seem to appear in the usual tables on series, e.g., \cite{han}).
\end{proof}

\appendix

\section{Another derivation of the formula for $D(\wa)$} 
\label{sec6}
We sketch another derivation of the formula for $D(\alpha)$ which enters our integral representation for $\zeta$ given in Theorem \ref{t.3.1}.
We do not provide all the details but point out some explicit formulae which might have some interest in themselves.

\begin{lem}\label{l.2.1}
Let $D(\alpha)$ be defined by \eqref{e.2.7}, and assume
$\Real \alpha<-1$. Then:
\[
D(\alpha)=\frac{\pi^2}{6}\frac1{\alpha+1}[(2\pi)^{\alpha+1}-1]
-\frac{\pi}{2}\frac1{\alpha+2}[(2\pi)^{\alpha+2}-1]
+\frac14\frac1{\alpha+3}[(2\pi)^{\alpha+3}-1]+\widetilde{A}(\alpha),
\]
with
\begin{align*}
\widetilde{A}(\alpha)\equiv\sum_{k=1}^{+\infty}(2\pi k)^{\alpha}
\left\{
\frac{\pi^2}{6}(2\pi)\sum_{l=0}^{+\infty}{\alpha\choose l}
\frac1{l+1}\frac1{k^l}
-\frac{\pi}{2}(2\pi)^2\sum_{l=0}^{+\infty}{\alpha\choose l}
\frac1{l+2}\frac1{k^l}+\frac14(2\pi)^3\sum_{l=0}^{+\infty}{\alpha\choose l}\frac1{l+3}\frac1{k^l}
\right\}.
\end{align*}
\end{lem}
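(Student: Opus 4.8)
The plan is to split the integral \eqref{e.2.7} defining $D(\alpha)$ at the successive points $2\pi k$, handle the first cell $[1,2\pi)$ by the explicit computation already carried out in \eqref{Ialpha-2}, and treat each remaining cell $[2\pi k,2\pi(k+1))$ by shifting it to $[0,2\pi)$, using the periodicity and the quadratic form \eqref{e.2.1} of $p$, and expanding $y^{\alpha}$ in a binomial series about $2\pi k$. Since $y\mapsto y^{\alpha}p(y)$ lies in $L^{1}([1,+\infty))$ for $\Re\alpha<-1$, the decomposition
\[
D(\alpha)=\int_{1}^{2\pi}y^{\alpha}p(y)\,dy+\sum_{k=1}^{+\infty}\int_{2\pi k}^{2\pi(k+1)}y^{\alpha}p(y)\,dy= I^{\alpha}+\sum_{k=1}^{+\infty}J_{k}(\alpha)
\]
is justified by countable additivity (the tail sum of absolute values equals $\int_{2\pi}^{+\infty}|y^{\alpha}p(y)|\,dy<+\infty$), where $I^{\alpha}$ is as in \eqref{Ialpha} and $J_{k}(\alpha)\equiv\int_{2\pi k}^{2\pi(k+1)}y^{\alpha}p(y)\,dy$. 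The value of $I^{\alpha}$ evaluated in \eqref{Ialpha-2} already produces the three bracketed terms in the asserted identity, so it remains only to show $\sum_{k\ge1}J_{k}(\alpha)=\widetilde A(\alpha)$.

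For a single cell, the substitution $y=2\pi k+t$, $t\in[0,2\pi)$, together with $p(2\pi k+t)=\tfrac{\pi^{2}}{6}-\tfrac{\pi}{2}t+\tfrac14 t^{2}$ from \eqref{e.2.1}, gives
\[
J_{k}(\alpha)=(2\pi k)^{\alpha}\int_{0}^{2\pi}\Bigl(1+\tfrac{t}{2\pi k}\Bigr)^{\alpha}\Bigl(\tfrac{\pi^{2}}{6}-\tfrac{\pi}{2}t+\tfrac14 t^{2}\Bigr)\,dt .
\]
I would then insert the binomial expansion $\bigl(1+\tfrac{t}{2\pi k}\bigr)^{\alpha}=\sum_{l\ge0}\binom{\alpha}{l}(2\pi k)^{-l}t^{l}$ and integrate term by term, using $\int_{0}^{2\pi}t^{l+j}\,dt=(2\pi)^{l+j+1}/(l+j+1)$ and the cancellation $(2\pi)^{l+j+1}/(2\pi k)^{l}=(2\pi)^{j+1}/k^{l}$ for $j=0,1,2$. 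Collecting the contributions of the three monomials $\tfrac{\pi^{2}}{6}$, $-\tfrac{\pi}{2}t$, $\tfrac14 t^{2}$ reproduces exactly the brace $\bigl\{\tfrac{\pi^{2}}{6}(2\pi)\sum_{l}\binom{\alpha}{l}\tfrac{1}{l+1}\tfrac{1}{k^{l}}-\tfrac{\pi}{2}(2\pi)^{2}\sum_{l}\binom{\alpha}{l}\tfrac{1}{l+2}\tfrac{1}{k^{l}}+\tfrac14(2\pi)^{3}\sum_{l}\binom{\alpha}{l}\tfrac{1}{l+3}\tfrac{1}{k^{l}}\bigr\}$ in the definition of $\widetilde A(\alpha)$; summing over $k$ then finishes the proof, and the stated formula with the three closed-form bracketed terms plus $\widetilde A(\alpha)$ follows.

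The step I expect to be the main obstacle is making the term-by-term integration rigorous on the first cell $k=1$. For $k\ge2$ one has $|t/(2\pi k)|\le 1/k\le 1/2$ on $[0,2\pi]$, so $\sum_{l}\bigl|\binom{\alpha}{l}(2\pi k)^{-l}t^{l}\bigr|$ is dominated by a finite constant (the binomial coefficients grow only polynomially), the interchange of sum and integral is immediate, and the inner series are absolutely convergent. For $k=1$, however, $t/(2\pi)$ reaches the value $1$, i.e.\ the boundary of the disc of convergence of the binomial series, and for $\Re\alpha\le-2$ the series $\sum_{l}\binom{\alpha}{l}\tfrac{1}{l+j}$ does not even have terms tending to $0$; in that regime I would integrate first over $[0,2\pi r]$ with $r<1$ (where convergence is uniform), then let $r\uparrow1$, using dominated convergence on the integral side and Abel's theorem on the power-series side, so that the $k=1$ summand of $\widetilde A(\alpha)$ has to be read in this Abel-summed sense (equivalently, via the closed form of $\sum_{l}\binom{\alpha}{l}\tfrac{1}{l+j}k^{-l}$ obtained from $\int_{0}^{1/k}w^{j-1}(1+w)^{\alpha}\,dw$). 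For $-2<\Re\alpha<-1$ the $k=1$ series still converges (conditionally), so no such subtlety arises. Apart from this convergence bookkeeping the argument is routine, which is presumably why the appendix only sketches it.
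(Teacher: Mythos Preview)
Your argument is the same as the paper's: split $[1,+\infty)$ at the points $2\pi k$, identify $I^{\alpha}$ with the three bracketed terms via \eqref{Ialpha-2}, shift each remaining cell to $[0,2\pi)$ by $y=2\pi k+t$, insert the binomial expansion of $(1+t/(2\pi k))^{\alpha}$, and integrate term by term against the quadratic polynomial $p$. The paper's appendix does exactly this, invoking ``the binomial series expansion and Lebesgue's dominated convergence'' without further comment (the appendix is explicitly announced as a sketch).

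Your discussion of the $k=1$ cell in fact goes \emph{beyond} the paper's sketch. The paper asserts in passing that ``the sums converge absolutely'', but your observation is correct: since $\bigl|\binom{\alpha}{l}\bigr|\sim c\,l^{-\Re\alpha-1}$, the inner series $\sum_{l}\binom{\alpha}{l}\tfrac{1}{l+j}k^{-l}$ at $k=1$ is not absolutely convergent once $\Re\alpha<-1$, and its terms do not even tend to zero when $\Re\alpha\le-2$. Your remedy---integrate over $[0,2\pi r]$ and let $r\uparrow1$, reading the $k=1$ brace as an Abel limit, or equivalently through the closed form $\int_{0}^{1}w^{j-1}(1+w)^{\alpha}\,dw$---is the right way to make this step honest and is consistent with how the paper later (in Lemma~\ref{lem6.3}) handles the analogous series by working on $[\eta,1-\eta]$ and passing to the limit. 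So: same route as the paper, with the convergence bookkeeping at $k=1$ spelled out more carefully than the paper itself does.
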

\begin{proof}
Splitting the integration domain $[1,+\infty)$ in \eqref{e.2.7} into
\[
[1,2\pi)\cup[2\pi,+\infty)=[1,2\pi)\cup\bigcup_{k=1}^{+\infty}[2\pi k,2\pi (k+1))
\]
we get
\begin{align}
\label{s33.2}
\int_1^{+\infty}y^{\wa}p(y)dy
=\int_1^{2\pi}y^{\wa}p(y)dy+\widetilde{II}^{\alpha}, \text{ with }
\widetilde{II}^{\alpha}\equiv\sum_{k=1}^{+\infty}\int_{2\pi k}^{2\pi(k+1)}y^{\wa}p(y)dy.
\end{align}
The series is absolutely convergent, due to the fact that $p(\cdot)$ is uniformly bounded and $\Re\alpha<-1$. The first integral at the right hand side has been computed in \eqref{Ialpha-2}.

Set
\begin{align}\label{e.6.2c}
\widetilde{II}^{\alpha}\equiv\sum_{k=1}^{+\infty}\int_{2\pi k}^{2\pi(k+1)}y^{\wa}p(y)dy.
\end{align}
By the change of variables $y\to y'=y-2\pi k$ and using the periodicity of $p$ we get
\begin{align}\label{g33.6.3}
\widetilde{II}^{\alpha}=\sum_{k=1}^{+\infty}\widetilde{II}^{\alpha}_k,
\end{align}
with
\[
\widetilde{II}_k^{\alpha}\equiv \int_0^{2\pi}(y+2\pi k)^{\wa}p(y)dy
=(2\pi k)^{\wa}\int_0^{2\pi}\Big(1+\frac{y}{2\pi k}\Big)^{\wa}p(y)dy,\ \
k\in\N.
\]
Using the binomial series expansion and 
 Lebesgue's dominated convergence, we get

\begin{align}\label{e.2.15a}\begin{split}
\widetilde{II}_k^{\alpha}=(2\pi k)^{\wa}
\sum_{l=0}^{+\infty}{\wa\choose l}\frac1{(2\pi k)^l}\int_0^{2\pi}y^lp(y)dy.
\end{split}\end{align}
Performing the integrals, using the definition \eqref{e.2.1} of $p(y)$ in $[0,2\pi)$, we 
get easily

\begin{align}\begin{split}\label{e.6.7}
\widetilde{II}^{\alpha}=\sum_{k=1}^{\infty}(2\pi k)^{\wa}
\left\{
\frac{\pi^2}{6}\cdot2\pi\sum_{l=0}^{\infty}{\wa\choose l}\frac1{l+1}\frac1{k^l}
-\frac{\pi}{2}(2\pi)^2\sum_{l=0}^{\infty}{\wa\choose l}\frac1{l+2}\frac1{k^l}\right.\\
\left.+\frac14(2\pi)^3\sum_{l=0}^{\infty}{\wa\choose l}\frac1{l+3}\frac1{k^l}
\right\}=\widetilde{A}(\alpha)
\end{split}\end{align}
(where the sums converge absolutely and we used the definition of $\widetilde{A}(\alpha)$
in Lemma \ref{l.2.1}).
Lemma \ref{l.2.1} follows then, introducing \eqref{Ialpha-2} resp. \eqref{e.6.7} into \eqref{s33.2}.
\end{proof}

\begin{lem}\label{prop6.2}
Let $\Real\alpha<-1$, then with the notation in Lemma \ref{l.2.1}:
\[
 \widetilde{A}(\alpha)=2^{\wa}\frac{\pi^{3+\wa}}{3}\sum_{l=0}^{+\infty}\frac1{l+1}
{\wa \choose l}\zeta(l-\wa)- 2^{1+\wa}\pi^{3+\wa}\sum_{l=0}^{+\infty}\frac1{l+2}
{\wa \choose l}\zeta(l-\wa)+ 2^{1+\wa}\pi^{3+\wa}\sum_{l=0}^{+\infty}\frac1{l+3}
{\wa \choose l}\zeta(l-\wa).
\]
\end{lem}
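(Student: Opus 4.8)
The plan is to take the double-series formula for $\widetilde{A}(\alpha)$ furnished by Lemma \ref{l.2.1} and interchange the two summations. Writing $(2\pi k)^{\alpha}k^{-l}=(2\pi)^{\alpha}k^{-(l-\alpha)}$ and bringing the $k$-sum inside, one uses that for every $l\geq0$
\[
\sum_{k=1}^{\infty}(2\pi k)^{\alpha}\frac{1}{k^{l}}=(2\pi)^{\alpha}\sum_{k=1}^{\infty}k^{-(l-\alpha)}=(2\pi)^{\alpha}\zeta(l-\alpha),
\]
a convergent series because $\Real(l-\alpha)=l-\Real\alpha\geq-\Real\alpha>1$ when $\Real\alpha<-1$. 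The three pieces of $\widetilde{A}(\alpha)$ then become $\sum_{l}\binom{\alpha}{l}\frac{1}{l+j}\zeta(l-\alpha)$, $j=1,2,3$, each multiplied by $(2\pi)^{\alpha}$ times, respectively, $\frac{\pi^{2}}{6}(2\pi)$, $-\frac{\pi}{2}(2\pi)^{2}$, $\frac14(2\pi)^{3}$; collecting powers of $2$ and $\pi$ turns these prefactors into $\frac{2^{\alpha}\pi^{3+\alpha}}{3}$, $-2^{1+\alpha}\pi^{3+\alpha}$, $2^{1+\alpha}\pi^{3+\alpha}$, which is exactly the right-hand side of Lemma \ref{prop6.2}. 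Everything except the legitimacy of the interchange is this arithmetic.

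The main obstacle is precisely that interchange, since the double series is not absolutely convergent for $\Real\alpha<-1$: one has $\bigl|\binom{\alpha}{l}\bigr|\sim l^{-\Real\alpha-1}/|\Gamma(-\alpha)|$, and summing absolute values first in $k$ and then in $l$ yields something comparable to $\sum_{l}l^{-\Real\alpha-2}$, which diverges as soon as $\Real\alpha\leq-1$. So I would avoid Fubini and instead split $\widetilde{A}(\alpha)=\widetilde{II}^{\alpha}_{1}+\sum_{k\geq2}\widetilde{II}^{\alpha}_{k}$, where $\widetilde{II}^{\alpha}_{k}=\int_{0}^{2\pi}(y+2\pi k)^{\alpha}p(y)\,dy$ as in the proof of Lemma \ref{l.2.1}. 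For $k\geq2$ the binomial expansion of $\bigl(1+\tfrac{y}{2\pi k}\bigr)^{\alpha}$ converges geometrically and uniformly in $y\in[0,2\pi]$, so $\sum_{k\geq2}\widetilde{II}^{\alpha}_{k}$ is absolutely convergent and the interchange there is licensed, producing $\sum_{l}\binom{\alpha}{l}\frac{1}{l+j}\bigl(\zeta(l-\alpha)-1\bigr)$ times the prefactors above. The remaining term $\widetilde{II}^{\alpha}_{1}=(2\pi)^{\alpha}\int_{0}^{2\pi}\bigl(1+\tfrac{y}{2\pi}\bigr)^{\alpha}p(y)\,dy$ supplies the missing constant term, turning $\zeta(l-\alpha)-1$ into $\zeta(l-\alpha)$: integrating $\bigl(1+\tfrac{ry}{2\pi}\bigr)^{\alpha}=\sum_{l}\binom{\alpha}{l}(\tfrac{ry}{2\pi})^{l}$ for $0<r<1$ against $p(y)$ (now a uniformly convergent series) and letting $r\to1^{-}$ identifies $\widetilde{II}^{\alpha}_{1}$ with $(2\pi)^{\alpha}$ times the Abel sum of $\sum_{l}\binom{\alpha}{l}\frac{1}{l+j}$. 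Adding the two contributions gives $(2\pi)^{\alpha}\sum_{l}\binom{\alpha}{l}\frac{1}{l+j}\zeta(l-\alpha)$. Equivalently, one may establish the identity for a doubly truncated sum (finitely many $k$ and $l$, where the interchange is trivial) and pass to the limit, controlling the $k$-tail as above and the $l$-tail by Abel summation.

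With the interchange justified, the three series $\sum_{l}\binom{\alpha}{l}\zeta(l-\alpha)/(l+j)$ converge (conditionally, or in Abel's sense; genuinely, i.e. in the ordinary sense, for $-2<\Real\alpha<-1$), and the arithmetic $(2\pi)^{\alpha}\cdot\frac{\pi^{2}}{6}\cdot2\pi=\frac{2^{\alpha}\pi^{3+\alpha}}{3}$, $(2\pi)^{\alpha}\cdot\frac{\pi}{2}\cdot(2\pi)^{2}=2^{1+\alpha}\pi^{3+\alpha}$ and $(2\pi)^{\alpha}\cdot\frac14\cdot(2\pi)^{3}=2^{1+\alpha}\pi^{3+\alpha}$ completes the proof. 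I would also note, looking ahead to Corollary \ref{cor.6.5}, that each of these series can then be put in closed form by exchanging $\sum_{l}$ with the Dirichlet series for $\zeta(l-\alpha)$ and using $\sum_{l}\binom{\alpha}{l}\frac{x^{l}}{l+j}=x^{-j}\int_{0}^{x}t^{j-1}(1+t)^{\alpha}\,dt$.
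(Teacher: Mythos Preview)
Your approach is exactly the paper's: swap the $k$- and $l$-sums in the formula of Lemma~\ref{l.2.1} and recognise $\sum_{k\geq1}k^{-(l-\alpha)}=\zeta(l-\alpha)$, then tidy the constants $(2\pi)^{\alpha}\cdot\tfrac{\pi^{2}}{6}\cdot2\pi=\tfrac{2^{\alpha}\pi^{3+\alpha}}{3}$, etc. The paper's own proof is a single sentence doing precisely this, noting only that $\Real(l-\alpha)>1$ so each $k$-series converges; it does not address the legitimacy of the interchange (the appendix explicitly says details are omitted). Your observation that the double series fails to be absolutely summable---because the $k=1$ contribution $\sum_{l}\binom{\alpha}{l}\tfrac{1}{l+j}$ has terms of order $l^{-\Real\alpha-2}$---is correct and is a point the paper glosses over. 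Your repair (handle $k\geq2$ by absolute convergence, and recover $\widetilde{II}^{\alpha}_{1}$ via the Abel limit of $\int_{0}^{2\pi}(1+\tfrac{ry}{2\pi})^{\alpha}p(y)\,dy$) is sound and in fact more careful than the paper here.

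One caveat worth recording: for the statement of Lemma~\ref{prop6.2} to make literal sense the three $l$-series must converge, and as you yourself note this is only clear in the ordinary sense for $-2<\Real\alpha<-1$; for $\Real\alpha\leq-2$ the terms $\binom{\alpha}{l}\tfrac{1}{l+j}$ need not even tend to zero, so the series have to be read in the Abel/regularised sense. The paper does not flag this either, but effectively compensates in the proof of the next lemma (Lemma~\ref{lem6.3}), where it replaces the bare $l$-series by $\int_{\eta}^{1-\eta}\zeta(-\alpha,1+t)\,t^{j-1}\,dt$ and passes to the limit $\eta\downarrow0$---essentially the same Abel regularisation you propose.
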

\begin{proof}
This follows from the expression for $\widetilde{A}(\alpha)$ 
in Lemma \ref{l.2.1} and the power series definition of $\zeta(z)$ with $ z=l-\alpha$, observing
that:
$\Real (l-\wa)\geqslant l-\Real\alpha>l+1\geqslant1$, $l\in\N_0$, so that the series over $k$ converge 
 absolutely to $\zeta(l-\alpha)$.
\end{proof}

\begin{lem}\label{lem6.3}
For $j=1,2,3$, $\Real\alpha<-2$:
\[
\sum_{l=0}^{\infty}\frac1{l+j}{\wa\choose l}\zeta(l-\wa)
=\lim_{\eta\downarrow0}\int_{\eta}^{1-\eta}\zeta(-\wa,1+t)t^{j-1}dt
=-\frac1{\wa+j}+\widetilde{A}_j(\wa),
\]
where
$\zeta(\rho,a)$ is the generalized Riemann zeta function, defined for
$a\notin-\N_0,\ \Real\rho>1$ by $\zeta(\rho,a)\equiv\sum_{n=0}^{\infty}\frac1{(n+a)^{\rho}}$ and
meromorphically continued to all $\rho\in\C$ (see, e.g., \cite[vol. 1, 24]{erd}, \cite{lau}),
and $\widetilde{A}_1(\wa)\equiv0, \widetilde{A}_2(\wa)\equiv\frac1{\wa+1}\zeta(-\wa-1), 
\widetilde{A}_3(\wa)\equiv\frac1{\wa+1}\zeta(-\wa-1)-\frac2{(\wa+1)(\wa+2)}\zeta(-\wa-2)$.
\end{lem}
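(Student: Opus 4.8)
\emph{Plan.} I would prove the two displayed equalities in turn: the first is a reordering of summation and integration, the second a term-by-term integration followed by telescoping sums and an Euler--Maclaurin estimate. For the first equality I would write $\frac1{l+j}=\int_0^1 t^{\,l+j-1}\,dt$ and insert the expansion
\[
\zeta(-\wa,1+t)=\sum_{n=1}^{+\infty}(n+t)^{\wa}
=\sum_{n=1}^{+\infty}n^{\wa}\Big(1+\tfrac tn\Big)^{\wa}
=\sum_{l=0}^{+\infty}{\wa\choose l}\,\zeta(l-\wa)\,t^{\,l},\qquad 0\le t<1,
\]
obtained from the binomial series for $(1+t/n)^{\wa}$ and interchanging the $n$- and $l$-summations (legitimate for $\Real\wa<-1$ and $t<1$, since $\sum_n n^{\Real\wa}\sum_l\big|{\wa\choose l}\big|(t/n)^{l}<+\infty$). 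On each interval $[\eta,1-\eta]$ the series converges uniformly, so term-by-term integration gives
\[
\int_{\eta}^{1-\eta}\zeta(-\wa,1+t)\,t^{\,j-1}\,dt
=\sum_{l=0}^{+\infty}{\wa\choose l}\,\zeta(l-\wa)\,\frac{(1-\eta)^{l+j}-\eta^{\,l+j}}{l+j},
\]
and letting $\eta\downarrow0$ yields the first equality. (For $\Real\wa<-2$ one has ${\wa\choose l}=O(l^{-\Real\wa-1})$, so the left-hand series is to be read in the Abel-regularized sense furnished by this limit.)

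For the second equality, note that $1+t\in[1,2]$, so $t\mapsto\zeta(-\wa,1+t)$ is bounded and continuous on $[0,1]$ and $\lim_{\eta\downarrow0}\int_{\eta}^{1-\eta}=\int_0^1$; moreover $\sum_n\int_0^1 t^{\,j-1}|(n+t)^{\wa}|\,dt\le\sum_n n^{\Real\wa}<+\infty$, so after the substitution $u=n+t$ one may integrate term by term:
\[
\int_0^1\zeta(-\wa,1+t)\,t^{\,j-1}\,dt=\sum_{n=1}^{+\infty}\int_n^{n+1}(u-n)^{j-1}u^{\wa}\,du .
\]
Expanding $(u-n)^{j-1}$ by the binomial theorem reduces each summand to a finite combination of $\int_n^{n+1}u^{\wa+k}\,du=\frac1{\wa+k+1}\big[(n+1)^{\wa+k+1}-n^{\wa+k+1}\big]$, $0\le k\le j-1$; truncating at $n=N-1$ and using the telescoping identities \eqref{sumk} and \eqref{sumk2} already proved for Lemma~\ref{lem-DNal}, together with \eqref{tit411} to rewrite the partial sums $\zeta_N(-\wa-2)=\sum_{n=1}^{N}n^{\wa+2}$, I would pass to the limit $N\to\infty$. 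For $j=1$ the sum telescopes at once to $-\frac1{\wa+1}$, giving $\widetilde A_1(\wa)=0$. For $j=2$ the two pieces contribute $-\frac1{\wa+2}$ (pure telescoping) and $-\frac1{\wa+1}\sum_n n[(n+1)^{\wa+1}-n^{\wa+1}]\to\frac1{\wa+1}\zeta(-\wa-1)$, whence $\widetilde A_2(\wa)=\frac1{\wa+1}\zeta(-\wa-1)$.

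The main obstacle is the case $j=3$: the three telescoped pieces separately contain the divergent quantities $N^{\wa+3}$ and $\zeta_N(-\wa-2)$ (the latter genuinely divergent for $-3<\Real\wa<-2$). After inserting \eqref{tit411} for $\zeta_N(-\wa-2)$, one must verify that all the $N^{\wa+3}$-contributions combine with the common coefficient $\frac1{\wa+3}-\frac2{\wa+2}+\frac1{\wa+1}=\frac2{(\wa+1)(\wa+2)(\wa+3)}$ in such a way that they cancel identically, the residual boundary terms being $O(N^{\wa+2})\to0$. Collecting the finite remainder, the $\zeta$-free part simplifies by partial fractions to $-\frac1{\wa+3}$ and the $\zeta$-part to $\frac1{\wa+1}\zeta(-\wa-1)+2\big(\tfrac1{\wa+2}-\tfrac1{\wa+1}\big)\zeta(-\wa-2)=\frac1{\wa+1}\zeta(-\wa-1)-\frac2{(\wa+1)(\wa+2)}\zeta(-\wa-2)=\widetilde A_3(\wa)$. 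I expect this cancellation of the divergent terms — rather than any of the interchanges above — to be the crux of the argument; everything else is the same bookkeeping with telescoping sums carried out in the proof of Lemma~\ref{lem-DNal}.
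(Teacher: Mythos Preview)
Your argument is correct, and for the first equality and the cases $j=1,2$ of the second it is essentially the paper's proof: both derive the power-series identity $\zeta(-\wa,1+t)=\sum_{l\ge0}{\wa\choose l}\zeta(l-\wa)\,t^{l}$ (the paper quotes it from tables, you obtain it directly from the binomial expansion of $(1+t/n)^{\wa}$), integrate termwise on $[\eta,1-\eta]$, and then evaluate $\sum_{n\ge1}\int_0^1(n+t)^{\wa}t^{\,j-1}\,dt$ via the substitution $u=n+t$ and telescoping. Your parenthetical remark that the $l$-series must be read in the Abel sense is in fact sharper than the paper's justification, which glosses over the fact that the terms $\frac{1}{l+j}{\wa\choose l}$ do not tend to zero for $\Real\wa<-2$.

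The one genuine difference is the handling of $j=3$. The paper sidesteps the divergence of the separately telescoped pieces by first carrying out the whole computation for an auxiliary parameter $\beta$ with $\Real\beta<-3$, where every fragment converges absolutely, and then invoking analytic continuation in $\beta$ back to $\Real\wa<-2$. You instead stay at $\Real\wa<-2$, truncate at $N$, insert \eqref{tit411} for $\zeta_N(-\wa-2)$, and verify that the $N^{\wa+3}$-contributions cancel: the telescoping produces the coefficient $\tfrac1{\wa+3}-\tfrac2{\wa+2}+\tfrac1{\wa+1}=\tfrac{2}{(\wa+1)(\wa+2)(\wa+3)}$, which is exactly matched by $\big(\tfrac{2}{\wa+2}-\tfrac{2}{\wa+1}\big)\cdot\tfrac{1}{\wa+3}$ coming from the $\tfrac{N^{\wa+3}}{\wa+3}$ term of \eqref{tit411}. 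This is the same bookkeeping as in Lemma~\ref{lem-DNal}, so your route keeps the appendix methodologically uniform with the main text and avoids importing analytic continuation; the paper's route is shorter but less self-contained.
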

\begin{proof}
From \cite[54.12.2, p. 359]{han}  or \cite[pp. 281, 286]{srich} :
\begin{align}\label{sn1}
\sum_{j=0}^{\infty}\frac{(\rho)_j}{j!}\zeta(\rho+j,a)t^j=\zeta(\rho,a-t),
\end{align}
for $|t|<|a|,\ \rho,\wa\in \C,\ a-t\notin-\N_0$.
Here $(\rho)_j\equiv\rho(\rho+1)\ldots(\rho+j-1)={\Gamma(\rho+j)}/{\Gamma(\rho)}$
is Pochhammer's symbol, $j\in\N,\ (\rho)_0\equiv1$.
From, e.g., \cite[p. 4]{Ma06} we have, for any $l\in\N$:
\[
{b\choose l}=(-1)^l\frac{\Gamma(l-b)}{l!\Gamma(-b)},\ b\in\C\setminus\N_0.
\]
From this and the definition of $(\rho)_j$ with $\rho=-b,\ j=l$, we have,
using $(-b)_l=\frac{\Gamma(-b+l)}{\Gamma(-b)}$:
\begin{align}\label{e.7.7.b}
{b\choose l}=(-1)^l\frac{\Gamma(l-b)}{l!\Gamma(-b)}
=\frac{(-1)^l}{l!}(-b)_l.
\end{align}
From this we get
\[
{b\choose l}\zeta(l-b)=\frac{(-1)^l}{l!}(-b)_l\zeta(l-b),
\]
hence
\begin{align}\label{n.2.1}
(-b)_l\zeta(l-b)=\frac{l!}{(-1)^l}{b\choose l}\zeta(l-b).
\end{align}
Using \eqref{sn1} for
$\rho=-\wa,\ j=l,\ a=1$ and $0<|t|<1$, we have on the other hand:
\begin{align}
\label{n.2.2}
\sum_{l=0}^{+\infty}\frac{(-\wa)_l}{l!}\zeta(l-\wa)t^l=\zeta(-\wa,1-t)
\end{align}
(where we used that $\zeta(l-\wa,1)=\sum_{n=0}^{+\infty}\frac1{(n+1)^{l-\wa}}
=\sum_{n'=1}^{+\infty}\frac1{(n')^{l-\wa}}=\zeta(l-\wa)$, where we set $n'=n+1$, 
 minding that $\Real(l-\alpha)>1$
since $l-\Real\alpha>1$).
From \eqref{n.2.1} (with $b=\wa$) we have
$(-\wa)_l\zeta(l-\wa)=\frac{l!}{(-1)^l}{\wa\choose l}\zeta(l-\wa)$
and we get
\[
\sum_{l=0}^{+\infty}\frac{(-\wa)_l}{l!}\zeta(l-\wa)t^l
=\sum_{l=0}^{+\infty}\frac1{l!}\frac{l!}{(-1)^l}{\wa\choose l}\zeta(l-\wa)t^l.
\]
Minding that the left hand side is the same as the left hand side of \eqref{n.2.2}
we get
\[
\sum_{l=0}^{+\infty}{\wa\choose l}\zeta(l-\wa)(-t)^l
=\zeta(-\wa,1-t).
\]
Replacing $t$ by $-t$ (which also satisfies $0<|t|<1$), and multiplying by $t^{j-1},\ j=1,2,3$, we get
\begin{align}\label{e.7.9.a}
\sum_{l=0}^{+\infty}{\wa\choose l}\zeta(l-\wa)t^{l+j-1}
=\zeta(-\wa,1+t)t^{j-1}.
\end{align}
Integrating with respect to $t$ on $[\eta,1-\eta],\ \frac12>\eta>0$ we get
\begin{align}\label{st.40}
\int_{\eta}^{1-\eta}\sum_{l=0}^{+\infty}{\wa\choose l}\zeta(l-\wa)t^{l+j-1}dt
=\int_{\eta}^{1-\eta}\zeta(-\wa,1+t)t^{j-1}dt.
\end{align}

We shall now write $\zeta(l-\wa)=[\zeta(l-\wa)-1+1]$
and insert this into the left hand side of \eqref{st.40}
\begin{align}\label{st.41}
\int_{\eta}^{1-\eta}\sum_{l=0}^{+\infty}{\wa \choose l}[\zeta(l-\wa)-1]t^{l+j-1}dt
+\int_{\eta}^{1-\eta}\sum_{l=0}^{+\infty}{\wa \choose l}t^{l+j-1}dt.
\end{align}

Using that from \eqref{n.2.1} with $b=\wa$ we have 
$|(-\wa)_l\zeta(l-\wa)|=l!\left|{\wa\choose l}\right|\left|\zeta(l-\wa)\right|$,
we get easily for the $N$-th approximation of the integrand of the first term in \eqref{st.41}: 
\begin{align}\begin{split}\label{n.3.b}
&\left|\sum_{l=0}^N{\wa\choose l}(\zeta(l-\wa)-1)\right|
\leqslant\sum_{l=0}^{N}\frac1{l!}|(-\wa)_l||\zeta(l-\wa)-1|\\
&\leqslant\sum_{l=0}^{\infty}\frac1{l!}\sqrt{(\Real\alpha)^2+(\im\alpha)^2}
\ldots\sqrt{(\Real (\alpha+(l-1)))^2+(\im\alpha)^2}
\zeta(l-\Real\alpha),
\end{split}\end{align}
where we used the definition of $(-\wa)_l$ and
$|(-\wa+q)|=\sqrt{(\Real\alpha+q)^2+(\im\alpha)^2}, q=0,\ldots,l-1$,
together with:
\[
|\zeta(l-\wa)-1|\leqslant
\sum_{n=2}^{\infty}\Big|\frac1{n^{\Real\alpha+l}}\Big|
=\sum_{n=2}^{\infty}\frac1{n^{\Real\alpha}}
\leqslant[\zeta(l-\Real\alpha)-1],\ \ l\in\N_0
\]
(because $|n^{l-\alpha}|=n^{l-\Real\alpha}$, since
$|n^{-iv}|=|e^{-iv\ln n}|=1$, $n\in\N$, $v\in\R$ and
the series being absolutely convergent, since $l-\Real\alpha>2$, for all $l\in\N_0$).
But
\[
\zeta(l-\Real\alpha)-1\leqslant\frac{l+1-\Real\alpha}{l-1-\Real\alpha}\frac1{2^{l-\Real\alpha}}
\]
because $\zeta(\sigma)\leqslant1+\frac{\sigma+1}{\sigma-1}\frac1{2^\sigma}$, for $\sigma>1$
(see, e.g., \cite[Ex. 2, p. 48]{jam}).
Hence we get from \eqref{n.3.b} the following bound:
\[
\bigg|\sum_{l=0}^{N}{\wa \choose l}(\zeta(l-\wa)-1)\bigg|
\leqslant\sum_{l=0}^{\infty}\frac1{l!}\big(\sqrt{[(\Real\alpha)+(l-1)]^2+(\im\alpha)^2}\big)^l
\frac{l+1-\Real\alpha}{l-1-\Real\alpha}\frac1{2^{l-\Real\alpha}}.
\]
The series converges absolutely, as seen from Stirling's formulae.
The bound \eqref{n.3.b} is then finite, independent of $N$. Using Lebesgue's dominated convergence
theorem it is 
then not difficult to
 prove that
\begin{align}\label{bo.41}
\sum_{l=0}^{+\infty}{\wa\choose l}\zeta(l-\wa)\frac1{l+j}[(1-\eta)^{l+j}-\eta^{l+j}]
=\int_{\eta}^{1-\eta}\zeta(-\wa,1+t)t^{j-1}dt,\ j=1,2,3.
\end{align}
The summand on the left hand side is bounded absolutely uniformly
in $\eta$ by $2$
(since $(1-\eta)^{l+i}\leqslant1$ and 
$\eta^{l+j}\leqslant1/{2^{l+j}}\leqslant1,\ l\in\N_0,\ j=1,2,3$).
By a discrete version of the Lebesgue dominated convergence theorem we can interchange 
the limit $\eta\downarrow0$ with the summation,
getting that the limit for $\eta\downarrow0$ of the left hand side of \eqref{bo.41}
is equal to 
$\sum_{l=0}^{+\infty}{\wa \choose l}\zeta(l-\wa)\frac1{l+j}$,
 which is the left hand side in the formula in Lemma \ref{lem6.3}.

On the right hand side of \eqref{bo.41} we have, using the definition
of $\zeta(-\wa,1+t)$ in Lemma \ref{lem6.3}:
\begin{align}
\label{n.4.m}
\int_{\eta}^{1-\eta}\zeta(-\wa,1+t)t^{j-1}dt
=\int_{\eta}^{1-\eta}\sum_{k=0}^{\infty}(k+(1+t))^{\wa}t^{j-1}dt.
\end{align}
It is not difficult to convince ourselves that one can interchange the sum and the integral obtaining
\begin{align}\label{e.7.16}
\lim_{\eta\downarrow0}\sum_{k=0}^{+\infty}\int_{\eta}^{1-\eta}(k+(1+t))^{\wa}dt
=-\frac{1}{\wa+1}.
\end{align}
Similarly, for $j=2$ we have as a result:
\beq
\label{e.7.22b}
\begin{aligned}
\sum_{k=0}^{+\infty}\int_{\eta}^{1-\eta}(k+(1+t))^{\wa}t\ dt
=&\sum_{k=0}^{+\infty}\int_{k+1+\eta}^{k+2-\eta}t'^{\wa}(t'-k-1)dt'\\
=&\sum_{k=0}^{+\infty}\int_{k+1+\eta}^{k+2-\eta}t'^{\wa+1}dt'
-\sum_{k=0}^{+\infty}(k+1)\int_{k+1+\eta}^{k+2-\eta}t'^{\wa}dt'.
\end{aligned}
\eeq
Using the above result for $j=1$ with $\wa$ replaced by $\wa+1$ we see that the first sum
on the right hand side converges for $\eta\downarrow0$ to $-1/(\wa+2)$.

As for the $N$-th approximation of the second sum on the right hand side of \eqref{e.7.22b} we have
\begin{align*}
-\sum_{k=0}^N(k+1)\int_{k+1+\eta}^{k+2-\eta}t'^{\wa}dt'
&=-\sum_{k=0}^N\frac{(k+1)}{\wa+1}[(k+2-\eta)^{\wa+1}-(k+1+\eta)^{\wa+1}]\\
&\stackrel{N\to+\infty}{\longrightarrow}
-\sum_{k=0}^{+\infty}\frac{(k+1)}{\wa+1}[(k+2-\eta)^{\wa+1}-(k+1+\eta)^{\wa+1}]\\
&\stackrel{\eta\downarrow0}{\longrightarrow}
-\sum_{k=0}^{+\infty}\frac{(k+1)}{\wa+1}[(k+2)^{\wa+1}-(k+1)^{\wa+1}]\\
&=\frac1{\wa+1}\sum_{k=1}^{\infty}k^{\wa+1}=\frac1{\wa+1}\zeta(-\wa-1),
\end{align*}
where we used $(\Real\alpha+1)<-1$. Hence
\begin{align}\label{e.7.17}
\sum_{k=0}^{N}\int_{\eta}^{1-\eta}(k+(1+t))^{\wa}t\ dt
\stackrel{N\to+\infty}{\longrightarrow}
\sum_{k=0}^{+\infty}\int_{\eta}^{1-\eta}(k+(1+t))^{\wa}t\ dt
\stackrel{\eta\downarrow0}{\longrightarrow}-\frac1{\wa+2}
+\frac1{\wa+1}\zeta(-\wa-1).
\end{align}
For $j=3$ one first considers 
\[
\rho_{N,\eta}(\betaw)\equiv\sum_{k=0}^{N}\int_{\eta}^{1-\eta}(k+(1+t))^{\betaw}t^2\ dt,
\]
for $\Real\betaw<-3$ 
(and then one replaces ``by analytic continuation'' $\betaw$ by $\wa$).
By careful considerations similar to these we did for $j=1,2$, we obtain
\[
\lim_{N\to\infty}\lim_{\eta\downarrow0}\rho_{N,\eta}(\beta)=
-\frac1{\beta+3}+\frac1{\beta+1}\zeta(-\beta-1)
-\frac2{(\beta+1)(\beta+2)}\zeta(-\beta-2).
\]
By analytic continuation one then obtains

\beq
\label{e.7.22}
\lim_{\eta\downarrow0}\sum_{k=0}^{\infty}\int_{\eta}^{1-\eta}(k+(1+t))^{\wa}t^2\ dt
=-\frac1{\wa+3}+\frac1{\wa+1}\zeta(-\wa-1)
-\frac2{(\wa+1)(\wa+2)}\zeta(-\wa-2).
\eeq
The completion of the proof is obtained from
\eqref{e.7.16}, \eqref{e.7.17}, \eqref{e.7.22}
combined with the fact that the limit of \eqref{e.7.9.a}
for $\eta\downarrow0$ is the left hand side in the expression in Lemma \ref{lem6.3} and observing that \eqref{n.2.2}
holds.
\end{proof}

\begin{cor}\label{cor.6.5}
$\widetilde{A}$ as defined in Lemma \ref{l.2.1} satisfies, for $\Real\alpha<-2$:
\begin{align*}
\widetilde{A}(\alpha)=2^{\wa}\pi^{3+\wa}\Big(-\frac13\frac1{\wa+1}+\frac2{\wa+2}-\frac2{\wa+3}\Big)
-\frac{(2\pi)^{3+\wa}}{2(\wa+1)(\wa+2)}\zeta(-\wa-2).
\end{align*}
\end{cor}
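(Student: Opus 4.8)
\textbf{Proof proposal for Corollary \ref{cor.6.5}.}

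The plan is to substitute the evaluations of the three series from Lemma \ref{lem6.3} directly into the expression for $\widetilde A(\alpha)$ provided by Lemma \ref{prop6.2}, and then collect terms. Concretely, write
\[
\widetilde A(\alpha)=2^{\alpha}\pi^{3+\alpha}\Big(\tfrac13 S_1-2 S_2+2 S_3\Big),\qquad
S_j\equiv\sum_{l=0}^{\infty}\frac1{l+j}\binom{\alpha}{l}\zeta(l-\alpha),
\]
which is exactly the content of Lemma \ref{prop6.2} (after pulling out the common factor $2^{\alpha}\pi^{3+\alpha}$ and noting $2^{1+\alpha}=2\cdot 2^{\alpha}$). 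Since we are assuming $\Re\alpha<-2$, Lemma \ref{lem6.3} applies for $j=1,2,3$ and gives $S_1=-\tfrac1{\alpha+1}$, $S_2=-\tfrac1{\alpha+2}+\tfrac1{\alpha+1}\zeta(-\alpha-1)$, and $S_3=-\tfrac1{\alpha+3}+\tfrac1{\alpha+1}\zeta(-\alpha-1)-\tfrac2{(\alpha+1)(\alpha+2)}\zeta(-\alpha-2)$.

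First I would plug these three expressions into $\tfrac13 S_1-2S_2+2S_3$. The key observation is that the $\zeta(-\alpha-1)$ contributions come with coefficients $-2\cdot\tfrac1{\alpha+1}$ (from $-2S_2$) and $+2\cdot\tfrac1{\alpha+1}$ (from $+2S_3$), so they cancel identically. What remains is the purely rational combination $-\tfrac1{3(\alpha+1)}+\tfrac2{\alpha+2}-\tfrac2{\alpha+3}$ together with the single surviving $\zeta$-term $-\tfrac4{(\alpha+1)(\alpha+2)}\zeta(-\alpha-2)$ coming from $2S_3$. Thus
\[
\widetilde A(\alpha)=2^{\alpha}\pi^{3+\alpha}\Big(-\tfrac13\tfrac1{\alpha+1}+\tfrac2{\alpha+2}-\tfrac2{\alpha+3}\Big)
-4\cdot 2^{\alpha}\pi^{3+\alpha}\,\frac{\zeta(-\alpha-2)}{2(\alpha+1)(\alpha+2)}.
\]
Finally I would tidy the constant in the last term: $4\cdot 2^{\alpha}\pi^{3+\alpha}=2^{\alpha+2}\pi^{\alpha+3}=(2\pi)^{\alpha+3}$, which turns the second summand into $-\dfrac{(2\pi)^{3+\alpha}}{2(\alpha+1)(\alpha+2)}\zeta(-\alpha-2)$, matching the claimed formula.

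There is no real obstacle here; the argument is a bounded algebraic manipulation. The only points requiring a little care are (i) the cancellation of the $\zeta(-\alpha-1)$ terms, which should be stated explicitly since it is what makes the final formula so clean, and (ii) the power-of-two bookkeeping $4\cdot2^{\alpha}\pi^{3+\alpha}=(2\pi)^{3+\alpha}$, so that the result is displayed with the same normalization as Corollary \ref{cor-Dal}. One should also recall the hypothesis $\Re\alpha<-2$, inherited from Lemma \ref{lem6.3}, since it is exactly in this range that all the series invoked converge and the $\zeta$-values are finite; this is also the range in which, combined with Lemma \ref{l.2.1}, one recovers the formula for $D(\alpha)$ of Corollary \ref{cor-Dal} as a consistency check.
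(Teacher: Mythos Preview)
Your approach is exactly the paper's: the proof there reads in full ``This is immediate from Lemma \ref{prop6.2} and Lemma \ref{lem6.3}'', and you have simply written out the substitution and the cancellation of the $\zeta(-\alpha-1)$ terms that the paper leaves implicit.

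One arithmetical slip to fix: after identifying the surviving $\zeta$-term $-\tfrac{4}{(\alpha+1)(\alpha+2)}\zeta(-\alpha-2)$ inside $\tfrac13 S_1-2S_2+2S_3$, multiplying by the prefactor $2^{\alpha}\pi^{3+\alpha}$ gives
\[
-\,\frac{4\cdot 2^{\alpha}\pi^{3+\alpha}}{(\alpha+1)(\alpha+2)}\,\zeta(-\alpha-2),
\]
not the displayed expression with $2(\alpha+1)(\alpha+2)$ in the denominator. Correspondingly, your ``tidying'' identity $4\cdot 2^{\alpha}\pi^{3+\alpha}=(2\pi)^{\alpha+3}$ is off by a factor of $2$ (in fact $4\cdot 2^{\alpha}\pi^{3+\alpha}=2^{\alpha+2}\pi^{\alpha+3}=\tfrac12(2\pi)^{\alpha+3}$). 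These two slips cancel, so your final formula is correct; just rewrite the last step as
\[
-\,\frac{4\cdot 2^{\alpha}\pi^{3+\alpha}}{(\alpha+1)(\alpha+2)}\,\zeta(-\alpha-2)
=-\,\frac{2^{\alpha+2}\pi^{\alpha+3}}{(\alpha+1)(\alpha+2)}\,\zeta(-\alpha-2)
=-\,\frac{(2\pi)^{\alpha+3}}{2(\alpha+1)(\alpha+2)}\,\zeta(-\alpha-2).
\]
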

\begin{proof}
This is immediate from Lemma \ref{prop6.2} and Lemma \ref{lem6.3}.
\end{proof}

\begin{cor}\label{cor.6.5a}
Let $D(\alpha)$ be as in Lemma \ref{l.2.1}. Then for $\Real\alpha<-2$:
\[
\begin{aligned}
D(\alpha)=&
-\frac{\pi^2}{6}\frac1{\wa+1}+\frac\pi2\frac1{\wa+2}
-\frac14\frac1{\wa+3}
+
\frac{2^{\wa+1}}{2\cdot3}\frac1{\wa+1}\pi^{3+\wa}
-\frac{\pi^{3+\wa}}{2}\frac{2^{\wa+2}}{\wa+2}+\frac1{2^2}\frac1{\wa+3}
2^{\wa+3}\pi^{3+\wa}
\widetilde{A}(\alpha)\\
=&
-\frac{\pi^2}{6}\frac1{\wa+1}+\frac\pi2\frac1{\wa+2}-\frac14\frac1{\wa+3}
-\frac{(2\pi)^{3+\wa}}{2(\wa+1)(\wa+2)}\zeta(-\wa-2).
\end{aligned}
\]
\end{cor}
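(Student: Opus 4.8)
The plan is to substitute the formula for $\widetilde A$ supplied by Corollary \ref{cor.6.5} into the expression for $D(\alpha)$ coming from Lemma \ref{l.2.1}, and then to observe that all the terms proportional to $\pi^{3+\alpha}$ cancel, leaving exactly the asserted closed form.

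Concretely, I would first recall from Lemma \ref{l.2.1} that, for $\Real\alpha<-1$,
\[
D(\alpha)=\frac{\pi^2}{6}\frac1{\alpha+1}[(2\pi)^{\alpha+1}-1]
-\frac{\pi}{2}\frac1{\alpha+2}[(2\pi)^{\alpha+2}-1]
+\frac14\frac1{\alpha+3}[(2\pi)^{\alpha+3}-1]+\widetilde{A}(\alpha).
\]
Expanding $(2\pi)^{\alpha+j}=2^{\alpha+j}\pi^{\alpha+j}$ for $j=1,2,3$ and using $\pi^{\alpha+1}\pi^{2}=\pi^{\alpha+2}\pi=\pi^{\alpha+3}=\pi^{3+\alpha}$, the ``$-1$'' pieces produce $-\frac{\pi^2}{6}\frac1{\alpha+1}+\frac{\pi}{2}\frac1{\alpha+2}-\frac14\frac1{\alpha+3}$, while the remaining part becomes $\frac{2^{\alpha+1}}{2\cdot3}\frac{\pi^{3+\alpha}}{\alpha+1}-\frac{2^{\alpha+2}}{2}\frac{\pi^{3+\alpha}}{\alpha+2}+\frac{2^{\alpha+3}}{2^{2}}\frac{\pi^{3+\alpha}}{\alpha+3}$ after pulling out the common factor $\pi^{3+\alpha}$. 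This is precisely the first line displayed in the statement (apart from the evident missing ``$+$'' in front of $\widetilde A(\alpha)$).

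Next I would insert, for $\Real\alpha<-2$, the formula of Corollary \ref{cor.6.5},
\[
\widetilde{A}(\alpha)=2^{\alpha}\pi^{3+\alpha}\Bigl(-\tfrac13\tfrac1{\alpha+1}+\tfrac2{\alpha+2}-\tfrac2{\alpha+3}\Bigr)-\frac{(2\pi)^{3+\alpha}}{2(\alpha+1)(\alpha+2)}\zeta(-\alpha-2).
\]
The crucial (and entirely elementary) point is the coefficient matching $\frac{2^{\alpha+1}}{2\cdot3}=\frac{2^{\alpha}}{3}$, $\frac{2^{\alpha+2}}{2}=2\cdot2^{\alpha}$ and $\frac{2^{\alpha+3}}{2^{2}}=2\cdot2^{\alpha}$, which make the three $\pi^{3+\alpha}$-terms produced in the previous step cancel, term by term, against the three $\pi^{3+\alpha}$-terms inside $\widetilde A(\alpha)$. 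What remains is
\[
D(\alpha)=-\frac{\pi^2}{6}\frac1{\alpha+1}+\frac{\pi}{2}\frac1{\alpha+2}-\frac14\frac1{\alpha+3}-\frac{(2\pi)^{3+\alpha}}{2(\alpha+1)(\alpha+2)}\zeta(-\alpha-2),
\]
which is the second displayed line, i.e.\ the assertion.

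Since every manipulation is a formal identity valid for $\Real\alpha<-2$ (the range in which all the series entering $\widetilde A$ and $\zeta$ converge, as established in Lemmas \ref{prop6.2} and \ref{lem6.3}), there is no genuine obstacle here; the only thing requiring care is the bookkeeping of the powers of $2$ noted above, which is exactly the mechanism producing the cancellation. As a consistency check one notes that the resulting formula coincides with the one obtained by a completely different route in Corollary \ref{cor-Dal}, equivalently with the representation of $\zeta$ in Theorem \ref{t.3.1}.
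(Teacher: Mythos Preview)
Your proof is correct and follows exactly the route the paper takes: the paper's own proof says only that the result ``is immediate from Lemma \ref{l.2.1} and Corollary \ref{cor.6.5}, due to a compensation of terms,'' and you have simply spelled out that compensation in detail. Your observation about the missing ``$+$'' before $\widetilde A(\alpha)$ in the displayed first line is also correct.
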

This coincides with formula \eqref{eq-cor-Dal}, and hence yields a new proof  of Corollary \ref{cor-Dal}.
\begin{proof}
This is immediate from Lemma \ref{l.2.1} and Corollary \ref{cor.6.5},
due to a compensation of terms.
\end{proof}




\end{document}